\newtheorem{thm}{Theorem}[section]
\newtheorem{prop}[thm]{Proposition}
\newtheorem{cor}[thm]{Corollary}
\newtheorem{lem}[thm]{Lemma}
\newtheorem{rem}[thm]{Remark}
\numberwithin{equation}{section}
\def \vtk{V^{\otimes k}}
\def \bY{\bar{Y}_\sigma}
\def \D{\Delta_k}
\def \Z{\mathbb Z}
\def \C{\mathbb C}
\def \N{\mathbb N}
\def \wt{{\rm wt}}
\def \Res{{\rm Res}}
\def \End{{\rm End}}
\def \mod{{\rm mod}}
\def \o{\omega}
\def \l{\lambda}
\newcommand{\ZZ}[0]{{\Z}_+}
\newcommand{\Lx}[0]{x^{j + 1} \frac{\partial}{\partial x}}
\newcommand{\Lo}[0]{x \frac{\partial}{\partial
x}}
\begin{document}

\title[Even order cyclic-twisted tensor product VOSA-modules]{Permutation-twisted modules for even order cycles acting on tensor product vertex operator superalgebras}

%    Information for first author
\author{Katrina Barron}
\address{Department of Mathematics, University of Notre Dame, Notre Dame, IN 46556}
\email{kbarron@nd.edu}
%\thanks{}
\author{Nathan Vander Werf}
\address{Department of Mathematics, University of Notre Dame,
Notre Dame, IN 46556}
\email{nvanderw@nd.edu}

%    General info
\subjclass{Primary 17B68, 17B69, 17B81, 81R10, 81T40, 81T60}

\date{January 22, 2014}

\keywords{Vertex operator superalgebras, twisted sectors, permutation 
orbifold, superconformal field theory}

\begin{abstract}
We construct and classify $(1 \; 2 \; \cdots \; k)$-twisted $V^{\otimes k}$-modules for $k$ even and $V$ a vertex operator superalgebra.  In particular, we show that the category of weak $(1 \; 2 \; \cdots \; k)$-twisted $V^{\otimes k}$-modules for $k$ even is isomorphic to the category of weak parity-twisted $V$-modules.  This result  shows that in the case of a cyclic permutation of even order, the construction and classification of permutation-twisted modules for tensor product vertex operator superalgebras is fundamentally different than in the case of a cyclic permutation of odd order, as previously constructed and classified by the first author.  In particular, in the even order case it is the parity-twisted $V$-modules that play the significant role in place of the untwisted $V$-modules that play the significant role in the odd order case.  
\end{abstract}

\maketitle

\section{Introduction}

Let $V$ be a  vertex operator (super)algebra, and for a fixed positive integer $k$,  consider the tensor product vertex operator (super)algebra $\vtk$ (see \cite{FLM3}, \cite{FHL}).  Any 
element $g$ of the symmetric group $S_k$ acts on $\vtk$ as a vertex operator (super)algebra automorphism, and thus it is appropriate  to consider $g$-twisted $\vtk$-modules.  
In \cite{BDM}, the author along with Dong and Mason constructed and classified the $g$-twisted $V^{\otimes k}$-modules for $V$ a vertex operator algebra.    In \cite{B-superpermutation-odd}, the first author constructed and classified the $(1 \;2 \; \cdots \; k)$-twisted $V^{\otimes k}$-modules for $V$ a vertex operator superalgebra and $k$ odd.  In addition, in \cite{B-superpermutation-odd} and \cite{BV-fermion}, we showed that the construction and classification for the case of an even order permutation is  fundamentally different in the super case than that for odd order permutations, and conjectured that parity-twisted $V$-modules instead of untwisted $V$-modules were playing a central role.   

In the present paper, as our main result, we give an explicit construction and classification of $(1 \; 2\; \cdots \; k)$-twisted $V^{\otimes k}$-modules for $k$ even and $V$ any vertex operator superalgebra.  In particular, we show that for $k$ even, the category of weak $(1 \; 2\; \cdots \; k)$-twisted $V^{\otimes k}$-modules is isomorphic to the category of weak {\it parity-twisted} $V$-modules.   Here the parity map $\sigma$ on any $\mathbb{Z}_2$-graded vector space is the identity on the even subspace and $-1$ on the odd subspace.   

This result is in contrast to the results of \cite{BDM} for vertex operator algebras and the results of \cite{B-superpermutation-odd} for vertex operator superalgebras for when $k$ is odd;  in these cases it was shown that the category of weak $(1 \; 2\; \cdots \; k)$-twisted $V^{\otimes k}$-modules is isomorphic to the category of weak  {\it untwisted} $V$-modules.  Thus this class of examples we construct and classify in this paper (i.e., for the case when $k$ is even and $V$ is a vertex operator superalgebra) are of fundamental importance in understanding the role of the parity map and parity-twisted modules in the theory of vertex operator superalgebras.

The results of this paper give formulas for the graded dimensions of $(1 \; 2 \; \cdots \; k)$-twisted $V^{\otimes k}$-modules in terms of the graded dimensions of parity-twisted $V$-modules, and vice versa, as in Corollary \ref{graded-dimension-corollary}.

Next we give some background on the theory of twisted modules in general, followed by some further comments on implications of the results of this paper for supersymmetric theories and for lattice vertex operator superalgebras.  
Twisted vertex operators were discovered and used in \cite{LW}.   Twisted modules for vertex operator algebras arose in the work of I. Frenkel, J. Lepowsky and A. Meurman \cite{FLM1}, \cite{FLM2}, \cite{FLM3} for the case of a lattice vertex operator algebra and certain lifts of the lattice isometry $-1$, in the course of the construction of the moonshine module vertex operator algebra (see also \cite{Bo}). This structure came to be understood as an ``orbifold model" in the sense of conformal field theory and string theory.  Twisted modules are the mathematical counterpart of ``twisted sectors", which are the basic building blocks of orbifold models in conformal field theory and string theory (see \cite{DHVW1}, \cite{DHVW2}, \cite{DFMS}, \cite{DVVV}, \cite{DGM}, as well as \cite{KS}, \cite{FKS}, \cite{Ba1}, \cite{Ba2}, \cite{BHS}, \cite{dBHO}, \cite{HO}, \cite{GHHO}, \cite{Ba3} and \cite{HH}).  Orbifold theory plays an important role in conformal field theory and in supersymmetric generalizations, and is also a way of constructing a new vertex operator (super)algebra from a given one.  

Formal calculus arising {}from twisted vertex operators associated to an even lattice was systematically developed in \cite{Le1}, \cite{FLM2}, \cite{FLM3} and \cite{Le2}, and the twisted Jacobi identity was formulated and shown to hold for these operators (see also \cite{DL2}).  These results led to the introduction of the notion of $g$-twisted $V$-module \cite{FFR}, \cite{D}, for $V$ a vertex operator algebra and $g$ an automorphism of $V$.  This notion records the properties of twisted operators obtained in \cite{Le1}, \cite{FLM1}, \cite{FLM2}, \cite{FLM3} and \cite{Le2}, and provides an axiomatic definition of the notion of twisted sectors for conformal field theory.  In general, given a vertex operator algebra $V$ and an automorphism $g$ of $V$, it is an open problem as to how to construct a $g$-twisted $V$-module.

A theory of twisted operators for integral lattice vertex operator superalgebras and finite automorphisms that are lifts (in a certain way) of a lattice isometry were studied in \cite{DL2} and \cite{Xu}, and the general theory of twisted modules for vertex operator superalgebras was developed by Li in \cite{Li2}.  Certain specific examples of permutation-twisted sectors in superconformal field theory have been studied from a physical point of view in, for instance, \cite{FKS}, \cite{BHS}, \cite{Maio-Schellekens1}, \cite{Maio-Schellekens2}. 

Here we would like to point out implications for the current work applied to lattice vertex operator superalgebras.  If $V_K$ is a vertex operator superalgebra associated to a positive-definite integral lattice $K$, then $V^{\otimes k}_K$ is the vertex operator superalgebra $V_L$ associated to the lattice $L$ where $L$ is the orthogonal direct sum of $k$ copies of $K$.  In the case when the lattice $K$ is even, $V_K$ is a vertex operator algebra, and a $k$-cycle permutation of $V^{\otimes k}_K = V_L$ is a lift of a lattice isometry of $L$ of order $k$.  Thus one can use either the construction of \cite{BDM} or the construction of Lepowsky, \cite {Le1}, \cite{DL2}, to develop a theory of $(1 \; 2 \; \cdots \; k)$-twisted $V^{\otimes k}_K$-modules.  This overlap of constructions was studied by the first author, along with Huang and Lepowsky, in \cite{BHL}.  One of the interests in this overlap is that it holds potential for understanding the geometric underpinnings of twisted theory.  In particular, it can be used to study the relationship  between the space-time geometric setting of the orbifolding used to construct the twisted sectors following the work first initiated by Lepowsky versus the worldsheet geometric setting of the orbifolding used to construct the twisted sectors following \cite{BDM}.   This overlap of constructions of $(1 \; 2 \; \cdots \; k)$-twisted $V^{\otimes k}_K$-modules for lattice vertex operator algebras holds also for the setting of lattice vertex operator superalgebras if and only if $k$ is odd.  That is if $K$ is integral instead of even as a lattice, and the positive integer $k$ is odd, the $k$-cycle permutation isometry acting on $L$ by permuting the $k$ copies of $K$ lifts to the $k$-cycle automorphism of the vertex operator superalgebra $V^{\otimes k}_K = V_L$.    Then one can use either the construction of \cite{B-superpermutation-odd} or the construction of Lepowsky extended to the super setting as in \cite{DL2}, \cite{Xu} (see also \cite{BV-fermion}) to develop a theory of $(1 \; 2 \; \cdots \; k)$-twisted $V^{\otimes k}$-modules.  

However, for the case of a lattice vertex operator superalgebra, when $k$ is even, one {\it can not} use the theory of \cite{DL2}, \cite{Xu} to construct the $(1 \; 2 \; \cdots \; k)$-twisted $V^{\otimes k}_K$-modules.  This is because in order to carry out the program in \cite{DL2}, \cite{Xu}, one must double the order of the lattice isometry in the $k$ even case.    But then the lift of the lattice permutation isometry $(1 \; 2 \; \cdots \; k)$ to an automorphism of the vertex operator superalgebra $V_L$ results in an automorphism of order $2k$; it does not result under this lifting to the $k$-cycle permutation automorphism of order $k$ acting on $V_K^{\otimes k}$.  The details of why this is true are given in \cite{BV-fermion}, in particular, in Remark 4.1.   Thus for $k$ even, the construction in the present paper is the {\it only} known construction and classification of these $(1 \; 2 \; \cdots \; k)$-twisted modules for lattice vertex operator superalgebras.    In fact, the construction and classification we give here can be used to help shed light on the open problem of how to construct and classify twisted modules for lattice vertex operator superalgebras in general, as well as for other vertex operator superalgebras and a general automorphism.

Another important application of the results of this paper comes from considering vertex operator superalgebras which are also supersymmetric.  This is the setting 
of two-dimensional, holomorphic, superconformal field theory, where the vertex operator superalgebras which describe genus-zero particle interactions have additional supersymmetric structure.  A supersymmetric vertex operator superalgebra is a vertex operator superalgebra that, in addition to being a representation of the Virasoro algebra, is also a representation of the $N=n$ Neveu-Schwarz algebra (a Lie superalgebra extension of the Virasoro algebra), where $n$ is the degree of supersymmetry.   See, e.g., \cite{Barron-announce}--\cite{Barron-n2twisted}.   In the case, when $V$ is an $N = n$ supersymmetric vertex operator superalgebra, for $n = 1,2$, the parity-twisted $V$-modules are naturally a representation of the $N = n$ Ramond algebra.  The $N=n$ Ramond algebra is another extension of the Virasoro algebra to a Lie superalgebra related to the $N=n$ Neveu-Schwarz algebra.  In physics terms, the (untwisted) modules for the supersymmetric vertex operator superalgebras are called the ``Neveu-Schwarz sectors" and the parity-twisted modules are called the ``Ramond sectors".  Then the permutation-twisted modules (i.e., the permutation-twisted sectors) for supersymmetric vertex operator superalgebras are the basic building blocks for permutation orbifold superconformal field theory. The current work along with the work of the first author in \cite{B-superpermutation-odd} implies that all permutation-twisted sectors arising in a permutation orbifold superconformal field theory are built up as tensor products of Neveu-Schwarz sectors (coming from the odd cycles) and Ramond sectors (coming from the even cycles).

The case of $(1 \; 2)$-twisted  $(V \otimes V)$-modules is an especially important class of twisted theories for supersymmetric vertex operator superalgebras since it is often the case, cf. \cite{Barron-n2twisted}, that an N=2 supersymmetric vertex operator superalgebra has the form $V\otimes V$ for $V$ an N=1 supersymmetric vertex operator superalgebra, and the transposition $(1 \;2)$ then is a ``mirror map".  This is the setting for ``mirror-twisted sectors" which give rise to representations of the ``topological N=2 superconformal algebra" (also called the ``twisted N=2 superconformal algebras"), yet another super extension of the Virasoro algebra.  See, for example  \cite{Barron-varna} and \cite{Barron-n2twisted}.   The present work shows that there is an intimate connection between these mirror-twisted modules and parity-twisted modules, i.e., between mirror-twisted sectors and Ramond sectors.  In particular, for N=2 supersymmetric vertex operator superalgebras of the form $V \otimes V$ as studied in \cite{Barron-n2twisted}, and for the mirror map $\kappa$ realized as the permutation $(1 \; 2)$ as given in \cite{Barron-n2twisted}, our main result in this paper implies that the category of weak $\kappa$-twisted $(V\otimes V)$-modules is isomorphic to the category of weak parity-twisted $V$-modules.  In other words, for this mirror map $\kappa$, the category of mirror-twisted sectors for $V \otimes V$ is isomorphic  to the category of N=1 Ramond twisted sectors of $V$.  The details of this application of the results of this paper are given by the first author in \cite{B-varna2013}.

One of the motivations for trying to use parity-twisted modules to construct permutation-twisted modules for even order cycles comes from the examples studied by the authors previously in \cite{BV-fermion} of $(1 \; 2 \; \cdots \; k)$-twisted $V^{\otimes k}$-modules for $k$ even and $V$ the one free fermion vertex operator superalgebra following the work of Dong and Zhao in \cite{DZ2}.    This construction for the special case of free fermions studied in \cite{BV-fermion} is completely different than that which we develop in the present paper or as the first author developed in \cite{B-superpermutation-odd}.  But the shape of the classification in \cite{BV-fermion} and the graded dimensions calculated in this work, led us to conjecture that the permutation-twisted modules for even order permutations could be achieved in general using parity-twisted modules.  In this paper we prove this conjecture, by directly constructing a $(1 \; 2 \; \cdots \; k)$-twisted $V^{\otimes k}$-module structure given a parity-twisted $V$-module. 

Note that above we allude to the possibility of building up  permutation-twisted $V^{\otimes k}$-modules for general permutations from the cyclically-twisted modules as constructed in this paper for even cycles and in \cite{B-superpermutation-odd} for odd cycles.  However, in the setting of vertex operator superalgebras, this patching together of $g_1 g_2$-twisted $(V_1 \otimes V_2)$-modules from $g_j$-twisted $V_j$-modules, for $j = 1,2$, has subtleties that arise which complicate the situation in comparison to the nonsuper case as handled in for instance \cite{BDM} for $g$ written as the product of disjoint cycles.  We hope to address these issues in future work.  

In addition, in this work we point out a clarification made first in \cite{BV-fermion} about the definition of $g$-twisted $V$-module for $V$ a vertex operator superalgebra.  In particular, we point out in Remark \ref{parity-stability-remark} below that the notion of ``parity-unstable $g$-twisted $V$-module" as used in, for instance, \cite{DZ}, \cite{DZ2}, \cite{DH}, arises from a notion of $g$-twisted $V$-module that is not the natural categorical definition.  In Remark \ref{parity-stability-remark}, we recall our result from \cite{BV-fermion}, showing that these so called ``parity-unstable $g$-twisted $V$-modules" always come in pairs that together form a ``parity-stable $g$-twisted $V$-module".  Thus it is more appropriate to take the definition of $g$-twisted $V$-module to be a ``parity-stable $g$-twisted $V$-module" in the language of these other works, and then ``parity-unstable $g$-twisted $V$-modules" are simply parity-unstable {\it invariant subspaces} of a (properly defined) $g$-twisted $V$-module.  This is the point of view we take in this paper.  This fact we proved in \cite{BV-fermion} concerning the nature of parity-unstable invariant subspaces of parity-stable $g$-twisted $V$-modules can be used to clarify and simplify many aspects of past works, such as \cite{DZ}, \cite{DZ2}, \cite{DH}.

This paper is organized as follows. In Section \ref{definitions-section}, we recall the definition of vertex operator superalgebra, various notions of twisted modules, and some of their properties.  In Section \ref{Delta-section}, we define the operator $\D(z)$ on a vertex operator superalgebra  $V$ following \cite{B-superpermutation-odd} where the first author generalized the analogous operator defined in \cite{BDM} to the setting of vertex operator superalgebras.  We then recall several important properties of $\D(z)$ proved in \cite{B-superpermutation-odd} which are needed in subsequent sections.   This $\D(z)$ is the main operator from which our twisted vertex operators will be built, in analogy to the nonsuper setting of \cite{BDM} and the odd order super setting of \cite{B-superpermutation-odd}, but now in conjunction with parity-twisted vertex operators acting on a parity-twisted $V$-module rather than, as in \cite{BDM} and \cite{B-superpermutation-odd}, operating in conjunction with vertex operators acting on an untwisted $V$-module.

In Section  \ref{tensor-product-setting-section}, we develop the setting for $(1 \; 2 \; \cdots \; k)$-twisted $V^{\otimes k}$-modules and study the vertex operators for a parity-twisted $V$-module modified by the orbifolding $x \rightarrow x^{1/k}$ and composing with the operator $\Delta_k(x)$.  In particular we derive the supercommutator formula for these operators showing that these operators satisfy the twisted Jacobi identity for odd vectors in $V$ if and only if $k$ is an even integer.  

In Section \ref{tensor-product-twisted-construction-section}, we use the operators to define a weak $g = (1 \; 2 \;  \cdots \; k)$-twisted $\vtk$-module structure on any weak parity-twisted $V$-module in the case when  $k$ is even.  As a result we construct a functor $T_g^k$ {}from the category of weak parity-twisted $V$-modules to the category of weak $g$-twisted $\vtk$-modules such that $T_g^k$ maps weak admissible (resp., ordinary) parity-twisted $V$-modules into weak admissible (resp., ordinary) $g$-twisted $\vtk$-modules.  In addition, $T_g^k$ preserves irreducible objects.

In Section \ref{classification-section}, we define a weak parity-twisted $V$-module structure on any weak $g$-twisted $\vtk$-module, for $V$ a vertex operator superalgebra and $g = (1 \; 2 \; \cdots k)$ for $k$ even.  In so doing, we construct a functor $U_g^k$ {}from the category of weak $g$-twisted $\vtk$-modules to the category of weak parity-twisted $V$-modules such that $T_g^k \circ U_g^k$ and $U_g^k \circ T_g^k$ are the identity functors on their respective categories.   We then use this construction and classification of $(1 \; 2\; \cdots \; k)$-twisted $V^{\otimes k}$-modules in terms of parity-twisted $V$-modules to show in Corollary \ref{graded-dimension-corollary} how the graded dimensions of $(1 \; 2\; \cdots \; k)$-twisted $V^{\otimes k}$-modules are given by the graded dimensions of parity-twisted $V$-modules under the change of variables $q \mapsto q^{1/k}$.

We have intentionally organized this paper to parallel the odd order case developed in \cite{B-superpermutation-odd} as well as the nonsuper case developed in \cite{BDM} so as to highlight the similarities and differences between these settings.

\section{Vertex operator superalgebras, twisted modules and some of their properties}\label{definitions-section}

In this section we recall some of the formal calculus we will need, and we recall the notions of vertex superalgebra and of vertex operator superalgebra, following the notational conventions of \cite{LL}.  We also recall some properties of such structures.  Then we present the notion of  $g$-twisted module for a vertex operator superalgebra and an automorphism $g$ following \cite{B-superpermutation-odd} and \cite{BV-fermion}.  We discuss some categorical aspects of this definition.  Then we briefly discuss the parity map and parity-twisted modules for a vertex operator superalgebra.  

\subsection{Formal calculus}\label{formal-calculus-section}

Let $x, x_0, x_1, x_2,$ etc., denote commuting independent formal variables.
Let $\delta (x) = \sum_{n \in \Z} x^n$.  We will use the binomial
expansion convention, namely, that any expression such as $(x_1 -
x_2)^n$ for $n \in \C$ is to be expanded as a formal power series in
nonnegative integral powers of the second variable, in this case
$x_2$.

For $r \in \mathbb{C}$ we have
\begin{equation}\label{delta-function1}
x_2^{-1}\left(\frac{x_1-x_0} {x_2}\right)^{r}
\delta\left(\frac{x_1-x_0} {x_2}\right) =
x_1^{-1}\left(\frac{x_2+x_0} {x_1}\right)^{-r}\delta
\left(\frac{x_2+x_0} {x_1}\right) ,
\end{equation}
and it is easy to see that for $k$ a positive integer,
\begin{equation}\label{delta-function2}
\sum_{p=0}^{k-1}\left(\frac{x_1-x_{0}}{x_2}\right)^{p/k}
x_2^{-1}\delta\left(\frac{x_1-x_0}{x_2}\right)=
x_2^{-1}\delta\Biggl(\frac{(x_1-x_0)^{1/k}}{x_2^{1/k}}\Biggr).
\end{equation}
Therefore, we have the $\delta$-function identity
\begin{equation}\label{delta-function3}
x_2^{-1} \delta \Biggl( \frac{(x_1 - x_0)^{1/k}}{x_2^{1/k}} \Biggr) = 
x_1^{-1} \delta \Biggl( \frac{(x_2 + x_0)^{1/k}}{x_1^{1/k}} \Biggr).
\end{equation}

We also have the three-term $\delta$-function identity 
\begin{equation}\label{three-term-delta}
x_{0}^{-1}\delta\left(\frac{x_1-x_2}{x_{0}}\right)-x_{0}^{-1}
\delta\left(\frac{x_2-x_1}{-x_{0}}\right)=x_2^{-1}\delta\left(\frac{x_1-x_0}{x_2}\right).
\end{equation}

Let $R$ be a ring, and let $O$ be an invertible linear operator on
$R[x, x^{-1}]$.  We define another linear operator $O^{\Lo}$ by
\[O^{\Lo} \cdot x^n = O^n x^n \]
for any $n \in \Z$.  For example, since the formal variable $z^{1/k}$ 
can be thought of as an invertible linear multiplication operator  
from $\C [x, x^{-1}]$ to $\mathbb{C}[z^{1/k},z^{-1/k}]
[x,x^{-1}]$, we have the corresponding operator $z^{(1/k) \Lo}$ {}from $\C 
[x,x^{-1}]$ to $\mathbb{C}[z^{1/k},z^{-1/k}]
[x,x^{-1}]$.  Note that $z^{(1/k) \Lo}$ can  be extended to a 
linear operator on $\C [[x,x^{-1}]]$ in the obvious way.

\subsection{Vertex superalgebras, vertex operator superalgebras, and some of their properties}

A {\it vertex superalgebra} is a vector space which is $\Z_2$-graded (by {\it sign} or {\it parity})
\begin{equation}
V= V^{(0)} \oplus V^{(1)}
\end{equation}
equipped with a linear map
\begin{eqnarray}
V &\longrightarrow& (\mbox{End}\,V)[[x,x^{-1}]]\\
v &\mapsto& Y(v,x)= \sum_{n\in\Z}v_nx^{-n-1} \nonumber
\end{eqnarray}
such that $v_n \in (\mathrm{End} \; V)^{(j)}$ for $v \in V^{(j)}$, $j \in \Z_2$, 
and equipped with a distinguished vector ${\bf 1} \in V^{(0)}$, (the {\it vacuum vector}), satisfying the following conditions for $u, v \in
V$:
\begin{eqnarray}
u_nv & \! = \! & 0\ \ \ \ \ \mbox{for $n$ sufficiently large};\\
Y({\bf 1},x) & \! = \! & Id_V;\\
Y(v,x){\bf 1} & \! \in \! & V[[x]]\ \ \ \mbox{and}\ \ \ \lim_{x\to 0}Y(v,x){\bf 1}\ = \ v; 
\end{eqnarray}
and for $u,v \in  V$ of homogeneous sign, the {\it Jacobi identity} holds 
\begin{equation*}
x^{-1}_0\delta\left(\frac{x_1-x_2}{x_0}\right) Y(u,x_1)Y(v,x_2) - (-1)^{|u||v|}
x^{-1}_0\delta\left(\frac{x_2-x_1}{-x_0}\right) Y(v,x_2)Y(u,x_1)
\end{equation*}
\begin{equation}
= x_2^{-1}\delta \left(\frac{x_1-x_0}{x_2}\right) Y(Y(u,x_0)v,x_2)
\end{equation}
where $|v| = j$ if $v \in V^{(j)}$ for $j \in \Z_2$.

This completes the definition. We denote the vertex superalgebra just
defined by $(V,Y,{\bf 1})$, or briefly, by $V$.

Note that as a consequence of the definition, we have that there exists a distinguished endomorphism $T \in (\mathrm{End} \; V)^{(0)}$ defined by 
\[ T(v) = v_{-2} {\bf 1} \ \ \ \ \mbox{for $v \in V$} \]
such that 
\[ [T, Y(v,x)] = Y(T(v), x)  \ = \ \frac{d}{dx} Y(v,x),\]
(cf. \cite{LL}, \cite{Barron-alternate}, \cite{Barron-n2axiomatic}).

A {\it vertex operator superalgebra} is a vertex superalgebra with a distinguished vector $\omega\in V_2$ (the {\it conformal element}) satisfying the following conditions: 
\begin{equation}
[L(m),L(n)]=(m-n)L(m+n)+\frac{1}{12}(m^3-m)\delta_{m+n,0}c
\end{equation}
for $m, n\in \Z,$ where
\begin{equation}
L(n)=\omega_{n+1}\ \ \ \mbox{for $n\in \Z$, \ \ \ \ i.e.},\
Y(\omega,x)=\sum_{n\in\Z}L(n)x^{-n-2}
\end{equation}
and $c \in \mathbb{C}$ (the {\it central charge} of $V$);
\begin{equation}
T = L(-1) \ \ \ \ \mbox{i.e.}, \ \frac{d}{dx}Y(v,x)=Y(L(-1)v,x) \ \mbox{for $v \in V$}; 
\end{equation}
$V$ is $\frac{1}{2}\Z$-graded (by {\it weight}) 
\begin{equation}
V=\coprod_{n\in \frac{1}{2} \Z}V_n 
\end{equation}
such that 
\begin{eqnarray}
L(0)v & \! = \! & nv \ = \ (\mbox{wt}\,v)v \ \ \ \mbox{for $n \in  \frac{1}{2} \Z$ and $v\in V_n$}; \\
{\rm dim} \, V_n & \! < \! & \infty ;\\
V_n & \! = \! &  0 \ \ \ \ \mbox{for $n$ sufficiently negative};
\end{eqnarray}
and $V^{(j)} = \coprod_{n\in \Z + \frac{j}{2}}V_n$ for $j \in \Z_2$.

This completes the definition. We denote the vertex operator superalgebra just
defined by $(V,Y,{\bf 1},\omega)$, or briefly, by $V$.

\begin{rem}{\em  For the purposes of this paper we do not assume any supersymmetric properties of a vertex operator superalgebra.  That is we do not assume that $V$ is necessarily a representation for any super extension of the Virasoro algebra.   However one of the main motivations for constructing and classifying permutation-twisted modules for tensor product  vertex operator superalgebras is the application to constructing mirror-twisted sectors for N=2 supersymmetric vertex operator superalgebras as discussed in \cite{Barron-varna}, \cite{Barron-n2twisted}, and as presented as an application to the results of this paper in \cite{B-varna2013}. 
}
\end{rem}

\begin{rem}\label{VOSAs-tensor-remark} {\em Note that if $(V, Y, {\bf 1})$ and $(V', Y', {\bf 1}')$ are two vertex superalgebras, then $(V \otimes V', \; Y \otimes Y', \; {\bf 1} \otimes {\bf 1}')$ is a vertex superalgebra where
\begin{equation}\label{define-tensor-product}
(Y \otimes Y') (u \otimes u', x) (v \otimes v') = (-1)^{|u'||v|} Y(u,x)v \otimes Y'(u',x)v'.
\end{equation}
If in addition, $V$ and $V'$ are vertex operator superalgebras with conformal vectors $\omega$ and $\omega'$ respectively, then $V\otimes V'$ is a vertex operator superalgebra with conformal vector $\omega \otimes {\bf 1}' + {\bf 1} \otimes \omega'$.}
\end{rem}

\begin{rem}\label{parity-grading-on-V}
{\em
As a consequence of the definition of vertex operator superalgebra, independent of the requirement that as a vertex superalgebra we should have $v_n \in (\mathrm{End} \, V)^{(|v|)}$, we have that $\mathrm{wt} (v_n u ) = \mathrm{wt} u + \mathrm{wt} v - n -1$, for $u,v \in V$ and $n \in \Z$.  This implies that $v_n \in (\mathrm{End} \, V)^{(j)}$ if and only if $v \in V^{(j)}$ for $j \in \mathbb{Z}_2$, without us having to assume this as an axiom.  
}
\end{rem}

\subsection{The notion of twisted module}\label{twisted-module-definition-section}

Let $(V, Y, {\bf 1})$ and $(V', Y', {\bf 1}')$ be vertex superalgebras.  A {\it homomorphism of vertex superalgebras} is a linear map $g: V \longrightarrow V'$ of $\Z_2$-graded vector spaces  such that $g({\bf 1}) = {\bf 1}'$ and 
\begin{equation}\label{automorphism}
g Y(v,x) =Y'(gv,x)g
\end{equation}
for $v\in V.$   Note that this implies that $g \circ T = T'\circ g$.  If in addition, $V$ and $V'$ are vertex operator superalgebras with conformal elements $\omega$ and $\omega'$, respectively, then a  {\it homomorphism of vertex operator superalgebras} is a homomorphism of vertex superalgebras $g$ such that $g(\omega) = \omega'$.  In particular $g V_n\subset V'_n$ for $n\in  \frac{1}{2} \mathbb{Z}$.

An {\it automorphism} of a vertex (operator) superalgebra $V$ is a bijective vertex (operator) superalgebra homomorphism from $V$ to $V$.

If $g$ is an automorphism of a vertex (operator) superalgebra $V$ such that $g$ 
has finite order, then $V$ is a direct sum of the eigenspaces $V^j$ of $g$,
\begin{equation}
V=\coprod_{j\in \Z /k \Z }V^j,
\end{equation}
where $k \in \mathbb{Z}_+$ and $g^k = 1$, and
\begin{equation}
V^j=\{v\in V \; | \; g v= \eta^j v\},
\end{equation}
for $\eta$ a fixed primitive $k$-th root of unity.  We denote the projection of $v \in V$ onto the $j$-th eigenspace, $V^j$, by $v_{(j)}$.

Let $(V, Y, \mathbf{1})$ be a vertex superalgebra and $g$ an automorphism of $V$ of period $k$.  A {\em $g$-twisted $V$-module} is a $\mathbb{Z}_2$-graded vector space $M = M^{(0)} \oplus M^{(1)}$ equipped with a linear map 
\begin{eqnarray}
V &\rightarrow& (\End \, M)[[x^{1/k},x^{-1/k}]] \\  
v  &\mapsto& Y_g(v,x)=\sum_{n\in \frac{1}{k} \mathbb{Z} }v^g_n x^{-n-1}, \nonumber 
\end{eqnarray}
with $v_n^g \in (\mathrm{End} \; M)^{(|v|)}$, such that for $u,v\in V$ and $w\in M$ the following hold: 
\begin{equation}
v^g_nw=0 \mbox{ if $n$ is sufficiently large};
\end{equation}
\begin{equation} 
Y_g({\bf 1},x)= Id_M;
\end{equation}
the twisted Jacobi identity holds: for $u, v\in V$ of homogeneous sign
\[x^{-1}_0\delta\left(\frac{x_1-x_2}{x_0}\right)
Y_g(u,x_1)Y_g(v,x_2)- (-1)^{|u||v|} x^{-1}_0\delta\left(\frac{x_2-x_1}{-x_0}\right)
Y_g(v,x_2)Y_g(u,x_1) \]
\begin{equation}\label{twisted-Jacobi}
= \frac{x_2^{-1}}{k} \sum_{ j \in \mathbb{Z}/k\mathbb{Z}} 
\delta\left( \eta^j \frac{(x_1-x_0)^{1/k}}{x_2^{1/k}}\right) Y_g(Y(g^ju,x_0)v,x_2).
\end{equation}

This completes the definition of $g$-twisted $V$-module for a vertex superalgebra.  We denote the  $g$-twisted $V$-module just defined by $(M, Y_g)$ or just by $M$ for short.  

We note that the generalized twisted Jacobi identity (\ref{twisted-Jacobi}) is equivalent to
\[x^{-1}_0\delta\left(\frac{x_1-x_2}{x_0}\right)
Y_g(u,x_1)Y_g(v,x_2)- (-1)^{|u||v|} x^{-1}_0\delta\left(\frac{x_2-x_1}{-x_0}\right)
Y_g(v,x_2)Y_g(u,x_1) \]
\begin{equation}\label{twisted-Jacobi-eigenspace}
= x_2^{-1}  \left(\frac{x_1-x_0}{x_2}\right)^{-r/k}
\delta\left( \frac{x_1-x_0}{x_2}\right) Y_g(Y(u,x_0)v,x_2)
\end{equation}
for $u \in V^r$, $r = 0, \dots, k-1$.  In addition, this implies that for $v\in V^r$, 
\begin{equation}\label{Y-for-eigenvector}
Y_g(v,x)=\sum_{n\in r/k + \Z }v^g_n x^{-n-1},
\end{equation}
and that 
\begin{equation}\label{limit-axiom}
Y_g(gv,x) = \lim_{x^{1/k} \to \eta^{-1} x^{1/k}} Y_g(v,x) 
\end{equation}
for $v \in V$.

Note that if $g = 1$, then a $g$-twisted $V$-module is a $V$-module for the vertex superalgebra $V$.  If $(V, Y, {\bf 1}, \omega)$ is a vertex operator superalgebra and $g$ is a vertex operator superalgebra automorphism of $V$, then since $\omega \in V^{0}$, we have that $Y_g(\o,x)$ has component operators which satisfy the Virasoro algebra relations and $Y_g(L(-1)u,x)=\frac{d}{dx}Y_g(u,x)$. In this case, a $g$-twisted $V$-module as defined above, viewed as a vertex superalgebra module, is called a {\em weak $g$-twisted $V$-module} for the vertex operator superalgebra $V$.

A {\em weak admissible}  $g$-twisted $V$-module is a weak $g$-twisted
$V$-module $M$ which carries a $\frac{1}{2k}\N$-grading
\begin{equation}\label{m3.12}
M=\coprod_{n\in\frac{1}{2k}\N}M(n)
\end{equation}
such that $v^g_mM(n)\subseteq M(n+\wt \; v-m-1)$ for homogeneous $v\in V$, $n \in \frac{1}{2k} \N$, and $m \in \frac{1}{k} \Z$.  We may assume that $M(0)\ne 0$ if $M\ne 0$.   If $g = 1$, then a weak admissible $g$-twisted $V$-module is called a weak admissible $V$-module.  

\begin{rem}\label{even-grading-remark}
{\em
Note that if $k$ is even where $k$ is the order of $g$, then the grading of a weak admissible $g$-twisted $V$-module can be assumed to be a $\frac{1}{k}\mathbb{N}$ grading. 
}
\end{rem}

An (ordinary) $g$-twisted $V$-module is a weak $g$-twisted $V$-module $M$ graded by $\C$ induced by the spectrum of $L(0).$ That is, we have
\begin{equation}\label{g3.14}
M=\coprod_{\lambda \in{\C}}M_{\lambda} 
\end{equation}
where $M_{\l}=\{w\in M|L(0)^gw=\l w\}$, for $L(0)^g = \omega_1^g$. Moreover we require that $\dim
M_{\l}$ is finite and $M_{n/2k +\l}=0$ for fixed $\l$ and for all
sufficiently small integers $n$.  If $g = 1$, then a $g$-twisted $V$-module is a $V$-module.

A {\it homomorphism of weak $g$-twisted $V$-modules}, $(M, Y_g)$ and $(M', Y_g')$, is a linear map $f: M \longrightarrow M'$ satisfying 
\begin{equation}
f(Y_g(v, x)w) = Y'_g(v, x) f(w).
\end{equation} 
for $v \in V$, and $w \in M$.  If in addition, $M$ and $M'$ are weak admissible $g$-twisted $V$-modules, then a {\it homomorphism of weak admissible $g$-twisted $V$-modules}, is a homomorphism of weak $g$-twisted $V$-modules such that 
$f(M(n)) \subseteq M'(n)$.  And if $M$ and $M'$ are ordinary $g$-twisted $V$-modules, then a {\it homomorphism of $g$-twisted $V$-modules}, is a homomorphism of weak  $g$-twisted $V$-modules such that 
$f(M_\lambda) \subseteq M'_\lambda$. 

We note here that an example of an automorphism of a vertex operator superalgebra is the {\it parity map}
\begin{eqnarray}\label{define-parity}
\sigma : V &\longrightarrow& V\\
v & \mapsto & (-1)^{|v|} v . \nonumber
\end{eqnarray}

\begin{rem}\label{parity-stability-remark}
{\em
In many works on vertex superalgebras, e.g. \cite{Li2}, \cite{DZ}, \cite{DZ2}, \cite{DH}, \cite{Barron-varna}, \cite{Barron-n2twisted},  the condition that $v_n^g \in (\mathrm{End} \; M)^{(|v|)}$ for $v \in V$, is not given as one of the axioms of a $g$-twisted $V$-module $M$ for a vertex superalgebra $V$.  That is, it is not assumed that the $\mathbb{Z}_2$-grading of $V$ is compatible with the $\mathbb{Z}_2$-grading of $M$ via the action of $V$ as super-endomorphisms acting on $M$.  Then in, for instance, \cite{DZ}, \cite{DZ2}, \cite{DH}, the notion of ``parity-stable $g$-twisted $V$-module" is introduced for those modules that are representative of the $\mathbb{Z}_2$-grading of $V$, and modules that do not have this property are called ``parity-unstable".  Thus a ``parity-unstable $g$-twisted $V$-module" is a vector space $M$ that satisfies all the axioms of our notion of $g$-twisted $V$-module except for $v_n^g \in (\mathrm{End} \; M)^{(|v|)}$.  That is there exists no $\mathbb{Z}_2$-grading on $M$ such that the operators $v_n^g$ act as even or odd endomorphisms on $M$ according to the sign (or parity) of $v$.   However in \cite{BV-fermion}, we prove that any so called ``parity-unstable $g$-twisted $V$-module" can always be realized as a subspace of a $g$-twisted $V$-module in the sense of the notion of $g$-twisted $V$-module we give above.  In particular, in \cite{BV-fermion} we proved the following (reworded to fit our current setting): 
\vspace{-.2in}
\begin{quote}
\begin{thm}\label{parity-stability-theorem}(\cite{BV-fermion})
Let $V$ be a vertex superalgebra and $g$ an automorphism.  Suppose $(M, Y_M)$ is a ``parity-unstable $g$-twisted $V$-module" (in the sense of \cite{DZ}).  Then $(M, Y_M \circ \sigma_V)$ is a ``parity-unstable $g$-twisted $V$-module" which is not isomorphic to $(M, Y_M)$.  Moreover $(M, Y_M)  \oplus (M, Y_M \circ \sigma_V)$ is a ``parity-stable $g$-twisted $V$-module" (in the sense of \cite{DZ}), i.e., a $g$-twisted $V$ module in terms of the definition given above in this paper.   
\end{thm}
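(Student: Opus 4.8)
The plan is to set $\tilde{Y}_M := Y_M \circ \sigma_V$, so that $\tilde{Y}_M(v,x) = (-1)^{|v|}\,Y_M(v,x)$ for homogeneous $v \in V$, and to establish the three assertions in turn. For the first, the truncation condition for $\tilde{Y}_M$ is inherited from that for $Y_M$, and $\tilde{Y}_M(\mathbf{1},x) = Y_M(\sigma_V\mathbf{1},x) = Y_M(\mathbf{1},x) = \mathrm{Id}_M$ since $\mathbf{1}$ is even. For the twisted Jacobi identity (\ref{twisted-Jacobi}) I would substitute $\tilde{Y}_M(u,x_1) = (-1)^{|u|}Y_M(u,x_1)$ and $\tilde{Y}_M(v,x_2) = (-1)^{|v|}Y_M(v,x_2)$ into the left-hand side, factor out the scalar $(-1)^{|u|+|v|}$, and apply the twisted Jacobi identity already satisfied by $Y_M$; on the right-hand side one uses that $g$ preserves parity, so $|g^j u| = |u|$, together with the fact (Remark \ref{parity-grading-on-V}) that $(g^j u)_n v \in V^{(|u|+|v|)}$ for all $n$, which gives $\sigma_V\bigl(Y(g^j u,x_0)v\bigr) = (-1)^{|u|+|v|}\,Y(g^j u,x_0)v$ and hence $\tilde{Y}_M(Y(g^j u,x_0)v,x_2) = (-1)^{|u|+|v|}\,Y_M(Y(g^j u,x_0)v,x_2)$; the two sides then agree. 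Finally $(M,\tilde{Y}_M)$ is again \emph{parity-unstable}: any $\mathbb{Z}_2$-grading of $M$ making each $\tilde{v}^g_n$ an endomorphism of parity $|v|$ would, since $\tilde{v}^g_n$ and $v^g_n$ differ only by the nonzero scalar $(-1)^{|v|}$, do the same for $(M,Y_M)$, contradicting the hypothesis.

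For the third assertion I would form the direct sum $N := M \oplus M$ with $Y_N(v,x)(a,b) := \bigl(Y_M(v,x)a,\ \tilde{Y}_M(v,x)b\bigr)$, which is precisely $(M,Y_M)\oplus(M,Y_M\circ\sigma_V)$, and equip $N$ with the $\mathbb{Z}_2$-grading given by the diagonal and anti-diagonal subspaces $N^{(0)} := \{(a,a) : a \in M\}$ and $N^{(1)} := \{(a,-a) : a \in M\}$. A direct check shows that for $v \in V^{(0)}$ the operator $Y_N(v,x)$ preserves both $N^{(0)}$ and $N^{(1)}$, while for $v \in V^{(1)}$ it interchanges them --- exactly because the second component of $Y_N$ carries the extra sign $(-1)^{|v|}$ --- so that each $v^g_n$ acting on $N$ lies in $(\mathrm{End}\,N)^{(|v|)}$, which is the compatibility axiom missing in the parity-unstable setting. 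The remaining axioms (truncation, the vacuum property, the twisted Jacobi identity, and in the vertex operator superalgebra case the Virasoro relations and the $L(-1)$-derivative property) hold on $N$ because they hold on each of the two summands. Hence $(N,Y_N)$ is a genuine (parity-stable) $g$-twisted $V$-module.

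For the second assertion I would argue by contradiction. Suppose $f\colon (M,Y_M)\to(M,\tilde{Y}_M)$ is an isomorphism of $g$-twisted $V$-modules, i.e.\ $f$ is bijective and $f(v^g_n w) = (-1)^{|v|}\,v^g_n f(w)$ for homogeneous $v$, all $n$ and all $w\in M$. Applying this relation twice shows $f^2$ commutes with every $v^g_n$, so $f^2$ is a (bijective) endomorphism of the $g$-twisted $V$-module $(M,Y_M)$; since the parity-unstable modules occurring in \cite{DZ}, \cite{DZ2}, \cite{DH} are irreducible, Schur's lemma forces $f^2 = \lambda\,\mathrm{Id}_M$ with $\lambda\in\mathbb{C}^\times$. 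Choosing a square root and replacing $f$ by $\phi := \lambda^{-1/2}f$ gives $\phi^2 = \mathrm{Id}_M$ together with $\phi(v^g_n w) = (-1)^{|v|}\,v^g_n\phi(w)$, so that $M = \ker(\phi - \mathrm{Id}_M)\oplus\ker(\phi + \mathrm{Id}_M)$ is a $\mathbb{Z}_2$-grading of $M$ for which $v^g_n$ is even when $|v|=0$ and odd when $|v|=1$; thus $(M,Y_M)$ would be parity-stable, contradicting the hypothesis. I expect this to be the main obstacle: the first and third assertions amount to bookkeeping of signs in the twisted Jacobi identity and to the choice of the (anti-)diagonal grading on $M\oplus M$, whereas the non-isomorphism requires enough control over $\mathrm{End}(M,Y_M)$ to promote the intertwiner $f$ to an involution, which is exactly where the irreducibility implicit in the notion of ``parity-unstable module'' of \cite{DZ} enters.
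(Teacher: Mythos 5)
The paper itself does not actually prove this theorem; it is quoted (reworded) from \cite{BV-fermion}, so there is no in-paper proof to compare your argument against. That said, your treatment of the first and third assertions is correct. The sign bookkeeping in the twisted Jacobi identity works exactly as you describe (the key points being that $g$ commutes with $\sigma_V$, so $|g^ju|=|u|$, and that $(g^ju)_n v$ has parity $|u|+|v|$, so both sides pick up the common factor $(-1)^{|u|+|v|}$), and the observation that $\tilde{v}^g_n$ differs from $v^g_n$ only by a nonzero scalar correctly transfers parity-unstability. The anti-diagonal grading $N^{(0)} = \{(a,a)\}$, $N^{(1)} = \{(a,-a)\}$ on $M\oplus M$ is the right construction and does give the compatibility $v^g_n \in (\mathrm{End}\,N)^{(|v|)}$.

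The one place where the argument is not self-contained is the non-isomorphism. As you yourself flag, you invoke Schur's lemma to force $f^2 = \lambda\,\mathrm{Id}_M$, and this requires $M$ to be irreducible. The theorem as stated does not assume irreducibility, and without it the step fails: $f^2$ is merely some automorphism of $(M,Y_M)$, and there is no general reason it should be scalar, or even admit a square root inside $\mathrm{Aut}(M,Y_M)$, so you cannot manufacture the involution $\phi$ whose $\pm 1$-eigenspaces would supply the forbidden $\mathbb{Z}_2$-grading. In the contexts where ``parity-unstable'' modules actually arise (\cite{DZ}, \cite{DZ2}, \cite{DH}, \cite{BV-fermion}) the modules in question are irreducible, so the hypothesis is almost certainly intended, but it should be stated explicitly rather than read into the phrase ``parity-unstable.'' If one wants to avoid Schur's lemma altogether, the cleaner formulation is: \emph{any} bijective intertwiner $\phi$ with $\phi\,v^g_n = (-1)^{|v|}v^g_n\phi$ and $\phi^2 = \mathrm{Id}_M$ produces a compatible grading via its eigendecomposition; irreducibility is exactly what lets one normalize an arbitrary $f$ to such a $\phi$. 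Everything else in your proposal is sound.
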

\end{quote}
Requiring weak twisted modules to be ``parity stable" as part of the definition gives the more canonical notion of twisted module from a categorical point of view, for instance, for the purpose of defining a $(V_1 \otimes V_2)$-module structure on $M_1 \otimes M_2$ for $M_j$ a $V_j$-module, for $j = 1,2$.  (See e.g., (\ref{define-tensor-product})).  In particular, the notion of a $g$-twisted $V$-module corresponding to a {\it representation} of $V$ as a vertex superalgebra only holds for ``parity-stable $g$-twisted $V$-modules", in that the vertex operators acting on a $g$-twisted $V$-module have coefficients in $\mathrm{End} \, M $ such that, the operators $v_m^g$ have a $\mathbb{Z}_2$-graded structure compatible with that of $V$.   For instance the operators $v_0^g$, for $v \in V$, give a representation of the Lie superalgebra generated by $v_0$ in $\mathrm{End} \, V$ if and only if $M$ is ``parity stable".  This corresponds to $V$ acting (via the modes of the vertex operators) as endomorphisms on $M$ in the category of vector spaces (i.e., via even or odd endomorphisms) rather than in the category of $\mathbb{Z}_2$-graded vectors spaces (i.e., as grade-preserving and thus strictly even endomorphisms).   However, it is interesting to note that, as is shown in \cite{BV-fermion}, for a lift of a lattice isometry, the ``twisted modules" for a lattice vertex operator superalgebra constructed following \cite{DL2}, \cite{Xu}, naturally sometimes give rise to pairs of parity-unstable invariant subspaces in the language of the current paper, i.e., to pairs of ``parity-unstable $g$-twisted modules", that then must be taken as a direct sum to realize the actual $g$-twisted module that is constructed.  }
\end{rem}

\subsection{Parity-twisted $V$-modules}  A crucial example in the study of $g$-twisted $V$-modules for $V$ a vertex superalgebra is that of parity-twisted $V$-modules.  (Not to be confused with the notion discussed above of ``parity-stable" or ``parity-unstable" modules.) Above in (\ref{define-parity}), we define the parity automorphism, denoted $\sigma$, for any vertex superalgebra.  Thus we have the notion of a {\it parity-twisted $V$-module}, also denoted by {\it $\sigma$-twisted $V$-module}.

\begin{rem}\label{parity-grading-remark}
{\em Note that it follows from the definitions, that any weak admissible $\sigma$-twisted module for a vertex operator superalgebra is $\mathbb{N}$-graded.  }
\end{rem}

If $V$,  in addition to being a vertex operator superalgebra is N=1 or N=2 supersymmetric, i.e., is also a representation of the N=1 or N=2 Neveu-Schwarz algebra super extension of the Virasoro algebra, then a $\sigma$-twisted $V$-module is naturally a representation of the N=1 or N=2 Ramond algebra, respectively, cf. \cite{Barron-varna}, \cite{Barron-n2twisted}, \cite{B-varna2013} and references therein.

\section{The operator $\Delta_k (x)$}\label{Delta-section}
\setcounter{equation}{0} 

In this section, we recall the operator $\Delta_k(x)$ on a vertex operator superalgebra $V$ for a fixed positive integer $k$ as first defined in \cite{BDM} and then extended to vertex operator superalgebras in \cite{B-superpermutation-odd}.  In Section \ref{tensor-product-twisted-construction-section}, we will use $\Delta_k(x)$ for $k$ even to construct a $(1 \; 2 \; \cdots \; k)$-twisted  $V^{\otimes k}$-module {}from a parity-twisted $V$-module.

Let $\ZZ$ denote the positive integers.  Let $x$, $z$, and $z_0$ be formal variables commuting with each other.  Consider the polynomial
\[\frac{1}{k} (1 + x)^k - \frac{1}{k} \in x \C [x] . \]  
Following \cite{B-superpermutation-odd}, for $k \in \ZZ$, we define $a_j \in \C$ for $j \in \ZZ$, by  
\begin{equation}\label{define-a}
\exp \Biggl( - \sum_{j \in \ZZ} a_j  \Lx
\Biggr) \cdot x = \frac{1}{k} (1 + x)^k -
\frac{1}{k} .
\end{equation}
For example, $a_1=(1-k)/2$ and $a_2=(k^2-1)/12.$

Let
\begin{eqnarray*}
f(x) &=& z^{1/k} \exp \Biggl(- \sum_{j \in \ZZ} a_j  \Lx \Biggr) 
\cdot x \\
&=& \exp \Biggl(- \sum_{j \in \ZZ} a_j  \Lx \Biggr) \cdot
z^{(1/k) \Lo} \cdot x\\
&=& \frac{z^{1/k}}{k} (1 + x)^k - \frac{z^{1/k}}{k} \; \; \in
z^{1/k}x\mathbb{C}[x].
\end{eqnarray*}
Then the compositional inverse of $f(x)$ in $x\mathbb{C}[z^{-1/k}, 
z^{1/k}][[x]]$ is given by
\begin{eqnarray*}
f^{-1} (x) &=& z^{- (1/k) \Lo} \exp \Biggl( \sum_{j \in \ZZ} a_j  \Lx 
\Biggr) \cdot x \\
&=& z^{- 1/k} \exp \Biggl( \sum_{j \in \ZZ} a_j 
z^{- j/k} \Lx \Biggr) \cdot x \\
&=& (1 + k z^{- 1/k} x)^{1/k} - 1
\end{eqnarray*} 
where the last line is considered as a formal power series in
$z^{-1/k}x \mathbb{C}[z^{-1/k}][[x]]$, i.e., we are expanding
about $x = 0$ taking $1^{1/k} = 1$.

Let $V= (V, Y, {\bf 1}, \omega)$ be a vertex operator superalgebra.  In 
$(\End \;V)[[z^{1/2k}, z^{-1/2k}]]$, define 
\begin{equation}\label{Delta-for-a-module}
\Delta_k (z) = \exp \Biggl( \sum_{j \in \ZZ} a_j z^{- \frac{j}{k}} L(j) 
\Biggr) (k^\frac{1}{2})^{-2L(0)} \left(z^{\frac{1}{2k}\left( k-1\right)}\right)^{- 2L(0)} .
\end{equation}
In \cite{B-superpermutation-odd}, we proved the following proposition and lemma.

\begin{prop}\label{psun1} (\cite{B-superpermutation-odd})
Let $V$ be a vertex operator superalgebra.  We have the following identity in  $((\End \;V)[[z^{1/2k}, z^{-1/2k}]]) [[z_0, z_0^{-1}]]$ 
\[\Delta_k (z) Y(u, z_0) \Delta_k (z)^{-1} = Y(\Delta_k 
(z + z_0)u, \left( z + z_0 \right)^{1/k} 
- z^{1/k}  ) ,\]
for all $u \in V$.  
\end{prop}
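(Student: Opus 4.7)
The plan is to recognize $\Delta_k(z)$ as the operator implementing the formal change of local coordinates $\phi_z \colon w \mapsto (z+w)^{1/k} - z^{1/k}$ at $w = 0$, and then to invoke the general conjugation formula for changes of coordinates acting on vertex operators, in the spirit of Huang--Lepowsky--Zhu. Since the operators $L(j)$ appearing in $\Delta_k(z)$ are all even, no extra sign corrections arise in the super setting for this particular identity.

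To identify $\Delta_k(z)$ with the change of variables $\phi_z$, I would decompose $\Delta_k(z) = E(z)\, A(z)$ where $A(z) = (k z^{(k-1)/k})^{-L(0)}$ and $E(z) = \exp\bigl(\sum_{j \in \ZZ} a_j z^{-j/k} L(j)\bigr)$. The scaling factor $A(z)$ encodes the linear part $w \mapsto \phi_z'(0)\cdot w = k^{-1} z^{(1-k)/k} w$, and the exponential $E(z)$ encodes the higher-order correction. Using the defining relation \eqref{define-a} for the $a_j$ together with the explicit inverse $f^{-1}(w) = (1 + k z^{-1/k} w)^{1/k} - 1$ recorded in the excerpt, a short computation shows that composing these two changes of variables reproduces $\phi_z$ exactly, establishing the identification $\Delta_k(z) = U(\phi_z)$.

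Next, I would apply the general conjugation formula
\[
U(\phi)\, Y(u, z_0)\, U(\phi)^{-1} \ = \ Y(U(\phi^{[z_0]}) u, \phi(z_0)),
\]
where $\phi^{[z_0]}(t) := \phi(z_0 + t) - \phi(z_0)$ is the local coordinate change recentered at $z_0$. For $\phi = \phi_z$ a direct computation gives
\[
\phi_z^{[z_0]}(t) \ = \ (z + z_0 + t)^{1/k} - (z + z_0)^{1/k} \ = \ \phi_{z + z_0}(t),
\]
so by the first step $U(\phi_z^{[z_0]}) = \Delta_k(z + z_0)$, and of course $\phi_z(z_0) = (z+z_0)^{1/k} - z^{1/k}$. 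Substituting these into the conjugation formula yields precisely the stated identity.

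The main technical obstacle is rigorously establishing the conjugation formula by $E(z)$. Conjugation by the scaling $A(z)$ is routine via the standard identity $r^{L(0)} Y(u, z_0) r^{-L(0)} = Y(r^{L(0)} u, r z_0)$ applied with $r = (k z^{(k-1)/k})^{-1}$. Conjugation by the exponential $E(z)$, however, requires the explicit change-of-variables formula for $\exp\bigl(\sum_{j \in \ZZ} c_j L(j)\bigr)$ acting on $Y(v, y_0)$, whose proof in the nonsuper case is carried out in \cite{BDM} and whose extension to the super setting was given in \cite{B-superpermutation-odd}. Chaining these two stages and carefully tracking the fractional powers of $z$ as formal variables produces the right-hand side; the crucial choice of the constants $a_j$ via \eqref{define-a} is exactly what makes the resulting change of variables equal to $(z+z_0)^{1/k} - z^{1/k}$ and the resulting operator factor equal to $\Delta_k(z+z_0)$.
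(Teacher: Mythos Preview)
The paper does not supply a proof of this proposition; it simply cites \cite{B-superpermutation-odd}, where the argument (extending the computation in \cite{BDM} to the super case) is carried out. Your outline is exactly the strategy used there: interpret $\Delta_k(z)$ as the change-of-variables operator $U(\phi_z)$ for $\phi_z(w)=(z+w)^{1/k}-z^{1/k}$, split it into the $L(0)$-scaling by $\phi_z'(0)=(k z^{(k-1)/k})^{-1}$ and the exponential in positive Virasoro modes determined by the constants $a_j$, and then apply the Huang-type conjugation formula together with the recentering identity $\phi_z^{[z_0]}=\phi_{z+z_0}$. Since only even Virasoro modes enter, no parity signs intervene, which is precisely why the proof in \cite{B-superpermutation-odd} goes through verbatim from \cite{BDM}. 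So your proposal is correct and coincides with the argument the paper defers to.
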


\begin{lem}\label{c2.5} (\cite{B-superpermutation-odd})
For $V$ a vertex operator superalgebra, in $(\End \; V)[[z^{1/2k}, z^{-1/2k}]]$, we have
\begin{eqnarray}
\Delta_k (z) L(-1) - \frac{1}{k} z^{1/k - 1} L(-1) 
\Delta_k (z) 
&=&  \frac{\partial}{\partial z}\Delta_k (z) , \label{identity in voa}\\ 
\Delta_k (z)^{-1} L(-1) - k z^{- 1/k + 1} L(-1) 
\Delta_k (z)^{-1}   
&=&  k z^{- 1/k + 1} \frac{\partial}{\partial z}
\Delta_k (z)^{-1}  . \label{second identity in voa}
\end{eqnarray}
\end{lem}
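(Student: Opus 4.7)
The plan is to extract both identities directly from Proposition~\ref{psun1} by differentiating in the second formal variable $z_0$ and then exploiting the creation property of a vertex operator superalgebra. The guiding observation is that $L(-1)$ appears naturally on both sides of the identity in Proposition~\ref{psun1}: on the left through $\frac{d}{dz_0}Y(u,z_0) = Y(L(-1)u,z_0)$, and on the right through the chain rule applied to the argument $(z+z_0)^{1/k}-z^{1/k}$, whose derivative is $\frac{1}{k}(z+z_0)^{1/k-1}$. This mismatch of derivatives is precisely what will produce the $z^{1/k-1}/k$ factor appearing in (\ref{identity in voa}).

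More concretely, the first step is to apply $\frac{\partial}{\partial z_0}$ to both sides of the identity in Proposition~\ref{psun1} for arbitrary $u \in V$. On the left this gives $\Delta_k(z) Y(L(-1)u, z_0) \Delta_k(z)^{-1}$, which by Proposition~\ref{psun1} applied to $L(-1)u$ equals $Y\bigl(\Delta_k(z+z_0)L(-1)u,\,(z+z_0)^{1/k}-z^{1/k}\bigr)$. On the right, differentiation under $Y$ and the chain rule yield
\[
Y\!\Bigl(\tfrac{\partial}{\partial z_0}\Delta_k(z+z_0)u,\,(z+z_0)^{1/k}-z^{1/k}\Bigr) + \tfrac{1}{k}(z+z_0)^{1/k-1}\,Y\!\Bigl(L(-1)\Delta_k(z+z_0)u,\,(z+z_0)^{1/k}-z^{1/k}\Bigr).
\]
The next step is to apply both sides to the vacuum $\mathbf{1}$ and take the limit as the inner formal variable tends to $0$, using the creation axiom $\lim_{x\to 0}Y(v,x)\mathbf{1} = v$. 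This strips away the $Y(\cdot,\cdot)$ wrappers and produces
\[
\Delta_k(z+z_0)L(-1)u \;=\; \tfrac{\partial}{\partial z_0}\Delta_k(z+z_0)u + \tfrac{1}{k}(z+z_0)^{1/k-1}L(-1)\Delta_k(z+z_0)u.
\]
Setting $z_0 = 0$ and noting $\tfrac{\partial}{\partial z_0}\Delta_k(z+z_0)\big|_{z_0=0} = \tfrac{\partial}{\partial z}\Delta_k(z)$ gives identity (\ref{identity in voa}) after rearrangement, since $u \in V$ was arbitrary.

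For identity (\ref{second identity in voa}), I would derive it as a formal consequence of (\ref{identity in voa}) rather than repeat the argument. Multiplying (\ref{identity in voa}) on the left and right by $\Delta_k(z)^{-1}$ and using $\Delta_k(z)^{-1}\bigl(\tfrac{\partial}{\partial z}\Delta_k(z)\bigr)\Delta_k(z)^{-1} = -\tfrac{\partial}{\partial z}\Delta_k(z)^{-1}$ (the derivative of $\Delta_k\Delta_k^{-1} = 1$) yields
\[
L(-1)\Delta_k(z)^{-1} - \tfrac{1}{k}z^{1/k-1}\Delta_k(z)^{-1}L(-1) \;=\; -\tfrac{\partial}{\partial z}\Delta_k(z)^{-1},
\]
and multiplying through by $-kz^{1-1/k}$ gives (\ref{second identity in voa}).

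The main conceptual point — really the only nontrivial step — is the application of $\frac{\partial}{\partial z_0}$ inside $Y$ in the second formal argument: one must check that the chain rule $\frac{\partial}{\partial z_0}Y(A(z_0),h(z,z_0)) = Y(\partial_{z_0}A(z_0),h) + \frac{\partial h}{\partial z_0}Y(L(-1)A(z_0),h)$ is legitimate in the formal setting, which rests on the standard identity $\frac{d}{dx}Y(v,x) = Y(L(-1)v,x)$ together with the fact that $h(z,z_0)=(z+z_0)^{1/k}-z^{1/k}$ begins at strictly positive order in $z_0$, so all substitutions are well-defined formal series. Once this is granted, the argument is routine formal manipulation.
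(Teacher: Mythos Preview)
The paper does not give its own proof of this lemma; both Proposition~\ref{psun1} and Lemma~\ref{c2.5} are simply quoted from \cite{B-superpermutation-odd}, so there is nothing in the present paper to compare against.

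Your argument is correct. One small clarification: the phrase ``take the limit as the inner formal variable tends to $0$'' followed by ``setting $z_0 = 0$'' suggests two independent specializations, but the inner argument $h=(z+z_0)^{1/k}-z^{1/k}$ is not an independent variable. The clean way to justify your intermediate identity in $z+z_0$ is to use $Y(v,x)\mathbf{1}=e^{xL(-1)}v$, so that after applying to $\mathbf{1}$ each side becomes $e^{hL(-1)}$ times the corresponding first argument; since $h$ has no constant term, $e^{hL(-1)}$ is invertible and may be cancelled. Then set $z_0=0$ as you do. Your derivation of (\ref{second identity in voa}) from (\ref{identity in voa}) via conjugation and the product rule on $\Delta_k\Delta_k^{-1}=1$ is straightforward and correct.

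One caveat worth flagging: since you are using Proposition~\ref{psun1} to prove Lemma~\ref{c2.5}, and both are imported from the same source, you should check that the proof of Proposition~\ref{psun1} in \cite{B-superpermutation-odd} does not itself invoke Lemma~\ref{c2.5}. In the non-super predecessor \cite{BDM} the analogue of Proposition~\ref{psun1} is proved directly from the exponential definition of $\Delta_k$ and the Virasoro relations, independently of the lemma, so there is no circularity; the same is expected here, but it is worth verifying. Alternatively, both identities can be obtained by a direct computation from the definition (\ref{Delta-for-a-module}) of $\Delta_k(z)$, which is presumably closer to what is done in \cite{B-superpermutation-odd}.
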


\section{The setting of $(1 \; 2 \; \cdots k$)-twisted $V^{\otimes k}$-modules and the operators $Y_\sigma(\Delta_k(x)u, x^{1/k})$ for a $\sigma$-twisted $V$-module $(M_\sigma, Y_\sigma)$}\label{tensor-product-setting-section}
\setcounter{equation}{0}

Now we turn our attention to tensor product vertex operator superalgebras.
Let $V=(V,Y,{\bf 1},\omega)$ be a vertex operator superalgebra, and let $k$ be a
fixed positive integer.  Then by Remark \ref{VOSAs-tensor-remark},  $V^{\otimes k}$ is also a
vertex operator superalgebra, and the permutation group $S_k$
acts naturally on $\vtk$ as signed automorphisms.  That is
$(j \; j+1) \cdot (v_1 \otimes v_2 \otimes \cdots \otimes v_k) = (-1)^{|v_j||v_{j+1}|} (v_1 \otimes v_2 \otimes \cdots v_{j-1} \otimes v_{j+1} \otimes v_j \otimes v_{j+2} \otimes \cdots \otimes v_k)$, and we take this to be a right action so that, for instance
\begin{eqnarray}
\qquad (1 \; 2  \cdots  k) : V \otimes V \otimes \cdots \otimes V \! \! \! \! &\longrightarrow & \! \! \! \! V \otimes V \otimes \cdots \otimes V\\
v_1 \otimes v_2 \otimes \cdots \otimes v_k \! \! \! \! \! \! & \mapsto & \! \! \! \! \!  \! (-1)^{|v_1|(|v_2| + \cdots + |v_k|)} v_2 \otimes v_3 \otimes \cdots \otimes v_k \otimes v_1. \nonumber
\end{eqnarray}
(Note that in \cite{BDM}, this action was given as a left action.  For convenience, we make the change here to a right action as in \cite{BHL} and \cite{B-superpermutation-odd}).

Let $g=(1 \; 2 \; \cdots \; k)$.  In the next section, we will construct a functor $T_g^k$ {}from the category of weak $\sigma$-twisted $V$-modules to the category of weak $g$-twisted modules for $\vtk$ for the case when $k$ is even.  This construction will be based on the operators $Y_\sigma(\Delta_k(x)u, x^{1/k})$ for a parity-twisted $V$-module $(M_\sigma, Y_\sigma)$.  Thus in this section, we establish several properties of these operators.

For $v\in V$, and $k$ any positive integer, denote by $v^j\in\vtk$, for $j = 1,\dots, k$, the vector whose $j$-th tensor factor is $v$ and whose other tensor factors are ${\bf 1}$.  Then
$gv^j=v^{j-1}$ for $j=1,\dots,k$ where $0$ is understood to be $k$. 

Suppose that $W$ is a weak $g$-twisted $\vtk$-module, and let $\eta$ be a fixed primitive $k$-th root of unity.  We first make some general observations for this setting following \cite{BDM} and \cite{B-superpermutation-odd}.  First, it follows {}from the definition of twisted module (cf. (\ref{limit-axiom})) that
the $g$-twisted vertex operators on $W$ satisfy
\begin{equation}
Y_g(v^{j+1},x) =  Y_g(g^{-j} v^1,x) = \lim_{x^{1/k}\to \eta^{j}x^{1/k}}  Y_g(v^1,x).
\end{equation}
Since $\vtk$ is generated by $v^j$ for $v\in V$ and $j=1,...,k,$ 
the twisted vertex operators $Y_g(v^1,x)$ for $v\in V$ determine all
the twisted vertex operators $Y_g(u,x)$ on $W$ for any $u\in\vtk$. This
observation is very important in our construction of twisted modules.

Secondly, if $u,v\in V$ are of homogeneous sign, then by (\ref{twisted-Jacobi}) the twisted Jacobi identity for $Y_g(u^1,x_1)$ and $Y_g(v^1,x_2)$ is
\begin{multline}\label{k1}
x^{-1}_0\delta\left(\frac{x_1-x_2}{x_0}\right)
Y_g(u^1,x_1)Y_g(v^1,x_2)\\
-(-1)^{ |u||v|} x^{-1}_0\delta\left(\frac{x_2-x_1}{-x_0}\right)
Y_g(v^1,x_2)Y_g(u^1,x_1)\\
=\frac{1}{k}x_2^{-1}\sum_{j=0}^{k-1}\delta\Biggl(\eta^j\frac{(x_1-x_0)^
{1/k}}{x_2^{1/k}}\Biggr)Y_g(Y(g^ju^1,x_0)v^1,x_2).
\end{multline} 
Since $g^{-j}u^1=u^{j+1}$, we see that $Y(g^{-j}u^1,x_0)v^1$ only involves nonnegative integer powers of $x_0$ unless $j=0\ (\mod \; k).$   Thus the we have the supercommutator
\begin{multline}\label{k2}
[Y_g(u^1,x_1),Y_g(v^1,x_2)] \\
= \; \Res_{x_0}\frac{1}{k}x_2^{-1}\delta
\Biggl(\frac{(x_1-x_0)^{1/k}}{x_2^{1/k}}\Biggr)Y_g(Y(u^1,x_0)v^1,x_2) .
\end{multline}
This shows that the component operators of $Y_g(u^1,x)$ for $u\in V$ on
$W$ form a Lie superalgebra.

For $u\in V$ and $\D(z)$ given by (\ref{Delta-for-a-module}), and $(M_\sigma, Y_\sigma)$ a parity-twisted $V$-module, define
\begin{equation}
\bar{Y}_\sigma (u,x)=Y_\sigma (\D(x)u,x^{1/k}) .
\end{equation}
For example, taking $u=\omega$, and recalling that $a_2= (k^2-1)/12$, we have
\begin{eqnarray}\label{sun1}
\bY(\omega,x) &=&Y_\sigma \left(\frac{x^{2(1/k - 1)}}{k^2}\Bigl(\omega + a_2
\frac{c}{2}x^{-2/k} \Bigr),x^{1/k}\right) \\
&=&  \frac{x^{2(1/k - 1)}}{k^2}Y_\sigma (\omega,x^{1/k})
+\frac{(k^2-1)c}{24k^2}x^{-2} \nonumber
\end{eqnarray}
where $c$ is the central charge of $V$.

\begin{rem}\label{wrong-space-remark}{\em  Since 
$Y_\sigma (v, x) \in  x^{|v|/2} (\mathrm{End}\, M_\sigma) [[x, x^{-1}]]$,
and for $k$ even 
\begin{equation}
\D(x)u \in 
\left\{ \begin{array}{ll}
 V^{(0)} [[x^{1/k}, x^{-1/k}]] & \mbox{if $u$ is even}\\
\\
x^{1/2k} V^{(1)} [[x^{1/k}, x^{-1/k}]] & \mbox{ if $u$ is odd}
\end{array}
\right. ,
\end{equation}
we have that if $k$ is even
\begin{equation}
\bar{Y}_\sigma (u,x) = Y_\sigma (\D(x)u,x^{1/k}) \in (\mathrm{End}\, M_\sigma) [[x^{1/k}, x^{-1/k}]] .
\end{equation}
When we put a weak $g$-twisted $\vtk$-module structure on $M_\sigma$, this operator 
$\bar{Y}_\sigma (u,x)$ will be the twisted vertex operator acting on $M_\sigma$ associated to $u^1$, where we will assume $k$ is even.  Note however, that if $k$ is odd, then for $u$ odd in $V$, we have $Y_\sigma (\D(x)u,x^{1/k}) \in (\mathrm{End}\, M_\sigma) [[x^{1/2k}, x^{-1/2k}]]$.  This is a reflection of the $k$ odd case as constructed and classified in \cite{B-superpermutation-odd} being fundamentally different from the $k$ even case.  
}
\end{rem}

Next we study the properties of the operators $\bY(u,x)$, following and generalizing \cite{BDM} and \cite{B-superpermutation-odd}.

\begin{lem}\label{l3.1} For $u\in V$
\[\bY(L(-1)u,x)=\frac{d}{dx}\bY(u,x).\]
\end{lem}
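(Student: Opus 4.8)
The plan is to compute $\frac{d}{dx}\bY(u,x)$ directly from the definition $\bY(u,x) = Y_\sigma(\D(x)u, x^{1/k})$ and match it against $\bY(L(-1)u,x) = Y_\sigma(\D(x)L(-1)u, x^{1/k})$. Differentiating a composition of the form $Y_\sigma(A(x)u, x^{1/k})$ in the variable $x$ produces two terms by the chain/product rule: one term where the derivative hits the internal argument $\D(x)u$, giving $Y_\sigma\bigl((\frac{\partial}{\partial x}\D(x))u, x^{1/k}\bigr)$, and one term where the derivative hits the formal variable $x^{1/k}$ in the second slot, giving $\frac{1}{k}x^{1/k-1} (\frac{\partial}{\partial y}Y_\sigma)(\D(x)u, y)\big|_{y = x^{1/k}}$. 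For the second term I would use the $L(-1)$-derivative property of the $\sigma$-twisted vertex operators, namely $\frac{\partial}{\partial y}Y_\sigma(v,y) = Y_\sigma(L(-1)v, y)$ (valid since $\sigma$ fixes $\omega$, as noted in the discussion after the definition of weak $g$-twisted module), to rewrite it as $\frac{1}{k}x^{1/k-1}Y_\sigma(L(-1)\D(x)u, x^{1/k})$.

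So the two terms combine to
\[
\frac{d}{dx}\bY(u,x) = Y_\sigma\Bigl(\bigl(\tfrac{\partial}{\partial x}\D(x) + \tfrac{1}{k}x^{1/k-1}L(-1)\D(x)\bigr)u,\ x^{1/k}\Bigr).
\]
Now I invoke Lemma \ref{c2.5}, specifically identity (\ref{identity in voa}), which states $\Delta_k(z)L(-1) - \frac{1}{k}z^{1/k-1}L(-1)\Delta_k(z) = \frac{\partial}{\partial z}\Delta_k(z)$, i.e. $\frac{\partial}{\partial x}\D(x) + \frac{1}{k}x^{1/k-1}L(-1)\D(x) = \D(x)L(-1)$. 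Substituting this in gives exactly $Y_\sigma(\D(x)L(-1)u, x^{1/k}) = \bY(L(-1)u,x)$, as desired.

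The only genuinely delicate point is justifying the chain rule for $\frac{\partial}{\partial y}Y_\sigma(v,y)\big|_{y=x^{1/k}}$ with $y$ replaced by the formal expression $x^{1/k}$ — i.e., that formal differentiation in $x$ of $Y_\sigma(v, x^{1/k})$ indeed yields $\frac{1}{k}x^{1/k-1}$ times the $y$-derivative evaluated at $y = x^{1/k}$. This is a routine but slightly fussy formal-variable manipulation: one checks it monomial-by-monomial on $Y_\sigma(v,y) = \sum_n v_n^\sigma y^{-n-1}$, where it reduces to $\frac{d}{dx}x^{(-n-1)/k} = \frac{-n-1}{k}x^{(-n-1)/k - 1} = \frac{1}{k}x^{1/k-1}\cdot(-n-1)y^{-n-2}\big|_{y=x^{1/k}}$, and $(-n-1)y^{-n-2}$ is precisely the coefficient in $\frac{d}{dy}Y_\sigma(v,y)$. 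One must also check that everything lives in a well-defined space of formal series so that these term-by-term operations are legitimate; Remark \ref{wrong-space-remark} (for $k$ even) and the general structure of $\D(x)$ ensure $\bY(u,x)$ is a well-defined element of $(\End\,M_\sigma)[[x^{1/k}, x^{-1/k}]]$, but this lemma does not actually require $k$ even — it holds for all $k$, exactly as in the analogous lemmas of \cite{BDM} and \cite{B-superpermutation-odd}, so I would not impose that restriction here.
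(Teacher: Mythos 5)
Your proof is correct and uses exactly the same ingredients as the paper's: Lemma \ref{c2.5} (identity (\ref{identity in voa})), the $L^\sigma(-1)$-derivative property of $Y_\sigma$, and the formal chain rule for differentiating in $x$ through the composition $Y_\sigma(\D(x)u,x^{1/k})$. The only difference is the direction of the computation (you start from $\frac{d}{dx}\bY(u,x)$ and work toward $\bY(L(-1)u,x)$, while the paper goes the other way), and you are right that no parity restriction on $k$ is needed here.
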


\begin{proof} By Lemma \ref{c2.5},  and the $L^\sigma(-1)$-derivative property for $(M_\sigma, Y_\sigma)$, we have
\begin{eqnarray*}
\bY(L(-1)u,x) &=& Y_\sigma(\D(x)L(-1)u,x^{1/k})\\
&=& Y_\sigma(\frac{d}{dx}\D(x)u,x^{1/k}) + k^{-1} x^{1/k - 1}Y_\sigma(L(-1)\D(x)u,x^{1/k})\\
&=& Y_\sigma(\frac{d}{dx}\D(z)u,x^{1/k}) + \left. k^{-1} x^{1/k - 1}\frac{d}{dy}
Y_\sigma(\D(x)u,y) \right|_{y=x^{1/k}}\\
&=& Y_\sigma(\frac{d}{dx}\D(x)u,x^{1/k}) + \left.\frac{d}{dy}Y_\sigma(\D(x)u,y^{1/k})
\right|_{y=x}\\
&=& \frac{d}{dx}Y_\sigma(\D(x)u,x^{1/k})\\
&=& \frac{d}{dx}\bY(u,x)
\end{eqnarray*}
as desired. \end{proof}

\begin{lem}\label{l3.2} For $u,v\in V$ of homogeneous sign, we have the supercommutator
\begin{multline}\label{first-supercommutator}
 [\bY(u,x_1),\bY(v,x_2)] \\
= \left\{ \begin{array}{ll}
\Res_{x_0}\frac{x_2^{-1}}{k}
\delta\Biggl(\frac{(x_1-x_0)^{1/k}}{x_2^{1/k}}\Biggr)\bY(Y(u,x_0)v,x_2) & \mbox{if $k$ is even}\\
\Res_{x_0} \frac{x_2^{-1}}{k}
\delta\Biggl(\frac{(x_1-x_0)^{1/k}}{x_2^{1/k}}\Biggr)\bY(Y(u,x_0)v,x_2) \left( \frac{x_1 - x_0}{x_2} \right)^{ \frac{|u|}{2k}} & \mbox{if $k$ is odd}
\end{array} . \right.
\end{multline}
\end{lem}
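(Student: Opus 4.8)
The plan is to compute the supercommutator $[\bY(u,x_1),\bY(v,x_2)]$ by unwinding the definition $\bY(u,x) = Y_\sigma(\D(x)u,x^{1/k})$ and using the ordinary (parity-)twisted supercommutator formula for $Y_\sigma$ together with the conjugation identity of Proposition \ref{psun1}. First I would write out
\[
[\bY(u,x_1),\bY(v,x_2)] = [Y_\sigma(\D(x_1)u,x_1^{1/k}),\, Y_\sigma(\D(x_2)v,x_2^{1/k})],
\]
and apply the supercommutator consequence of the twisted Jacobi identity \eqref{twisted-Jacobi} for $(M_\sigma,Y_\sigma)$, i.e. take $\Res_{y_0}$ of the right-hand side of the $\sigma$-twisted Jacobi identity with $x_1 \to x_1^{1/k}$, $x_2 \to x_2^{1/k}$. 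Since $\sigma^2 = 1$, the sum over $\mathbb{Z}/2\mathbb{Z}$ collapses the $\delta$-function part, and one gets a supercommutator expressed in terms of $Y_\sigma(Y(\D(x_1)u,y_0)\D(x_2)v,\, x_2^{1/k})$ (with the appropriate $\delta$-functions in $x_1^{1/k},x_2^{1/k},y_0$).

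The key step is then to recognize the inner expression $Y(\D(x_1)u,y_0)\D(x_2)v$ as coming from $\D$ applied to $Y(u,x_0)v$. This is exactly what Proposition \ref{psun1} provides: substituting $z = x_2$ and matching $y_0$ with $(x_2+x_0)^{1/k} - x_2^{1/k}$ (so that $x_1 = x_2 + x_0$ on the support of the relevant $\delta$-function), Proposition \ref{psun1} gives
\[
\D(x_2)Y(u,x_0)\D(x_2)^{-1} = Y(\D(x_2+x_0)u,\,(x_2+x_0)^{1/k}-x_2^{1/k}),
\]
which, after multiplying through by $\D(x_2)$ on the right and feeding in $\D(x_2)v$, converts the inner vertex operator into $\D(x_2)Y(u,x_0)v$. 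Hence the inner term becomes $Y_\sigma(\D(x_2)Y(u,x_0)v, x_2^{1/k}) = \bY(Y(u,x_0)v,x_2)$, and one is left to perform the change of variables in the $\delta$-functions, using the $\delta$-function identities \eqref{delta-function1}–\eqref{delta-function3} to rewrite everything in terms of $x_2^{-1}\delta\big((x_1-x_0)^{1/k}/x_2^{1/k}\big)$ and then taking $\Res_{x_0}$.

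The main obstacle — and the place where the dichotomy between $k$ even and $k$ odd enters — is bookkeeping the fractional powers of $x_1$ and $x_2$ carefully, in particular the sign factor $(-1)^{|u||v|}$ and the factor $x_1^{|v|/2k}$-type corrections coming from the fact that $Y_\sigma(w,y) \in y^{|w|/2}(\End M_\sigma)[[y,y^{-1}]]$. When $u$ is odd, $\D(x_1)u$ lies in $x_1^{1/2k}V^{(1)}[[x_1^{1/k},x_1^{-1/k}]]$ (see Remark \ref{wrong-space-remark}), so $\bY(u,x_1)$ picks up the extra half-integral grading; tracking how this half-power redistributes under the substitution $x_1 \to x_2 + x_0$ is what produces the correction factor $\big((x_1-x_0)/x_2\big)^{|u|/2k}$ in the $k$ odd case. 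When $k$ is even, the relevant powers are genuine powers of the variables (no branch ambiguity relative to the $k$-th root structure of the twisted operators on $M_\sigma$), the parity-twisting of $M_\sigma$ exactly absorbs the odd part, and the correction factor collapses to $1$. I would handle this by treating the cases $u$ even and $u$ odd separately in the delta-function manipulation, and in the odd case keeping explicit track of the expansion $\big((x_2+x_0)^{1/k}/x_1^{1/k}\big)$-type terms via \eqref{delta-function1} with $r = |u|/2 \pmod{\ldots}$ until the stated formula emerges.
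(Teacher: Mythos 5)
Your proposal follows the same route as the paper's proof: unwind $\bY$ to $Y_\sigma(\D(\cdot)\,\cdot,\,\cdot^{1/k})$, apply the $\sigma$-twisted supercommutator formula (the $\Res_{x_0}$ of the eigenspace form of the twisted Jacobi identity for $\sigma$ of order $2$), make the change of variable $x \mapsto x_1^{1/k}-(x_1-x_0)^{1/k}$ in the residue, invoke Proposition~\ref{psun1} to recognize $Y(\D(x_2+x_0)u,(x_2+x_0)^{1/k}-x_2^{1/k})\D(x_2)v$ as $\D(x_2)Y(u,x_0)v$, and clean up with the $\delta$-function identities; so the proposal is essentially correct and matches the paper's argument.

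One small point to sharpen in your accounting of the even/odd dichotomy: you cite Remark~\ref{wrong-space-remark} for the statement $\D(x_1)u \in x_1^{1/2k}V^{(1)}[[x_1^{1/k},x_1^{-1/k}]]$ when $u$ is odd and then use it to explain the extra factor in the $k$ \emph{odd} case, but that containment holds precisely when $k$ is \emph{even} (for $k$ odd, $\D(x_1)u$ has only integer powers of $x_1^{1/k}$). The correct bookkeeping is the other way around: for $k$ even the inner expression $Y_\sigma\bigl(Y(\D(x_1)u,\ldots)\D(x_2)v,\,x_2^{1/k}\bigr)$ carries the factor $x_1^{|u|/2k}$, which together with the prefactor $\bigl((x_2+x_0)/x_1\bigr)^{|u|/2k}$ makes the total $x_1$-dependence integral in $x_1^{1/k}$, so the $\delta$-substitution $x_1\mapsto x_2+x_0$ absorbs the prefactor and the correction vanishes; for $k$ odd the inner expression has no such $x_1^{|u|/2k}$ factor, the prefactor survives, and rewriting it with \eqref{delta-function3} produces the $\bigl((x_1-x_0)/x_2\bigr)^{|u|/2k}$ correction. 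With that fix the argument is exactly the one in the paper.
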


\begin{proof} The supercommutator formula for the weak $\sigma$-twisted $V$-module $M_\sigma$ is given by 
\begin{multline}\label{commutator for Lemma 3.3}
[Y_\sigma(u,x_1),Y_\sigma(v,x_2)]  \\
= \Res_{x}x_2^{-1}\left(\frac{x_1-x}{x_2}
\right)^{-|u|/2} \delta\left(\frac{x_1-x}{x_2}
\right)Y_\sigma(Y(u,x)v,x_2)
\end{multline}
which is a consequence of the twisted Jacobi identity on $M_\sigma$, for $u$ of homogeneous sign (parity) in $V$.  Replacing $Y_\sigma(u,x_1)$ and $Y_\sigma (v,x_2)$ by $Y_\sigma (\D(x_1)u,x_1^{1/k})$ and 
$Y_\sigma(\D(x_2)v,x_2^{1/k})$, respectively, in the supercommutator
formula, we have the supercommutator
\begin{multline}\label{substitution equation}
[\bar{Y}_\sigma(u,x_1),\bar{Y}_\sigma(v,x_2)]  \\
= \Res_{x}x_2^{-1/k} \Biggl( \frac{x_1^{1/k}-x}{x_2^{1/k}}\Biggr)^{-|u|/2}
\delta\Biggl( \frac{x_1^{1/k}-x}{x_2^{1/k}}\Biggr)
Y_\sigma (Y(\D(x_1)u,x)\D(x_2)v,x_2^{1/k}) .
\end{multline}

We want to make the change of variable $x = x_1^{1/k}-(x_1-x_0)^{1/k}$
where by $x_1^{1/k}-(x_1-x_0)^{1/k}$ we mean the power series expansion
in positive powers of $x_0$.  For $n \in \mathbb{Z}$, it was shown in \cite{BDM} that
\begin{equation}
\left. (x_1^{1/k} - x)^n \right|_{x = x_1^{1/k}-(x_1-x_0)^{1/k}} = (x_1 - x_0)^{n/k}.
\end{equation}

Thus substituting $x =  x_1^{1/k}-(x_1-x_0)^{1/k}$ into 
\[ x_2^{-1/k} \Biggl( \frac{x_1^{1/k}-x}{x_2^{1/k}}\Biggr)^{-|u|/2}
\delta\Biggl( \frac{x_1^{1/k}-x}{x_2^{1/k}}\Biggr)
Y_\sigma (Y(\D(x_1)u,x)\D(x_2)v,x_2^{1/k}) ,\]
we have a well-defined power series given by
\begin{multline*}
x_2^{-1/k} \Biggl(\frac{(x_1-x_0)^{1/k}}{x_2^{1/k}}\Biggr)^{-|u|/2} \delta\Biggl(\frac{(x_1-x_0)^{1/k}}{x_2^{1/k}}\Biggr)\\
 Y_\sigma (Y(\D(x_1)u, x_1^{1/k}-(x_1-x_0)^{1/k})\D(x_2)v,x_2^{1/k}) .
\end{multline*}

Let $f(z_1,z_2,x)$ be a complex analytic function in $z_1, z_2$, and
$x$, and let $h(z_1,z_2,z_0)$ be a complex analytic function in $z_1,
z_2$, and $z_0$. Then if $f(z_1,z_2,h(z_1,z_2,z_0))$ is well defined,
and thinking of $z_1$ and $z_2$ as fixed ( i.e., considering 
$f(z_1,z_2,h(z_1,z_2,z_0))$ as a Laurent series in $z_0$) by the
residue theorem of complex analysis, we have
\begin{equation}\label{residue change of variables}
\Res_x f(z_1,z_2,x)=\Res_{z_0} \left( \frac{\partial}{\partial z_0}
h(z_1,z_2,z_0) \right)f(z_1,z_2,h(z_1,z_2,z_0)) .
\end{equation} 
This course remains true for $f$ and $h$ formal power series in
their respective variables.  Thus making the change of variable
$x= h(x_1,x_2,x_0) = x_1^{1/k}-(x_1-x_0)^{1/k}$, using (\ref{substitution
equation}), (\ref{residue change of variables}), and the 
$\delta$-function identity (\ref{delta-function3}), we obtain
\begin{eqnarray*}
\lefteqn{ [\bY(u,x_1),\bY(v,x_2)] = }\\  
&=& \Res_{x_0}\frac{1}{k}x_2^{-1/k} (x_1-x_0)^{1/k-1} \Biggl(\frac{(x_1-x_0)^{1/k}}{x_2^{1/k}}\Biggr)^{-|u|/2} \delta\Biggl(
\frac{(x_1-x_0)^{1/k}}{x_2^{1/k}}\Biggr) \\
& & \quad Y_\sigma (Y(\D(x_1)u,x_1^{1/k}-(x_1-x_0)^{1/k})
\D(x_2)v,x_2^{1/k})\\
&=& \Res_{x_0}\frac{1}{k} x_2^{-1} \Biggl(\frac{x_1-x_0}{x_2}\Biggr)^{-|u|/2k}   \delta\Biggl(
\frac{(x_1-x_0)^{1/k}}{x_2^{1/k}}\Biggr)  \\
& & \quad 
Y_\sigma (Y(\D(x_1)u,x_1^{1/k}-(x_1-x_0)^{1/k})\D(x_2)v,x_2^{1/k})\\
&=&  \Res_{x_0}\frac{1}{k}x_1^{-1} \Biggl(\frac{x_2+x_0}{x_1}\Biggr)^{|u|/2k}  
\delta\Biggl(
\frac{(x_2+x_0)^{1/k}}{x_1^{1/k}}\Biggr) \\
& & \quad 
Y_\sigma (Y(\D(x_1)u,x_1^{1/k}-(x_1-x_0)^{1/k})\D(x_2)v,x_2^{1/k}).
\end{eqnarray*}
Now we observe that  
\begin{multline}
Y_\sigma (Y(\D(x_1)u,x_1^{1/k}-(x_1-x_0)^{1/k})\D(x_2)v,x_2^{1/k}) \\
\in 
\left\{ \begin{array}{ll}
x_1^{|u|/2k} (\mathrm{End} \, M_\sigma)  [[x_0]] [[x_1^{1/k}, x_1^{-1/k}]][[x_2^{1/2k}, x_2^{-1/2k}]] & \mbox{if $k$ is even}\\
\\
(\mathrm{End} \, M_\sigma)  [[x_0]] [[x_1^{1/k}, x_1^{-1/k}]][[x_2^{1/2k}, x_2^{-1/2k}]] & \mbox{if $k$ is odd} 
\end{array}
\right. .
\end{multline}
Thus letting $p(k) = 0$ if $k$ is even and $p(k) = 1$ if $k$ is odd, using the $\delta$-function substitution property (see e.g., \cite{LL}) and Proposition \ref{psun1}, we obtain
\begin{eqnarray*}
\lefteqn{ [\bY(u,x_1),\bY(v,x_2)] = }\\  
&=& \Res_{x_0}\frac{1}{k}x_1^{-1}  \Biggl(\frac{x_2+x_0}{x_1}\Biggr)^{p(k) |u|/2k}  \delta\Biggl(
\frac{(x_2+x_0)^{1/k}}{x_1^{1/k}}\Biggr)   \\
& & \quad  Y_\sigma (Y(\D(x_2+x_0)u,(x_2+x_0)^{1/k}-x_2^{1/k})
\D(x_2)v,x_2^{1/k})\\
&=& \Res_{x_0}\frac{1}{k}x_2^{-1} \Biggl(\frac{x_1-x_0}{x_2}\Biggr)^{-p(k) |u|/2k}  \delta\Biggl(
\frac{(x_1-x_0)^{1/k}}{x_2^{1/k}}\Biggr)  \\
& & \quad Y_\sigma (Y(\D(x_2+x_0)u,(x_2+x_0)^{1/k}-x_2^{1/k})
\D(x_2)v,x_2^{1/k})\\
&=&  \Res_{x_0}\frac{1}{k}x_2^{-1} \Biggl(\frac{x_1-x_0}{x_2}\Biggr)^{-p(k) |u|/2k} \delta\Biggl(\frac{(x_1-x_0)^
{1/k}}{x_2^{1/k}}\Biggr)\\
& & \quad Y_\sigma (\D(x_2)Y(u,x_0)v,x_2^{1/k}) \\
&=& \Res_{x_0}\frac{1}{k}x_2^{-1} \Biggl(\frac{x_1-x_0}{x_2}\Biggr)^{-p(k) |u|/2k} \delta\Biggl(\frac{(x_1-x_0)^
{1/k}}{x_2^{1/k}}\Biggr)\bY(Y(u,x_0)v,x_2),
\end{eqnarray*}
giving (\ref{first-supercommutator}). 
\end{proof}

\section{The construction of a weak $(1 \; 2 \; \cdots \; k$)-twisted $V^{\otimes k}$-module structure on a weak $\sigma$-twisted $V$-module $(M_\sigma, Y_\sigma)$ for $k$ even}\label{tensor-product-twisted-construction-section}
\setcounter{equation}{0}

Let $M_\sigma =(M_\sigma,Y_\sigma)$ be a weak $\sigma$-twisted $V$-module. Now we begin our construction of a weak $g$-twisted $V^{\otimes k}$-module structure on $M_\sigma$ when $k$ is an even positive integer and $g = (1 \; 2\; \cdots \; k)$.  Since establishing the properties of $\Delta_k(x)$ as in Section \ref{Delta-section} in the super case and proving the supercommutator (\ref{first-supercommutator}), our construction of a weak $g$-twisted $V^{\otimes k}$-module structure on $M_\sigma$ in the case when $k$ is even follows the same spirit of the construction as in \cite{BDM} and \cite{B-superpermutation-odd}, but now with careful modifications for the fact that we are working with the $\sigma$-twisted vertex operators on the weak $\sigma$-twisted module $M_\sigma$ rather than just untwisted vertex operators on an untwisted weak module as in \cite{BDM} and \cite{B-superpermutation-odd}.

That is, for $k$ even, we construct these weak $g$-twisted $V^{\otimes k}$-modules
by first defining $g$-twisted vertex operators on a weak parity-twisted
$V$-module $M_\sigma$ for a set of generators which are mutually local (see \cite{Li2}).  These $g$-twisted vertex operators generate a local system which is a vertex superalgebra.  We then construct a homomorphism of vertex superalgebras {}from $V^{\otimes k}$ to this local system which thus gives a weak $g$-twisted $V^{\otimes k}$-module structure on $M_\sigma$.

For $u\in V$, and $j = 0, \dots, k-1$, set
\begin{equation}  \label{define-g-twist} 
Y_g(u^1,x) = \bY(u,x)\quad \mbox{and}   \quad
Y_g(u^{j+1},x) = \lim_{x^{1/k}\to \eta^{j} x^{1/k}}  Y_g(u^1,x)  . 
\end{equation}

\begin{rem}\label{generalized-remark}{\em
From the supercommutator (\ref{first-supercommutator}) for $\bar{Y}_\sigma$, we see that defining $g$-twisted operators as above for the case when $k$ is odd, can not result in a twisted module structure on $M_\sigma$ due to appearance of the extra term involving $(x_2^{-1} (x_1 - x_0))^{|u|/2k}$. This parallels the obstruction as discussed in \cite{B-superpermutation-odd} in trying to put a $g$-twisted $V^{\otimes k}$-module structure on an untwisted weak $V$-module if $k$ is even.  That is, we get the same obstruction term in that case---see \cite{B-superpermutation-odd}, Lemma 4.3 and Remarks 4.1 and 5.1.
}
\end{rem}

Note that $Y_g(u^j,x)=\sum_{p=0}^{k-1}Y_g^p(u^j,x)$
where $Y_g^p(u^j,x)=\sum_{n\in \frac{p}{k} + \Z}u^j_nx^{-n-1}$.

\begin{lem}\label{l3.3} Let $u,v\in V$ of homogeneous sign.  Then we have the supercommutator
\begin{multline}\label{3.1}
[Y_g(u^j,x_1),Y_g(v^m,x_2)] \\
= \; \Res_{x_0}\frac{1}{k}x_2^{-1}\delta
\Biggl(\frac{\eta^{j-m}(x_1-x_0)^{1/k}}{x_2^{1/k}}\Biggr)Y_g((Y(u,x_0)v)^m,x_2) 
\end{multline}
where $(Y(u,x_0)v)^m=\sum_{n\in\Z}(u_nv)^m x_0^{-n-1},$
and
\begin{multline}\label{3.2}
[Y_g^p(u^j,x_1),Y_g(v^m,x_2)] \\
= \Res_{x_0}\frac{1}{k}x_2^{-1}\eta^{(m-j)p}\left(\frac{x_1-x_0}
{x_2}\right)^{-p/k}\delta\left(\frac{x_1-x_0}
{x_2}\right)Y_g((Y(u,x_0)v)^m,x_2).
\end{multline}
\end{lem}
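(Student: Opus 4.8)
The plan is to bootstrap both identities from the supercommutator formula for $\bar Y_\sigma$ established in Lemma \ref{l3.2}, using the defining relations (\ref{define-g-twist}) that obtain $Y_g(u^{j+1},x)$ from $Y_g(u^1,x) = \bar Y_\sigma(u,x)$ by the substitution $x^{1/k} \to \eta^j x^{1/k}$. Since $k$ is even throughout this section, Lemma \ref{l3.2} gives
\[
[\bar Y_\sigma(u,x_1),\bar Y_\sigma(v,x_2)]
= \Res_{x_0}\frac{x_2^{-1}}{k}\,\delta\!\Biggl(\frac{(x_1-x_0)^{1/k}}{x_2^{1/k}}\Biggr)\bar Y_\sigma(Y(u,x_0)v,x_2),
\]
i.e.\ exactly (\ref{3.1}) in the case $j = m = 1$ (identifying $\bar Y_\sigma(w,x)$ with $Y_g(w^1,x)$ and noting $\eta^{j-m} = \eta^0 = 1$). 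The first step is to derive the general $j,m$ version of (\ref{3.1}). I would apply the formal substitutions $x_1^{1/k}\to\eta^{\,j'}x_1^{1/k}$ and $x_2^{1/k}\to\eta^{\,m'}x_2^{1/k}$ to the $j=m=1$ identity, being careful that these substitutions act on every occurrence of $x_1^{1/k}$ and $x_2^{1/k}$, including those hidden inside $(x_1-x_0)^{1/k}$ expanded in nonnegative powers of $x_0$ — here one uses, as in \cite{BDM} and Lemma \ref{l3.2}, that $(x_1-x_0)^{n/k}$ behaves correctly under $x_1^{1/k}\mapsto \eta^{j'} x_1^{1/k}$, and that the right-hand side $\bar Y_\sigma(Y(u,x_0)v,x_2)$ involves only integral powers of $x_2$ coming from $Y(u,x_0)v$ after applying $\Delta_k(x_2)$ only when paired correctly — so the $x_2^{1/k}$ substitution only affects the $\delta$-function argument. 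Matching the resulting powers of $\eta$ yields the factor $\eta^{j-m}$ inside the $\delta$-function and (\ref{3.1}) for all $j,m \in \{1,\dots,k\}$, the indices reduced mod $k$; one then invokes (\ref{limit-axiom})/(\ref{define-g-twist}) to identify the substituted operators with $Y_g(u^j,x_1)$, $Y_g(v^m,x_2)$, and $Y_g((Y(u,x_0)v)^m,x_2)$.

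The second step is to extract (\ref{3.2}) from (\ref{3.1}) by projecting onto the $p$-th ``eigen-component'' $Y_g^p(u^j,x_1) = \sum_{n\in p/k+\Z} u^j_n x_1^{-n-1}$. Concretely, $Y_g^p(u^j,x) = \frac{1}{k}\sum_{\ell=0}^{k-1}\eta^{-\ell p}\,\lim_{x^{1/k}\to\eta^\ell x^{1/k}} Y_g(u^j,x)$, so taking the supercommutator of this average against $Y_g(v^m,x_2)$ and using (\ref{3.1}) termwise replaces $\eta^{j-m}$ by $\eta^{j+\ell-m}$ in the $\delta$-function. Then I would expand the $k$-th-root $\delta$-function via (\ref{delta-function2}),
\[
x_2^{-1}\delta\!\Biggl(\frac{\eta^{j+\ell-m}(x_1-x_0)^{1/k}}{x_2^{1/k}}\Biggr)
= \sum_{q=0}^{k-1} \eta^{-q(j+\ell-m)}\left(\frac{x_1-x_0}{x_2}\right)^{q/k} x_2^{-1}\delta\!\left(\frac{x_1-x_0}{x_2}\right),
\]
substitute into the averaged sum, and interchange the sums over $\ell$ and $q$. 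The $\ell$-sum $\frac{1}{k}\sum_{\ell=0}^{k-1}\eta^{\ell(m-j-q)}\cdots$ — wait, I must track this: collecting the $\eta$-powers gives $\frac{1}{k}\sum_\ell \eta^{-\ell p}\eta^{-q\ell} = \frac{1}{k}\sum_\ell \eta^{-\ell(p+q)}$, which is $1$ if $q \equiv -p \pmod k$ and $0$ otherwise. Setting $q = k-p$ (or $q=0$ if $p=0$) collapses the sum and leaves precisely the factor $\eta^{(m-j)p}\left(\frac{x_1-x_0}{x_2}\right)^{-p/k}$ (using $(x_1-x_0)^{(k-p)/k} = (x_1-x_0)\cdot(x_1-x_0)^{-p/k}$ and absorbing the extra $(x_1-x_0)$ into the $x_0$-residue against $Y_g((Y(u,x_0)v)^m,x_2)$ by re-indexing), yielding (\ref{3.2}).

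The main obstacle I anticipate is bookkeeping the formal substitutions $x_i^{1/k}\to\eta^{\cdot}x_i^{1/k}$ rigorously: one must confirm these are well-defined operations on the relevant spaces of formal series (the operators live in $(\End M_\sigma)[[x_1^{1/k},x_1^{-1/k}]]\cdots$, and the $\delta$-function arguments involve $(x_1-x_0)^{1/k}$ expanded about $x_0=0$), and that they commute with $\Res_{x_0}$ and with taking supercommutators. This is exactly the kind of step that was handled in \cite{BDM} and \cite{B-superpermutation-odd} in the nonsuper and odd-order settings, and the parity signs $(-1)^{|u||v|}$ ride along untouched because $\Delta_k(x)$ and the $\eta$-substitutions are parity-even operations; so the argument should go through mutatis mutandis, with the $k$-even hypothesis entering solely through Lemma \ref{l3.2} (no anomalous $(x_2^{-1}(x_1-x_0))^{|u|/2k}$ term). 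A secondary subtlety is making sure the projection formula for $Y_g^p$ in terms of $\eta$-substitutions is applied only to the first slot and that the second-slot operator $Y_g(v^m,x_2)$ and the output $Y_g((Y(u,x_0)v)^m,x_2)$ are genuinely independent of the averaging index $\ell$, which they are since they only depend on $x_2$.
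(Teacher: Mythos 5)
Your overall route matches the paper's: the $j=m=1$ case is Lemma \ref{l3.2} (with $k$ even so the obstruction factor vanishes), (\ref{3.1}) for general $j,m$ follows by the root-of-unity substitutions encoded in (\ref{define-g-twist}) (note $x_2^{-1}=(x_2^{1/k})^{-k}$ is fixed under $x_2^{1/k}\mapsto\eta^{m-1}x_2^{1/k}$, while the ratio in the $\delta$-function picks up $\eta^{j-m}$), and (\ref{3.2}) is then extracted from (\ref{3.1}). This outline is exactly what the paper does.

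However, your derivation of (\ref{3.2}) contains two sign errors that do not cancel in the final answer. Since $\lim_{x^{1/k}\to\eta^\ell x^{1/k}}Y_g^q(u^j,x)=\eta^{-\ell q}Y_g^q(u^j,x)$, the projection formula must read $Y_g^p(u^j,x)=\frac{1}{k}\sum_{\ell=0}^{k-1}\eta^{+\ell p}\lim_{x^{1/k}\to\eta^\ell x^{1/k}}Y_g(u^j,x)$, not $\eta^{-\ell p}$ (your coefficient extracts $Y_g^{k-p}$). Likewise, the correct expansion of the shifted $\delta$-function is
\[
\delta\!\left(\frac{\eta^{a}(x_1-x_0)^{1/k}}{x_2^{1/k}}\right)
=\sum_{q=0}^{k-1}\eta^{+aq}\left(\frac{x_1-x_0}{x_2}\right)^{q/k}\delta\!\left(\frac{x_1-x_0}{x_2}\right),
\]
with $\eta^{+aq}$, not $\eta^{-aq}$ (set $a=0$ to recover (\ref{delta-function2})). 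With your signs the $\ell$-sum still forces $q\equiv -p\pmod k$, but the surviving $\ell$-independent factor is $\eta^{-q(j-m)}\big|_{q=k-p}=\eta^{p(j-m)}=\eta^{-(m-j)p}$, the reciprocal of what (\ref{3.2}) requires; with both signs corrected one gets $\eta^{q(j-m)}\big|_{q=k-p}=\eta^{-p(j-m)}=\eta^{(m-j)p}$ as needed. Also, the passage from $\left(\frac{x_1-x_0}{x_2}\right)^{(k-p)/k}$ to $\left(\frac{x_1-x_0}{x_2}\right)^{-p/k}$ should just invoke $y\,\delta(y)=\delta(y)$, not a ``re-indexing of the residue,'' which is not a clean manipulation here. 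Finally, the averaging over $\ell$ is unnecessary: since (\ref{3.1}) already has fixed $j,m$, you can expand its $\delta$-function once and observe that the $q=k-p$ (resp.\ $q=0$ when $p=0$) term is precisely the component of the commutator whose $x_1$-exponents lie in $-p/k+\Z$, which is (\ref{3.2}) directly.
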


\begin{proof} By Lemma \ref{l3.2}, equation (\ref{3.1}) holds if $j=m =1$ and $k$ is even.
Then using (\ref{define-g-twist}), we obtain equation (\ref{3.1})
for any $j,m = 1,...,k$. Equation (\ref{3.2}) is a direct consequence of
(\ref{3.1}). \end{proof}

%NOTE:  THERE IS A TYPO HERE IN BDM!!!! In Equation (3.15), the exponent of $\eta$ should
%be $(i-j)p$ not $(j-i)p$.

By Lemma \ref{l3.3} for $u,v\in V$ of homogeneous sign, there exists a positive integer
$N$ such that 
\begin{equation}\label{a3.3}
(x_1-x_2)^N [Y_g(u^j,x_1), Y_g(v^m,x_2)]=0.
\end{equation}
Taking the limit $x^{1/k} \longrightarrow \eta^{j-1} x^{1/k}$
in Lemma \ref{l3.1}, for $j=1,\dots,k$, we have
\begin{equation}
Y_g(L(-1)u^j,x)=\frac{d}{dx}Y_g(u^j,x).
\end{equation} 
Thus the operators $Y_g(u^j,x)$ for $u \in V$, and for $j=1,\dots,k$, are mutually local and generate a local system $A$ of weak twisted vertex operators on $(M_\sigma , L(-1))$ in the sense of \cite{Li2}. 

Let $\rho$ be a map {}from $A$ to $A$ such that $\rho Y_g(u^j,x)=Y_g(u^{j-1},x)$ for
$u\in V$ and $j=1,...,k$.  By Theorem 3.14 of \cite{Li2}\footnote{There is a
typo in the statement of Theorem 3.14 in \cite{Li2}.  The $V$ in the theorem
should be $A$.  That is, the main result of the theorem is that the
local system $A$ of the theorem has the structure of a vertex
superalgebra.}, the local system $A$ generates a vertex superalgebra we
denote by $(A,Y_A)$, and $\rho$ extends to an automorphism of $A$ of
order $k$ such that $M_\sigma$ is a natural weak generalized $\rho$-twisted $A$-module
in the sense that $Y_A(\alpha(x),x_1)=\alpha(x_1)$ for $\alpha(x)\in A$
are $\rho$-twisted vertex operators on $M_\sigma$.

\begin{rem} {\em $\rho$ is given by 
\begin{equation}
\rho a(x) =  \lim_{x^{1/k} \to \eta^{-1}x^{1/k}}  a(x) 
\end{equation}
for $a(x)\in A$; see \cite{Li2}.}
\end{rem}

Let $A^j=\{c(x)\in A| \rho c(x)=\eta^j c(x)\}$ and $a(x)\in A^j$ of homogeneous sign in $A$.
For any integer $n$ and $b(x)\in A$ of homogeneous sign, the operator $a(x)_{n}b(x)$ is an
element of $A$ given by
\begin{eqnarray}\label{3.3}
a(x)_{n}b(x)={\rm Res}_{x_{1}}{\rm
Res}_{x_{0}}\left(\frac{x_{1}-x_{0}}{x}\right)^{j/k}x_{0}^{n}\cdot X
\end{eqnarray}
where 
\[ X=x_{0}^{-1}\delta\left(\frac{x_{1}-x}{x_{0}}\right)a(x_{1})
b(x)- (-1)^{|a||b|} x_{0}^{-1}
\delta\left(\frac{x-x_{1}}{-x_{0}}\right)b(x)a(x_{1}).\]
Or, equivalently, $a(x)_{n}b(x)$ is defined by:
\begin{eqnarray}\label{3.4}
\sum_{n\in \mathbb{Z}}\left(a(x)_{n}b(x)\right)x_{0}^{-n-1}
={\rm Res}_{x_{1}}\left(\frac{x_{1}-x_{0}}{x}\right)^{j/k}
\cdot X.
\end{eqnarray}
Thus following \cite{Li2}, for $a(z)\in A^j$, we define $Y_A(a(z),x)$ 
by setting $Y_A(a(z), x_0)b(z)$ equal to (\ref{3.4}).
  
\begin{lem}\label{l3.5} For $u, v \in V$ of homogeneous sign, we have the supercommutator
\[ [Y_A(Y_g(u^j,x),x_1), Y_A(Y_g(v^m,x),x_2)]=0\]
for $j,m = 1,\dots, k$, with $j \neq m$.
\end{lem}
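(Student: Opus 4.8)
The plan is to show that the operators $Y_A(Y_g(u^j,x),x_1)$ and $Y_A(Y_g(v^m,x),x_2)$ commute (in the super sense) when $j\neq m$, by using the basic structure of the local system $A$ and the eigenspace decomposition $A=\coprod_j A^j$. First I would note that, by construction, $Y_g(u^j,x)\in A^{j}$ (in the eigenspace notation, where $\rho$ has eigenvalue $\eta^j$ on $A^j$; more precisely $\rho Y_g(u^j,x)=Y_g(u^{j-1},x)=\eta^{\,?}Y_g(u^j,x)$ once we pass to the appropriate eigenvector combinations), and I would first reduce the general statement to a statement about component operators using the definition (\ref{3.3})--(\ref{3.4}) of $Y_A$.

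The key computational step is to apply the commutator formula for the vertex superalgebra $(A, Y_A)$, which for $a(x)\in A^j$ and $b(x)\in A$ takes the twisted-Jacobi form analogous to (\ref{twisted-Jacobi}) with $\rho$ in place of $g$. Taking residues in $x_0$ as usual, the supercommutator $[Y_A(a(x),x_1),Y_A(b(x),x_2)]$ is expressed in terms of $Y_A\big((a(x)_n b(x)),x_2\big)$ for $n\ge 0$. So the crux is to show that for $a(x)=Y_g(u^j,x)\in A^j$ and $b(x)=Y_g(v^m,x)\in A^m$ with $j\neq m$, we have $a(x)_n b(x) = 0$ for all integers $n\ge 0$. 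By the definition (\ref{3.4}), $a(x)_n b(x)$ is (a residue of) a combination of $a(x_1)b(x)$ and $b(x)a(x_1)$, i.e. of $Y_g(u^j,x_1)Y_g(v^m,x)$ and $Y_g(v^m,x)Y_g(u^j,x_1)$, weighted by $\left(\frac{x_1-x_0}{x}\right)^{j/k}$ and $x_0^n$; the iterate-type expression $a(x)_n b(x)$ picks out precisely the "singular part" governed by the supercommutator $[Y_g(u^j,x_1),Y_g(v^m,x)]$. Now invoke Lemma \ref{l3.3}: the supercommutator (\ref{3.1}) involves $\delta\big(\eta^{j-m}(x_1-x_0)^{1/k}/x_2^{1/k}\big)$, and expanding this as a series in $x_0$, when combined with the fractional-power factor coming from the $A^j$-grading, produces a formal expression that has no nonnegative powers of $x_0$ when $j\not\equiv m\pmod k$ — because $\eta^{j-m}\neq 1$ forces the relevant terms $Y_g((Y(u,x_0)v)^m,x_2)$, which only involve nonnegative integral powers of $x_0$, to be killed against a $\delta$-function twisted by a nontrivial root of unity. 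This is the same mechanism already used in the excerpt right before (\ref{k2}): "$Y(g^{-j}u^1,x_0)v^1$ only involves nonnegative integer powers of $x_0$ unless $j=0\ (\mathrm{mod}\ k)$."

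Concretely, I would first establish that $Y_g(u^j,x)_n Y_g(v^m,x) = 0$ for $n\ge 0$ when $j\neq m$, by combining (\ref{3.3}) with the locality bound (\ref{a3.3}) and the supercommutator (\ref{3.1}); then feed this into the $Y_A$-commutator formula to conclude $[Y_A(Y_g(u^j,x),x_1),Y_A(Y_g(v^m,x),x_2)]=0$. I would also want to double-check compatibility of the $\rho$-eigenspace indexing: since $\rho Y_g(u^j,x) = Y_g(u^{j-1},x)$, the honest $\rho$-eigenvectors are the discrete Fourier combinations $\sum_j \eta^{\ell j} Y_g(u^j,x)$, so strictly one should phrase the locality/vanishing in that basis, or observe directly that for each fixed pair $(j,m)$ with $j\neq m$ the operator products $Y_g(u^j,x_1)Y_g(v^m,x_2)$ satisfy the stronger vanishing of the $x_0$-nonnegative part.

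The main obstacle I expect is the bookkeeping around fractional powers and the twist by $\eta^{j-m}$: one must carefully track how the factor $\left(\frac{x_1-x_0}{x}\right)^{j/k}$ in the definition of $a(x)_n b(x)$ interacts with the $\delta\big(\eta^{j-m}(x_1-x_0)^{1/k}/x_2^{1/k}\big)$ appearing in Lemma \ref{l3.3}, to verify cleanly that no nonnegative power of $x_0$ survives. This is essentially a $\delta$-function manipulation like (\ref{delta-function2})--(\ref{delta-function3}) combined with the observation that $Y(u,x_0)v \in V[[x_0]][x_0^{-1}]$ has bounded-below $x_0$-powers but the twist obstructs the existence of the constant-term-and-higher part — so the argument is parallel to the derivation of the supercommutator (\ref{k2}) in the excerpt, and no genuinely new idea beyond Lemma \ref{l3.3} and the structure theory of \cite{Li2} should be required.
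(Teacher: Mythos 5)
Your reduction of the lemma to showing $Y_g(u^j,x)_n\,Y_g(v^m,x)=0$ for all $n\ge 0$ (via the commutator formula of the vertex superalgebra $A$) is the right skeleton; this is how Lemma~3.6 of~\cite{BDM} proceeds, and the paper's proof simply cites that result. The problem is the mechanism you propose for why those $n$-th products vanish.

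The concrete error is the assertion that ``$Y_g((Y(u,x_0)v)^m,x_2)$ \ldots only involve[s] nonnegative integral powers of $x_0$.'' For $u,v\in V$ one has $Y(u,x_0)v=\sum_{n\in\Z}u_nv\,x_0^{-n-1}$, and $u_nv\neq 0$ for infinitely many $n\ge 0$ in general, so $(Y(u,x_0)v)^m$ carries arbitrarily negative powers of $x_0$. You have transplanted the remark appearing just before~(\ref{k2}), but that remark concerns $Y(g^{-j}u^1,x_0)v^1=Y(u^{j+1},x_0)v^1$ for $j\not\equiv 0 \pmod k$, where $u^{j+1}$ and $v^1$ occupy \emph{different} tensor slots of $\vtk$, and the regularity comes from $Y(u,x_0)\mathbf{1}\in V[[x_0]]$. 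In~(\ref{3.1}) the $u$ and $v$ inside $(Y(u,x_0)v)^m$ sit in the \emph{same} slot $m$; the OPE is genuinely singular, the residue does not vanish, and $[Y_g(u^j,x_1),Y_g(v^m,x_2)]\ne 0$ even when $j\ne m$. So no ``nonnegative powers of $x_0$'' argument is available here, and the claim that this is ``the same mechanism'' as the passage before~(\ref{k2}) is simply not correct.

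A second, related imprecision is the identification $Y_g(u^j,x)\in A^j$, which you flag but nonetheless build on. The superscript on $u^j$ is a tensor-slot label, while the superscript in $A^j$ is a $\rho$-eigenvalue label; they do not coincide. The $\rho$-eigencomponents of $Y_g(u^j,x)$ are the mode pieces $Y_g^p(u^j,x)=\sum_{n\in p/k+\Z}u^j_n x^{-n-1}$, and from $\rho a(x)=\lim_{x^{1/k}\to\eta^{-1}x^{1/k}}a(x)$ one gets $\rho\,Y_g^p(u^j,x)=\eta^{p}Y_g^p(u^j,x)$, i.e.\ $Y_g^p(u^j,x)\in A^p$ for every $j$. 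Thus the twist factor in~(\ref{3.3})--(\ref{3.4}) must be $\bigl(\frac{x_1-x_0}{x}\bigr)^{p/k}$, not $\bigl(\frac{x_1-x_0}{x}\bigr)^{j/k}$, and one must sum over all $p$ to recover $Y_g(u^j,x)_n\,Y_g(v^m,x)$.

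The actual source of the vanishing is the scalar phase $\eta^{(m-j)p}$ visible in~(\ref{3.2}): after decomposing into the $\rho$-eigenpieces $Y_g^p(u^j,x)$ and computing their $n$-th products against $Y_g(v^m,x)$ for $n\ge 0$, the fractional-power factor built into (\ref{3.3}) pairs with the factor $\bigl(\frac{x_1-x_0}{x_2}\bigr)^{-p/k}$ in~(\ref{3.2}), and what remains from the $p$-dependence is the scalar $\eta^{(m-j)p}$; summing $p=0,\dots,k-1$ then yields the geometric sum $\sum_{p=0}^{k-1}\eta^{(m-j)p}=0$ precisely because $j\not\equiv m\pmod k$. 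This root-of-unity cancellation over the $\rho$-eigencomponents, not a power-counting bound on $x_0$, is what forces $Y_g(u^j,x)_n\,Y_g(v^m,x)=0$ for $n\ge 0$ and hence the vanishing of the $Y_A$-supercommutator.
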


\begin{proof} The proof is analogous to the proof of Lemma 3.6 in \cite{BDM} where we use the vertex superalgebra structure of $A$ rather than just the vertex algebra structure and we use the supercommutators of Lemma \ref{l3.3}. \end{proof}

Let $Y_g(u^i,z)_n$ for $n \in \mathbb{Z}$ denote the coefficient of $x^{-n-1}$ in the vertex operator $Y_A(Y_g(u^i,z), x)$ for $u \in V$.  That is
\[Y_A(Y_g(u^i,z), x) = \sum_{n \in \Z} Y_g(u^i,z)_n \; x^{-n-1} \in (\mathrm{End} \; A) [[x, x^{-1}]].\]

\begin{lem}\label{l3.6} For $u_1,...,u_k\in V$, we have  
\begin{multline*}
Y_A(Y_g(u^1_1,z)_{-1}\cdots Y_g(u_{k-1}^{k-1},z)_{-1}Y_g(u_k^k,z),x)  \\
 = Y_A(Y_g(u^1_1,z),x)\cdots Y_A(Y_g(u_{k-1}^{k-1},z),x)Y_A(Y_g(u_k^k,z),x) .
\end{multline*}
\end{lem}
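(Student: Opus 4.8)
The plan is to prove the statement by induction on $k$, using the fact that $A$ is a vertex superalgebra (obtained from Li's Theorem 3.14) and exploiting the mutual locality/commutativity already established in Lemma \ref{l3.5}. The key structural fact I would invoke is the standard ``iterate formula'' for vertex (super)algebras: for elements $a(z), b(z) \in A$, one has $Y_A(a(z)_{-1} b(z), x) = {}:\!Y_A(a(z),x) Y_A(b(z),x)\!:$ whenever $a(z)$ and $b(z)$ are \emph{mutually local} in a way that makes the normal-ordered product reduce to an honest product of the operators (no singular terms surviving). Concretely, for $j \neq m$ the operators $Y_A(Y_g(u^j,z),x_1)$ and $Y_A(Y_g(v^m,z),x_2)$ supercommute by Lemma \ref{l3.5}, so the product $Y_A(Y_g(u_1^1,z),x)\cdots Y_A(Y_g(u_k^k,z),x)$ involves only distinct tensor slots and hence, upon extracting the $(-1)$-modes, the normal ordering is trivial.

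First I would treat the base case $k=2$: show
\[
Y_A\bigl(Y_g(u_1^1,z)_{-1} Y_g(u_2^2,z),x\bigr) = Y_A(Y_g(u_1^1,z),x)\, Y_A(Y_g(u_2^2,z),x).
\]
This follows from the general vertex-superalgebra identity expressing $Y_A(a_{-1}b, x)$ as a sum $\sum_{i\geq 0} a_{-i-1}\,(\text{stuff})$ plus $\sum_{i\geq 0}(\text{stuff})\,b$-type terms (the standard normal-ordered product expansion), combined with Lemma \ref{l3.5}, which forces all the ``singular'' contributions — those coming from the contraction of $Y_g(u_1^1,z)$ with $Y_g(u_2^2,z)$ — to vanish because $a(z)_n b(z) = 0$ for $n \geq 0$ when $1 \neq 2$. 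Then I would run the induction: assuming the formula for $k-1$ factors, write $Y_g(u_1^1,z)_{-1}\cdots Y_g(u_k^k,z)$ as $Y_g(u_1^1,z)_{-1}\bigl(Y_g(u_2^2,z)_{-1}\cdots Y_g(u_k^k,z)\bigr)$, apply the two-factor iterate identity with $a(z) = Y_g(u_1^1,z) \in A^1$ and $b(z) = Y_g(u_2^2,z)_{-1}\cdots Y_g(u_k^k,z)$ — noting that $b(z)$ lies in the vertex subalgebra generated by $A^2, \dots, A^k$, so $a(z)$ is local with $b(z)$ in the strong sense that no singular terms survive — and finish by applying the inductive hypothesis to $Y_A(b(z), x)$. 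This mirrors Lemma 3.7 of \cite{BDM} and its analogue in \cite{B-superpermutation-odd}; I would point to those proofs and indicate the modification is simply the insertion of the appropriate signs $(-1)^{|u_i||u_j|}$, which are harmless here since distinct slots commute up to sign and the sign bookkeeping matches on both sides.

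The main obstacle I anticipate is making the ``no singular terms survive'' claim fully rigorous in the super setting: one must verify that the sign factors introduced by reordering the odd operators $Y_g(u_i^i,z)$ (each of which, for $k$ even, lives in $(\mathrm{End}\,A)[[x^{1/k},x^{-1/k}]]$ with half-integer modes allowed) do not obstruct the reduction of the normal-ordered product, and that the fractional-power subtleties in the definition \eqref{3.3}--\eqref{3.4} of $a(z)_n b(z)$ (the $(x_1 - x_0)/x)^{j/k}$ factor) do not produce extra terms beyond the classical case. The resolution is that the factor $\left(\frac{x_1-x_0}{x}\right)^{j/k}$ only affects \emph{which} modes $a(z)_n$ are nonzero, not the vanishing $a(z)_n b(z) = 0$ for $n \geq 0$ that we get from Lemma \ref{l3.5}; so the argument goes through verbatim from the nonsuper case with the standard sign insertions. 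I would therefore present the proof as: reduce to the two-factor identity via associativity/iterate formula in the vertex superalgebra $A$, invoke Lemma \ref{l3.5} to kill singular terms, and induct — citing \cite{BDM}, Lemma 3.7 and \cite{B-superpermutation-odd} for the detailed computation.
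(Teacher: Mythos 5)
Your proposal matches the paper's proof in both strategy and structure: establish the two-factor identity $Y_A(Y_g(u^i,z)_{-1}Y_g(v^j,z),x) = Y_A(Y_g(u^i,z),x)\,Y_A(Y_g(v^j,z),x)$ for $i\neq j$ by combining the vertex-superalgebra Jacobi/iterate formula in $A$ (which always yields the normal-ordered product) with the supercommutativity from Lemma~\ref{l3.5} to collapse the normal ordering into an honest product, then induct on the number of factors; the paper carries out the residue computation explicitly where you appeal to the standard iterate formula, but the content is identical.

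One small point of phrasing: in $Y_A(a_{-1}b,x)$ there are never ``surviving singular terms'' to kill — the $(-1)$-mode \emph{is} the normal-ordered product by definition; what Lemma~\ref{l3.5} does is let you reorder $\sum_{n\geq 0}Y_g(u^i,z)_n x^{-n-1}$ past $Y_A(Y_g(v^j,z),x)$ (absorbing the Koszul sign) so that the normal-ordered product becomes the ordinary product, exactly as the paper's last two equalities show. Your induction step also tacitly uses that $Y_g(u_1^1,z)$ supercommutes, as a field in $A$, with the element $Y_g(u_2^2,z)_{-1}\cdots Y_g(u_k^k,z)$ built from the other slots; this follows from Lemma~\ref{l3.5} together with the standard fact that the supercommutant of a field is closed under all $n$-th products, and is worth stating, though the paper elides it too.
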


\begin{proof} The proof is analogous to the proof of Lemma 3.7 of \cite{BDM}, and Lemma 5.5 of \cite{B-superpermutation-odd}, but modified to the current setting.  From the Jacobi identity on $A$, and Lemma \ref{l3.5}, we have, for $1\leq i<j\leq k$,
\begin{eqnarray*}
\lefteqn{Y_A(Y_g(u^i, z)_{-1} Y_g(v^j,z), x)}\\
&=& \mathrm{Res}_{x_1} \mathrm{Res}_{x_0} x_0^{-1} \left( x^{-1}_0\delta\left(\frac{x_1-x}{x_0}\right) Y_A(Y_g(u^i,z),x_1)Y_A(Y_g(v^j,z),x) \right. \\
& & \quad \left. - (-1)^{|u||v|}
x^{-1}_0\delta\left(\frac{x-x_1}{-x_0}\right) Y_A(Y_g(v^j,z),x)Y_A(Y_g(u^i,z),x_1)\right)\\
&=& \mathrm{Res}_{x_1} \Bigl(  (x_1-x)^{-1} Y_A(Y_g(u^i,z),x_1)Y_A(Y_g(v^j,z),x)  \\
& & \quad - (-1)^{|u||v|} (x-x_1)^{-1} Y_A(Y_g(v^j,z),x)Y_A(Y_g(u^i,z),x_1)\Bigr)\\
&=& \sum_{n <0} Y_g(u^i,z)_n x^{-n-1} Y_A(Y_g(v^j,z),x) \\
& & \quad - (-1)^{|u||v|}  Y_A(Y_g(v^j,z),x) \sum_{n \geq 0} Y_g(u^i,z)_n x^{-n-1}  \\
&=&  \sum_{n \in \Z} Y_g(u^i,z)_n x^{-n-1} Y_A(Y_g(v^j,z),x) \\
&=& Y_A(Y_g(u^i, z), x) Y_A(Y_g(v^j,z), x).
\end{eqnarray*}
The result follows by induction.  
\end{proof}

Define the map $f : \vtk \longrightarrow A$ by
\begin{eqnarray*}
f: \vtk &\longrightarrow& A\\
u_1\otimes\cdots \otimes u_k = (u_1^1)_{-1}\cdots (u_{k-1}^{k-1})_{-1}u^k_k \! \! \! &\mapsto& \! \! \! 
Y_g(u_1^1,z)_{-1}\cdots 
Y_g(u_{k-1}^{k-1},z)_{-1}Y_g(u_k^k,z) 
\end{eqnarray*}
for $u_1,...,u_k\in V$. Then $f(u^j)=Y_g(u^j,z).$

\begin{lem}\label{l3.7} $f$ is a homomorphism of vertex superalgebras.
\end{lem}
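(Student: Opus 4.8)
The plan is to verify that $f$ preserves the vacuum and intertwines the vertex operators, i.e., that $f(Y_{\vtk}(a,x)b) = Y_A(f(a),x)f(b)$ for all $a,b \in \vtk$, so that together with the well-definedness of $f$ (which follows from the fact that the $Y_g(u^i,z)_{-1}$ for fixed $i$ generate a commutative, associative product and that the mutual locality of Lemma~\ref{l3.5} lets the factors with distinct indices be reordered freely, matching the tensor algebra relations) we conclude $f$ is a homomorphism of vertex superalgebras. The vacuum is easy: $f({\bf 1}^{\otimes k}) = Y_g({\bf 1}^1,z)_{-1}\cdots Y_g({\bf 1}^{k-1},z)_{-1} Y_g({\bf 1}^k,z) = \mathrm{Id}_{M_\sigma} = Y_A(\mathrm{Id},x)$, using $Y_g({\bf 1}^j,x)=\mathrm{Id}$ and the vacuum axiom in $A$.

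For the intertwining property, I would reduce to generators. Since $\vtk$ is generated by the $u^j$ for $u\in V$, $j=1,\dots,k$, and since both sides of the desired identity are ``iterate''-type expressions, the standard strategy (as in \cite{BDM} Lemma~3.8 and \cite{B-superpermutation-odd}) is: first establish $f(Y_{\vtk}(u^j,x)w) = Y_A(Y_g(u^j,z),x)f(w)$ for a single generator $u^j$ and arbitrary $w\in\vtk$, then bootstrap to arbitrary $a\in\vtk$ using the Jacobi identity / associativity in $A$ together with Lemma~\ref{l3.6}. The single-generator case splits according to whether the generator index $j$ matches the ``slot'' it acts on in $w$: when acting in the $j$-th slot one uses the definition of $Y_{\vtk}$ via (\ref{define-tensor-product}) together with the fact (Lemma~\ref{l3.6}) that $Y_A(Y_g(u_1^1,z)_{-1}\cdots Y_g(u_k^k,z),x)$ factors as the product of the $Y_A(Y_g(u_i^i,z),x)$; when acting in a different slot $m\neq j$ one invokes the supercommutativity from Lemma~\ref{l3.5} to move $Y_A(Y_g(u^j,z),x)$ past the irrelevant factors, picking up the correct sign $(-1)^{|u^j||(\cdot)|}$ that precisely matches the sign in (\ref{define-tensor-product}). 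The component-operator identity $Y_g(u^j,z)_{-1}$-products corresponding to $f$ on $\vtk$ should be matched against the $\vtk$ product $u_1\otimes\cdots\otimes u_k = (u_1^1)_{-1}\cdots(u_{k-1}^{k-1})_{-1}u_k^k$ mode by mode.

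The signs are where I expect the real bookkeeping: in the super setting the reordering of the $Y_g(u_i^i,z)$ factors and the comparison with the tensor-product vertex operator (\ref{define-tensor-product}) both produce Koszul signs $(-1)^{|u_i||u_l|}$, and one must check these agree on the nose. By Remark~\ref{parity-grading-on-V} the operator $Y_g(u^j,z)_n$ has the same parity as $u$, so the locality and the $\rho$-eigenspace decomposition $A = \coprod A^j$ respect the $\Z_2$-grading, and the sign arithmetic is consistent; but carrying it through for a general monomial is the step requiring care. I expect the main obstacle to be precisely this: organizing the induction on the number of tensor factors (or on the length of the generator word) so that at each step the Jacobi identity in $A$, Lemma~\ref{l3.5}, and Lemma~\ref{l3.6} combine to reproduce both the analytic structure and the sign of $Y_{\vtk}$, exactly as in the nonsuper argument of \cite{BDM} but now tracking parities. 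Once the generator case and the inductive step are in place, that $f$ is a homomorphism of vertex superalgebras follows, and (since $f(u^j)=Y_g(u^j,z)$ and these generate $A$) $f$ is surjective, which will be used in the next section.
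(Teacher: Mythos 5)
Your reduction strategy matches the paper's in outline: reduce to a generator of $\vtk$ acting on a general monomial, use Lemma~\ref{l3.5} to commute past factors in different slots, and use Lemma~\ref{l3.6} to factorize $Y_A$ of the image of a monomial into an ordered product, so that everything comes down to proving
\[
Y_g(Y(u^1,x_0)v^1,x_2) \;=\; Y_A(Y_g(u^1,x_2),x_0)\,Y_g(v^1,x_2)
\]
for $u,v\in V$ (the ``same-slot'' case with $j=1$). That reduction is right and is exactly how the paper proceeds.

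However, at this reduced identity your proposal simply gestures at ``using the definition of $Y_{\vtk}$ via (\ref{define-tensor-product})'' and stops, and this is where the real content of the lemma lies. The identity displayed above is not a tautology; it is the heart of the proof and requires: (i) rewriting the left side via Proposition~\ref{psun1} so that the inner $Y$ is conjugated through $\Delta_k$, producing $Y_\sigma(Y(\Delta_k(x_2+x_0)u,(x_2+x_0)^{1/k}-x_2^{1/k})\Delta_k(x_2)v,x_2^{1/k})$; (ii) unwinding the right side from the defining formula for the vertex operators $Y_A$ on the local system, using the locality bound~(\ref{a3.3}) to collapse the two-term expression into a single $x_2^{-1}\delta((x_1-x_0)/x_2)$-term; (iii) the $\delta$-function identity~(\ref{delta-function2}) to pass from the sum over $p=0,\dots,k-1$ to the ``$1/k$-th root'' delta, and identity~(\ref{delta-function3}) to interchange expansion variables; (iv) the formal substitution $x\mapsto x_1^{1/k}-(x_1-x_0)^{1/k}$, and, crucially, the evenness of $k$ to guarantee that the delta-function substitution is legitimate with no spurious extra factor $(x^{-1}_2(x_1-x_0))^{|u|/2k}$ appearing. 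That last point is precisely where the odd-$k$ argument of \cite{B-superpermutation-odd} (which targets untwisted rather than $\sigma$-twisted modules) and the even-$k$ argument here diverge, and your write-up never invokes $k$ even at all.

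You also locate the ``main obstacle'' in the Koszul sign bookkeeping when reordering tensor factors, but in the paper those signs are handled in one line (they are exactly absorbed by Lemma~\ref{l3.5} and the sign convention~(\ref{define-tensor-product})), and what genuinely requires work is the $\Delta_k$/$\delta$-function computation in the single-slot step. So the proposal gets the architecture right but has a concrete gap: the single-slot iterate identity is asserted rather than proved, and the only place the hypothesis ``$k$ even'' enters the lemma is omitted entirely.
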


\begin{proof} We follow the spirit of the proof of \cite{B-superpermutation-odd} and \cite{BDM}, but must be careful when we need properties of the $\sigma$-twisted vertex operators $Y_\sigma$ on $M_\sigma$ rather than the less complicated case of needing only vertex operators on a weak $V$-module as in \cite{B-superpermutation-odd} and \cite{BDM}.

We need to show that
\[fY(u_1\otimes\cdots \otimes u_k,x)=Y_A( Y_g(u_1^1,z)_{-1}\cdots 
Y_g(u_{k-1}^{k-1},z)_{-1}Y_g(u_k^k,z),x)f\]
for $u_i\in V.$ 
Take $v_i\in V$ for $i=1,...,k.$ Then 
\begin{eqnarray*}
\lefteqn{fY(u_1\otimes\cdots \otimes u_k,x)(v_1\otimes \cdots\otimes v_k) }\\
&=& \! \! (-1)^s f(Y(u_1,x)v_1\otimes\cdots Y(u_k,x)v_k)\\
&=& \! \! (-1)^s Y_g(Y(u_1^1,x)v_1^1,z)_{-1}\cdots Y_g(Y(u_{k-1}^{k-1},x)v_{k-1}^{k-1},z)_{-1}
Y_g(Y(u_k^k,x)v_k^k,z) 
\end{eqnarray*}
for $s = \sum_{j=1}^{k-1} |v_j| \sum_{i = j + 1}^k |u_i|$.

By Lemma \ref{l3.6}, we have
\begin{multline*} 
Y_A( Y_g(u_1^1,z)_{-1}\cdots Y_g(u_{k-1}^{k-1},z)_{-1}Y_g(u_k^k,z),x)
f(v_1\otimes \cdots \otimes v_k) \\
= Y_A(Y_g(u^1_1,z),x)\cdots Y_A(Y_g(u_{k-1}^{k-1},z),x)Y_A(Y_g(u_k^k,z),x)
Y_g(v_1^1,z)_{-1}\\
\cdots Y_g(v_{k-1}^{k-1},z)_{-1}Y_g(v_k^k,z).
\end{multline*}
By Lemma \ref{l3.5}, it is enough to show that 
$$ Y_g(Y(u^j,x)v^j,z)=Y_A(Y_g(u^j,z),x)Y_g(v^j,z)$$
for $u,v\in V$ and $j=1,...,k.$ In fact, in view of the
relation between $Y(u^1,z)$ and $Y(u^j,z)$ for 
$u\in V,$  we only need to prove the case $j=1.$

By Proposition \ref{psun1},
\begin{eqnarray*}
Y_g(Y(u^1,x_0)v^1,x_2) &=& Y_\sigma(\Delta_k(x_2)Y(u,x_0)v,x_2^{1/k})\\
&=& Y_\sigma(Y(\Delta_k(x_2+x_0)u,(x_2+x_0)^{1/k}-x_2^{1/k})\Delta_k(x_2)v,x_2^{1/k}).
\end{eqnarray*}
On the other hand,
\[Y_A(Y_g(u^1,x_2),x_0)Y_g(v^1,x_2)=\sum_{p=0}^{k-1}\Res_{x_1}
\left(\frac{x_1-x_{0}}{x_2}\right)^{p/k}X\]
where
\begin{multline}
X=x_{0}^{-1}\delta\left(\frac{x_1-x_2}{x_{0}}\right)Y_g(u^1,x_1)
Y_g(v^1,x_2)\\
-(-1)^{|u||v|}x_{0}^{-1}\delta\left(\frac{x_2-x_1}{-x_{0}}\right)
Y_g(v^1,x_2)Y_g(u^1,x_1).
\end{multline}

By equation (\ref{a3.3}), there exists a positive integer $N$ such that
\[(x_1-x_2)^NY_g(u^1,x_1)Y_g(v^1,x_2)= (-1)^{|u||v|} (x_1-x_2)^NY_g(v^1,x_2)Y_g(u^1,x_1).\]
Thus, using the three-term $\delta$-function identity (\ref{three-term-delta}), we have 
\begin{eqnarray*}
X &=& x_{0}^{-1}\delta\left(\frac{x_1-x_2}{x_{0}}\right)Y_g(u^1,x_1)
Y_g(v^1,x_2)\\
& &\quad - (-1)^{|u||v|} x_{0}^{-1} \delta\left(\frac{x_2-x_1}{-x_{0}}\right)
x_0^{-N}(x_1-x_2)^NY_g(v^1,x_2)Y_g(u^1,x_1)\\
&=& x_{0}^{-1}\delta\left(\frac{x_1-x_2}{x_{0}}\right)x_0^{-N}
\left((x_1-x_2)^NY_g(u^1,x_1)Y_g(v^1,x_2)\right)\\
& & \quad - (-1)^{|u||v|} x_{0}^{-1} \delta\left(\frac{x_2-x_1}{-x_{0}}\right)
x_0^{-N}\left((x_1-x_2)^NY_g(u^1,x_1)Y_g(v^1,x_2)\right)\\
&=& x_2^{-1}x_0^{-N}\delta\left(\frac{x_1-x_0}{x_2}\right)
\left((x_1-x_2)^NY_g(u^1,x_1)Y_g(v^1,x_2)\right) .
\end{eqnarray*}

Therefore using the $\delta$-function relation (\ref{delta-function2}), 
we have
\begin{multline*}
Y_A(Y_g(u^1,x_2),x_0)Y_g(v^1,x_2) \\
= \Res_{x_1} x_0^{-N} x_2^{-1} \delta\Biggl(\frac{(x_1-x_0)^{1/k}}{x_2^{1/k}}\Biggr) \left((x_1-x_2)^NY_g(u^1,x_1)Y_g(v^1,x_2)\right).
\end{multline*}

Let $x$ be a new formal variable which commutes with $x_0,x_1,x_2.$ 
Then using the  $\delta$-function identities of Section \ref{formal-calculus-section} and the definition of $Y_g$ given by (\ref{define-g-twist}), we have 
\begin{eqnarray*}
\lefteqn{x_2^{-1/k}\delta\Biggl(\frac{x_1^{1/k}-x}{x_2^{1/k}}
\Biggr)\left((x_1-x_2)^N Y_g(u^1,x_1)Y_g(v^1,x_2)\right) }\\
&=&  x^{-1}\delta\Biggl(\frac{x_1^{1/k}-x_2^{1/k}}{x}\Biggr)
\left((x_1-x_2)^N Y_g(u^1,x_1)Y_g(v^1,x_2)\right)\\
& & \quad  - \; x^{-1} \delta\Biggl(\frac{-x_2^{1/k}+x_1^{1/k}}{x}
\Biggr)\left((x_1-x_2)^N Y_g(u^1,x_1)Y_g(v^1,x_2)\right)\\
&=&  (x_1-x_2)^N x^{-1}\delta\Biggl(\frac{x_1^{1/k}-x_2^{1/k}}{x}\Biggr)
Y_\sigma (\D(x_1)u,x_1^{1/k})Y_\sigma (\D(x_2)v,x_2^{1/k})\\
& &\quad  -\; (x_1-x_2)^N x^{-1} \delta\Biggl(\frac{-x_2^{1/k} +
x_1^{1/k}}{x}\Biggr) Y_\sigma (\D(x_2)v,x_2^{1/k})Y_\sigma (\D(x_1)u,x_1^{1/k})\\
&=&  (x_1-x_2)^N x_2^{-1/k}\delta\Biggl(\frac{x_1^{1/k}-x}{x_2^{1/k}}\Biggr)
Y_\sigma (Y(\D(x_1)u,x)\D(x_2)v,x_2^{1/k}).
\end{eqnarray*}
Note that the first term in the above formula is well defined when
$x$ is replaced by  $x_1^{1/k}-(x_1-x_0)^{1/k}$, and therefore the last 
term is also well defined under this substitution. Thus since $k$ is even
\begin{eqnarray*}
\lefteqn{ x_0^{-N}x_2^{-1/k}\delta\Biggl(
\frac{(x_1-x_0)^{1/k}}{x_2^{1/k}}\Biggr)\left((x_1-x_2)^N
Y_g(u^1,x_1)Y_g(v^1,x_2)\right) }\\
&=&  x_2^{-1/k}\delta\Biggl(\frac{(x_1-x_0)^{1/k}}{x_2^{1/k}}\Biggr)
Y_\sigma (Y(\D(x_1)u,x_1^{1/k}-(x_1-x_0)^{1/k})\D(x_2)v,x_2^{1/k})\\
&=& x_2^{-1/k}\delta\Biggl(\frac{(x_1-x_0)^{1/k}}{x_2^{1/k}}\Biggr)\\
& & \quad 
Y_\sigma (Y(\D(x_2+x_0)u,(x_2+x_0)^{1/k}-x_2^{1/k})\D(x_2)v,x_2^{1/k}).
\end{eqnarray*}

Finally using Proposition \ref{psun1}, we have
\begin{eqnarray*}
\lefteqn{Y_A(Y_g(u^1,x_2),x_0)Y_g(v^1,x_2) }\\
&=&\Res_{x_1}x_2^{-1}\delta\Biggl(\frac{(x_1-x_0)^{1/k}}{x_2^{1/k}}\Biggr)\\
& & \quad Y_\sigma (Y(\D(x_2+x_0)u,(x_2+x_0)^{1/k}-x_2^{1/k}) \D(x_2)v,x_2^{1/k})\\
&=&  Y_\sigma (Y(\D(x_2+x_0)u,(x_2+x_0)^{1/k}-x_2^{1/k})\D(x_2)v,x_2^{1/k})\\
&=& Y_g(Y(u^1,x_0)v^1,x_2),
\end{eqnarray*}
as desired.  \end{proof}

Let $(M_\sigma,Y_\sigma)$ be a weak $\sigma$-twisted $V$-module, $k$ a positive even integer, and $g = (1 \; 2 \; \cdots \; k)$.  Define $T_g^k(M_\sigma,Y_\sigma) = (T_g^k(M_\sigma),
Y_g) = (M_\sigma, Y_g)$.  That is $T_g^k(M_\sigma, Y_\sigma)$ is $M_\sigma$ as the underlying vector space and the vertex operator $Y_g$ is given by (\ref{define-g-twist}).

Now we state our first main theorem of the paper.
\begin{thm}\label{main1} 
$(T_g^k(M_\sigma),Y_g)$ is a weak $g$-twisted $V^{\otimes k}$-module such
that $T_g^k(M_\sigma)=M_\sigma$, and $Y_g$, defined by (\ref{define-g-twist}), is the
linear map {}from $V^{\otimes k}$ to $(\End \; T_g^k(M_\sigma))[[x^{1/k}, \\
x^{-1/k}]]$
defining the twisted module structure. Moreover, 

(1) $(M_\sigma, Y_\sigma)$ is an irreducible weak $\sigma$-twisted $V$-module if and only if
$(T_g^k(M_\sigma), Y_g)$ is an irreducible weak $g$-twisted $V^{\otimes
k}$-module.

(2) $M_\sigma$ is a weak admissible $\sigma$-twisted $V$-module if and only if $T_g^k(M_\sigma)$ is a weak admissible $g$-twisted $V^{\otimes k}$-module.

(3) $M_\sigma$ is an ordinary $\sigma$-twisted $V$-module if and only if $T_g^k(M_\sigma)$ is an ordinary $g$-twisted $V^{\otimes k}$-module.
\end{thm}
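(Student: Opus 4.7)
My plan is to assemble the machinery developed in Lemmas \ref{l3.1}--\ref{l3.7} with standard functorial observations. For the main claim, I would invoke the vertex superalgebra homomorphism $f : \vtk \to A$ from Lemma \ref{l3.7}, together with Theorem 3.14 of \cite{Li2}, which provides the vertex superalgebra structure on the local system $A$, the order-$k$ automorphism $\rho$ determined on generators by $\rho \, Y_g(u^j,x) = Y_g(u^{j-1},x)$, and the weak generalized $\rho$-twisted $A$-module structure on $M_\sigma$ via $Y_A(\alpha(x),x_1) = \alpha(x_1)$. The key observation is that $\rho \circ f = f \circ g$: on the generators $u^j$, both sides produce $Y_g(u^{j-1},x) = f(u^{j-1}) = f(gu^j)$, and the equality extends to all of $\vtk$ by the homomorphism property. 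Combining this with $Y_g(u,x) = Y_A(f(u),x)|_{M_\sigma}$ pulls the $\rho$-twisted Jacobi identity for $M_\sigma$ as an $A$-module back to the $g$-twisted Jacobi identity for $M_\sigma$ as a $\vtk$-module. The vacuum axiom holds since $\D(x){\bf 1} = {\bf 1}$, giving $Y_g({\bf 1}^j,x) = Id_{M_\sigma}$ and hence $f({\bf 1}^{\otimes k})$ equal to the vacuum of $A$; truncation transfers directly from the $A$-module structure; and $\Z_2$-grading compatibility holds because $\D(x)$ preserves the parity of $V$ (being built only from the even operators $L(n)$), so $\bY(u,x)$ has modes of parity $|u|$, and $f$ preserves parity under iterated products.

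For (1), since $T_g^k(M_\sigma) = M_\sigma$ as vector spaces, the argument is to match submodule lattices. Any $\sigma$-twisted $V$-invariant subspace $W \subseteq M_\sigma$ is automatically invariant under $Y_g(u^1,x) = Y_\sigma(\D(x)u, x^{1/k})$ for every $u \in V$, and hence under all $Y_g(u^j,x)$ by (\ref{define-g-twist}). Conversely, because $\D(x)$ is invertible as an operator on $V[[x^{1/k}, x^{-1/k}]]$ (its leading term on a weight-$r$ vector is $k^{-r} x^{r(1-k)/k}$ times that vector, so it is triangular with respect to the weight filtration on $V$), any $v \in V$ can be written as $v = \D(x)\phi(v,x)$ with $\phi(v,x) \in V[[x^{1/k}, x^{-1/k}]]$; invariance of $W$ under $\{Y_g(u^1,x)\}_{u \in V}$ then yields invariance under $Y_\sigma(v, x^{1/k})$ for all $v \in V$, hence under $Y_\sigma(v,x)$.

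For (2) and (3), I would transport the gradings directly. Using Remarks \ref{parity-grading-remark} and \ref{even-grading-remark}, a weak admissible $\sigma$-twisted structure $M_\sigma = \coprod_{n \in \N} M_\sigma(n)$ corresponds to the weak admissible $g$-twisted structure defined by $T_g^k(M_\sigma)(n/k) := M_\sigma(n)$; the required shift
\[(u^j)_n^g T_g^k(M_\sigma)(m/k) \subseteq T_g^k(M_\sigma)\bigl((m + k(\wt(u) - n - 1))/k\bigr)\]
is verified by expanding $Y_\sigma(\D(x)u, x^{1/k})$ and tracking how the prefactor $x^{\wt(u)(1-k)/k}$ from $k^{-L(0)} x^{(1-k)L(0)/k}$, the $x^{-i/k} L(i)$ weight-lowering terms from the exponential factor, and the $\sigma$-twisted mode gradings combine to yield precisely a shift of $k(\wt(u) - n - 1)$. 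For (3), computing as in (\ref{sun1}) and summing $Y_g(\omega^j,x)$ over $j$ via the limits in (\ref{define-g-twist}) yields $L^g(0) = k^{-1} L^\sigma(0) + (k^2-1)c/(24k)$, which provides an affine bijection between $L^\sigma(0)$-eigenspaces of $M_\sigma$ and $L^g(0)$-eigenspaces of $T_g^k(M_\sigma)$ under which finite-dimensionality and the lower-bounded-spectrum condition transfer automatically. The main obstacle is the grading bookkeeping in (2), which reduces to the routine expansion just outlined once the explicit form of $Y_\sigma(\D(x)u, x^{1/k})$ is written out.
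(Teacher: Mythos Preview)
Your proposal is correct and follows essentially the same approach as the paper: invoking Lemma \ref{l3.7} together with Li's local system machinery for the main claim, using the invertibility of $\Delta_k(x)$ to identify submodule lattices for (1), defining the admissible grading via $T_g^k(M_\sigma)(n/k)=M_\sigma(n)$ and expanding $\Delta_k(x)u$ for (2), and computing $L^g(0)=k^{-1}L^\sigma(0)+(k^2-1)c/(24k)$ for (3). Your explicit verification that $\rho\circ f=f\circ g$ on generators (and hence globally) is exactly the compatibility the paper leaves implicit when it says the weak $g$-twisted module structure is ``immediate from Lemma \ref{l3.7}''; the paper also records the inverse relation $Y_g((\Delta_k(x)^{-1}u)^1,x)=Y_\sigma(u,x^{1/k})$ and uses the expansion $\Delta_k(x)^{-1}u=\sum_j u[j]\,x^{p-p/k-j}$ to carry out the converse direction of (2) explicitly, which you only allude to.
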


\begin{proof} It is immediate {}from Lemma \ref{l3.7} that $T_g^k(M_\sigma)=M_\sigma$ is a weak $g$-twisted $V^{\otimes k}$-module with $Y_g(u^1,x)=\bar Y_\sigma(u,x)$.  Note
that with 
\begin{equation}\label{Delta-inverse}
\Delta_k (x)^{-1} = ( x^{1/2k})^{-( 1 - k) 2L(0)}  (k^{1/2})^{2L(0)} \exp 
\Biggl( -\sum_{j \in \ZZ} a_j x^{- j/k} L(j) \Biggr),
\end{equation}
we have 
\begin{equation}\label{for-grading}
Y_g((\Delta_k(x)^{-1}u)^1,x)=\bar Y_\sigma (\Delta_k(x)^{-1}u,x)=Y_\sigma(u,x^{1/k}) ,
\end{equation}
and all twisted vertex operators $Y_g(v,x)$ for $v\in \vtk$ can
be generated {}from $Y_g(u^1,x)$ for $u\in V.$  

It is clear now that $M_\sigma$ is an irreducible weak $\sigma$-twisted $V$-module if and only if $T_g^k(M_\sigma)$ is an irreducible weak $g$-twisted $V^{\otimes k}$-module, proving statement (1).

For statement (2), we first assume that $M_\sigma$ is a weak admissible $\sigma$-twisted $V$-module.  Then from the definition of a weak admissible $\sigma$-module and Remark \ref{parity-grading-remark}, we have $M_\sigma = \coprod_{n\in \mathbb{N}} M_\sigma(n)$ such that for $m \in \frac{1}{2} \mathbb{Z}$, the component operator $u_m^\sigma$ of $Y_\sigma(u, z)$, satisfies $u_m^\sigma M_\sigma(n)\subset M_\sigma(\wt \; u-m-1+n)$ if $u\in V$ is of homogeneous weight.  

Define a $\frac{1}{k}\N$-gradation on $T_g^k(M_\sigma)$ such that $T_g^k(M_\sigma)(n/k) =
M_\sigma(n)$ for $n\in \N$.  
Recall that $Y_g(v,x) = \sum_{m \in \frac{1}{k} \mathbb{Z}} v^g_m x^{-m-1}$ for $v\in \vtk$.  We need to show that $v^g_mT_g^k(M_\sigma)(n)\subset T_g^k(M_\sigma)(\wt \; v-m-1+n)$ for $m\in\frac{1}{k} \Z$, and $n \in \N$.  Since all twisted vertex operators $Y_g(v,x)$ for $v\in \vtk$ can be generated {}from $Y_g(u^1,x)$ for $u\in V$, it is
enough to show $(u^1)^g_m T_g^k(M_\sigma)(n) \subset T_g^k(M_\sigma)(\wt \; u-m-1+n)$.

Let $u\in V_p$ for $p\in\frac{1}{2} \Z$.  Then 
\[\Delta_k(x)u=\sum_{j=0}^{\infty}u(j)x^{p/k - p - j/k}\]
where $u(j)\in V_{p-j}.$ Thus
\begin{eqnarray*}
Y_g(u^1,x) &= &Y_\sigma (\Delta_k(x)u,x^{1/k})\ = \ \sum_{j=0}^{\infty}Y_\sigma(u(j),x^{1/k}) x^{p/k - p - j/k} \\
&=& \sum_{j=0}^{\infty}\sum_{l \in \frac{1}{2} \mathbb{Z}} u(j)^\sigma_l x^{(-l-1)/k} x^{p/k - p - j/k} \nonumber.
\end{eqnarray*}
Therefore for $m \in \frac{1}{k} \mathbb{Z}$, we have
\[(u^1)^g_m=\sum_{j=0}^{\infty}u(j)_{(1-k)p-j-1+km+k}.\]
Since the weight of $u(j)_{(1-k)p-j-1+km+k}$ is $k(p-m-1)$, we see that for $n \in \frac{1}{k} \N$, we have $(u^1)^g_mT_g^k(M_\sigma)(n)=(u^1)^g_mM_\sigma(kn) \subset M_\sigma (k(p-m-1+n)) =T_g^k(M_\sigma)(p-m-1+n)$, showing that $T_g^k(M_\sigma)$ is a weak admissible $g$-twisted $V^{\otimes k}$-module.

Conversely, we assume that $T_g^k(M_\sigma)$ is a weak admissible $g$-twisted $V^{\otimes k}$-module, i.e., we have $T_g^k(M_\sigma) = \coprod_{n\in \frac{1}{k}\mathbb{N}} T_g^k(M_\sigma)(n)$ such that for $m \in \frac{1}{k} \mathbb{Z}$, the component operator $u^g_m$ of $Y_g(u, x)$ satisfies $u^g_m T_g^k(M_\sigma)(n)\subset T_g^k(M_\sigma)(\wt \; u-m-1+n)$ if $u\in V^{\otimes k}$ is of homogeneous weight.   Define an $\N$-gradation on $M_\sigma$ such that $M_\sigma (n) = T_g^k(M_\sigma)(n/k)$ for $n\in \N.$ 

Note that by again letting $u\in V_p$ for $p\in\frac{1}{2} \Z$, then 
\[\Delta_k(x)^{-1} u=\sum_{j=0}^{\infty}u[j]x^{ p -p/k - j}\]
where $u[j]\in V_{p-j}.$  Thus Equation (\ref{for-grading}) implies 
\begin{eqnarray*}
Y_\sigma (u, x) &=& Y_g((\Delta_k(x^k)^{-1}u)^1,x^k) \ = \ \sum_{j=0}^\infty Y_g(u[j]^1, x^k) x^{pk - p -jk} \\
&=& \sum_{j=0}^\infty \sum_{l \in \frac{1}{k} \mathbb{Z}} (u[j]^1)^g_l x^{-kl-k} x^{pk - p -jk} 
\end{eqnarray*}
and thus for $m \in \frac{1}{2} \mathbb{Z}$
\[u_m^\sigma = \sum_{j=0}^\infty (u[j]^1)^g_{\frac{1}{k}((k-1)p -jk -k + m + 1) }.\]
The weight of $(u[j]^1)^g_{\frac{1}{k}((k-1)p -jk -k + m + 1) }$ is $\frac{1}{k}(p-m-1)$.  Therefore for the weak $\sigma$-twisted $V$-module $M_\sigma$, we have for $n \in \N$, that $u^\sigma_m M_\sigma (n) = u^\sigma_m T_g^k(M_\sigma) (n/k) \subset T_g^k(M_\sigma) ( \frac{1}{k} (p - m - 1 + n)) = M_\sigma(p-m-1 + n)$, finishing the proof of (2).

In order to prove (3) we write $Y_g(\bar\omega,x) = \sum_{n\in\Z}
L^g(n)x^{-n-2}$ where $\bar\omega=\sum_{j=1}^k\omega^j$.  We have
\[Y_g(\bar\omega,x)=\sum_{j=0}^{k-1} \ \lim_{x^{1/k}\mapsto \eta^{-j}x^{1/k}}
  Y_g(\omega^1,x).\]
It follows {}from (\ref{sun1}) that 
\begin{equation}\label{L(0)-conversion}
L^g(0)=\frac{1}{k}L^\sigma (0)+\frac{(k^2-1)c}{24k}.  
\end{equation}
This immediately implies (3).
\end{proof}

Let $V$ be an arbitrary vertex operator superalgebra and $g$ an automorphism of $V$ of finite order. We denote the categories of weak, weak admissible and ordinary generalized $g$-twisted $V$-modules by $\mathcal{ C}^g_w(V),$ $\mathcal{ C}^g_a(V)$ and $\mathcal{ C}^g(V)$, respectively.  
%If $g=1$, we habitually remove the index $g.$

Now again consider the vertex operator superalgebra $V^{\otimes k}$ and the
$k$-cycle $g = (1 \; 2 \; \cdots \; k)$.  Define
\begin{eqnarray*}
T_g^k: \mathcal{ C}^\sigma_w(V) &\longrightarrow& \mathcal{ C}^g_w(V^{\otimes k})\\
  (M_\sigma,Y_\sigma) &\mapsto& (T_g^k(M_\sigma),Y_g) = (M_\sigma,Y_g)\\
     f  &\mapsto& T_g^k(f) = f
\end{eqnarray*}
for $(M_\sigma,Y_\sigma)$ an object and $f$ a morphism in $\mathcal{ C}^\sigma_w(V)$.  

The following corollary to Theorem \ref{main1} follows immediately.

\begin{cor}\label{c3.10} 
If $k$ is even, then $T_g^k$ is a functor {}from the category $\mathcal{ C}^\sigma_w(V)$ of weak parity-twisted $V$-modules to the category $\mathcal{ C}^g_w(\vtk)$ of weak $g = (1 \; 2 \; \cdots \; k)$-twisted $V^{\otimes k}$-modules, such that: (1) $T_g^k$ preserves irreducible
objects; (2) The restrictions of $T_g^k$ to $\mathcal{ C}^\sigma_a(V)$ and $\mathcal{
C}^\sigma(V)$ are functors {}from $\mathcal{ C}^\sigma_a(V)$ and $\mathcal{ C}^\sigma(V)$ to $\mathcal{C}^g_a(\vtk)$ and $\mathcal{ C}^g(\vtk)$, respectively.
\end{cor}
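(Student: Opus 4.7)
The plan is to deduce the corollary directly from Theorem \ref{main1}, with the main remaining task being the verification that $T_g^k$ acts sensibly on morphisms. First I would check that $T_g^k$ is well-defined on morphisms: given $f: (M_\sigma, Y_\sigma) \to (M'_\sigma, Y'_\sigma)$ a homomorphism of weak $\sigma$-twisted $V$-modules, I claim $f: (T_g^k(M_\sigma), Y_g) \to (T_g^k(M'_\sigma), Y'_g)$ is a homomorphism of weak $g$-twisted $V^{\otimes k}$-modules. Since $Y_g(u^1, x) = Y_\sigma(\Delta_k(x)u, x^{1/k})$ and $\Delta_k(x)$ acts only on $V$, the intertwining property $f \circ Y_\sigma(v, x^{1/k}) = Y'_\sigma(v, x^{1/k}) \circ f$ (applied coefficient-wise to the formal series $\Delta_k(x)u$) immediately gives $f \circ Y_g(u^1, x) = Y'_g(u^1, x) \circ f$. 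By the limit definition (\ref{define-g-twist}), this extends to $Y_g(u^{j+1}, x)$ for all $j$, and then by Lemma \ref{l3.6} and the fact that all $Y_g(v, x)$ for $v \in V^{\otimes k}$ are generated via $Y_A$ from the $Y_g(u^j, x)$, it follows that $f$ intertwines $Y_g$ with $Y'_g$ on all of $V^{\otimes k}$. Functoriality (preservation of identities and composition) is then automatic since $T_g^k(f) = f$ on the underlying vector space.

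Next, statement (1) is essentially Theorem \ref{main1}(1). Since $T_g^k(M_\sigma) = M_\sigma$ as a vector space, and any $Y_g$-invariant subspace is in particular $\bar{Y}_\sigma$-invariant (take $v = u^1$), the correspondence between $Y_\sigma$-invariant subspaces and $Y_g$-invariant subspaces used in the proof of Theorem \ref{main1}(1) gives the equivalence of irreducibility.

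For statement (2), I would argue the admissible and ordinary cases separately, relying on parts (2) and (3) of Theorem \ref{main1} for the object level. On morphisms, if $f$ is a homomorphism of weak admissible $\sigma$-twisted modules, so $f(M_\sigma(n)) \subseteq M'_\sigma(n)$, then by the explicit gradings $T_g^k(M_\sigma)(n/k) = M_\sigma(n)$ and $T_g^k(M'_\sigma)(n/k) = M'_\sigma(n)$ constructed in the proof of Theorem \ref{main1}(2), we have $f(T_g^k(M_\sigma)(n/k)) \subseteq T_g^k(M'_\sigma)(n/k)$, so $f$ is a homomorphism of weak admissible $g$-twisted modules. For the ordinary case, the relation (\ref{L(0)-conversion}) $L^g(0) = \tfrac{1}{k} L^\sigma(0) + \tfrac{(k^2-1)c}{24k}$ shows that the $L^\sigma(0)$-eigenspace $M_{\sigma, \lambda}$ coincides with the $L^g(0)$-eigenspace $T_g^k(M_\sigma)_{\lambda/k + (k^2-1)c/24k}$, so an $L^\sigma(0)$-grade-preserving map is automatically $L^g(0)$-grade-preserving under this relabeling.

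There is really no serious obstacle here; every piece of content sits in Theorem \ref{main1} itself, and the corollary simply packages it in the language of functors. The only thing that requires a moment's care is the morphism level, and even there the fact that $\Delta_k(x) \in (\mathrm{End}\, V)[[x^{1/2k}, x^{-1/2k}]]$ acts on $V$ rather than on $M_\sigma$ makes the intertwining property trivially pass from $Y_\sigma$ to $\bar{Y}_\sigma$ and hence to $Y_g$.
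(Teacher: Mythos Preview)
Your proposal is correct and matches the paper's approach exactly: the paper offers no proof beyond the remark that the corollary ``follows immediately'' from Theorem \ref{main1}, and you have simply spelled out why, with the only extra care being your explicit check on morphisms (which the paper leaves implicit in the definition $T_g^k(f) = f$). Your observation that $\Delta_k(x)$ acts on $V$ rather than on $M_\sigma$, so that $f$ trivially intertwines $\bar Y_\sigma$ and hence all of $Y_g$, is precisely the right justification.
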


In the next section we will construct a functor $U_g^k$, in the case when $k$ is even, {}from the
category $\mathcal{ C}^g_w(\vtk)$ to the category $\mathcal{ C}^\sigma_w(V)$ such
that $U_g^k \circ T_g^k = id_{\mathcal{ C}^\sigma_w(V)}$ and $T_g^k \circ U_g^k = 
id_{\mathcal{ C}^g_w(\vtk)}$.

\section{Constructing a weak $\sigma$-twisted $V$-module structure on a 
weak $g = (1 \; 2 \; \cdots \; k)$-twisted $V^{\otimes k}$-module for $k$ even}\label{classification-section}
\setcounter{equation}{0}

For $k \in \ZZ$ and $g = (1\; 2\; \cdots \; k)$, let $M_g=(M_g,Y_g)$ be a 
weak $g$-twisted $\vtk$-module.  Motivated by the construction 
of weak $g$-twisted $\vtk$-modules {}from weak $\sigma$-twisted $V$-modules in 
Section \ref{tensor-product-twisted-construction-section}, we consider 
\begin{equation}\label{define U}
Y_g((\Delta_k(x^k)^{-1}u)^1,x^k)
\end{equation}
for $u\in V$ where $\Delta_k (x)^{-1}$ is given by 
(\ref{Delta-inverse}).  Note that (\ref{define U}) is multivalued since 
$Y_g((\Delta_k(x)^{-1}u)^1,x) \in (\mathrm{End} \, M_g) [[x^{1/2k}, x^{-1/2k}]]$.   
Thus we define 
\begin{equation}\label{define-sigma-twist}
Y_\sigma(u,x) = Y_g((\Delta_k(x^k)^{-1}u)^1,x^k)
\end{equation} 
to be the unique formal Laurent series in $(\mathrm{End} \, M_g) [[x^{1/2}, x^{-1/2}]]$ given by  
taking $(x^{k})^{1/2k} = x^{1/2}$.

Our goal in this section is to construct a functor $U_g^k : \mathcal{ C}_w^g(\vtk) \rightarrow \mathcal{ C}^\sigma_w(V)$ with $U_g^k(M_g,Y_g) = (U_g^k(M_g),Y_\sigma) = (M_g,Y_\sigma)$ for the case when $k$ is even.  If we instead define $Y_\sigma$ by taking $(x^{2k})^{1/k} = \eta^j x^{1/2}$ for $\eta$ a fixed primitive $k$-th root of unity and  $j=1,\dots,k-1$, then $(M_g,Y_\sigma)$ will not be a weak $\sigma$-twisted $V$-module.  Further note that this implies that if we allow $x=z$ to be a complex number and if we define $z^{1/k}$ using the principal branch of the logarithm, then much of our work in this section is valid if and only if $-\pi/k < 
\mathrm{arg} \; z < \pi/k$.

\begin{lem}\label{l4.1} For $u\in V,$ we have
\begin{eqnarray*}
Y_\sigma(L(-1)u,x) &=& \left(\frac{d}{dx}((x^k)^{1/k})\right)\frac{d}{dx}Y_\sigma(u,x)\\
&=& \frac{d}{dx} Y_\sigma(u,x)
\end{eqnarray*}
on $M_g$.  Thus the $L(-1)$-derivative property holds for $Y_\sigma$ on $M_g$.
\end{lem}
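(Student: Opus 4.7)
The plan is to differentiate $Y_\sigma(u,x) = Y_g((\Delta_k(x^k)^{-1}u)^1, x^k)$ via the chain rule and then use the second identity from Lemma \ref{c2.5} (equation (\ref{second identity in voa})) to convert the result into $Y_\sigma(L(-1)u,x)$. First, I would write $A(z) = Y_g((\Delta_k(z)^{-1}u)^1, z)$, so that $Y_\sigma(u,x) = A(x^k)$ under the branch convention $(x^k)^{1/2k}=x^{1/2}$, and therefore $\frac{d}{dx}Y_\sigma(u,x) = kx^{k-1}\,\frac{dA}{dz}\bigl|_{z=x^k}$.

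Next I would compute $\frac{dA}{dz}$ by recognizing two sources of $z$-dependence. The dependence through $\Delta_k(z)^{-1}u$ contributes $Y_g\bigl((\tfrac{d}{dz}\Delta_k(z)^{-1}u)^1,z\bigr)$, while the dependence through the second slot of $Y_g$ contributes, via the $L(-1)$-derivative property for the weak $g$-twisted $V^{\otimes k}$-module $M_g$, the term $Y_g\bigl(L(-1)(\Delta_k(z)^{-1}u)^1,z\bigr)$. Here I use the standard fact that for the conformal element $\bar\omega = \sum_{j=1}^k \omega^j$ of $V^{\otimes k}$ one has $L(-1)v^1 = (L(-1)v)^1$ since $L(-1)\mathbf{1}=0$. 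Adding these and applying linearity gives
\begin{equation*}
\tfrac{dA}{dz} = Y_g\Bigl(\bigl(\bigl(\tfrac{d}{dz}\Delta_k(z)^{-1} + L(-1)\Delta_k(z)^{-1}\bigr)u\bigr)^1,\,z\Bigr).
\end{equation*}

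Then I invoke (\ref{second identity in voa}) rewritten as
\begin{equation*}
\tfrac{d}{dz}\Delta_k(z)^{-1} + L(-1)\Delta_k(z)^{-1} \; = \; \tfrac{1}{k}z^{1/k-1}\,\Delta_k(z)^{-1}L(-1),
\end{equation*}
which collapses the combined operator back to $\Delta_k(z)^{-1}L(-1)$ up to an explicit scalar factor. Substituting $z=x^k$ turns $\frac{1}{k}z^{1/k-1}$ into $\frac{1}{k}x^{1-k}$, and multiplying by the chain-rule factor $kx^{k-1}$ yields exactly $1$. What remains is $Y_g\bigl((\Delta_k(x^k)^{-1}L(-1)u)^1,\,x^k\bigr)$, which is $Y_\sigma(L(-1)u,x)$ by definition. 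Since $\frac{d}{dx}((x^k)^{1/k}) = 1$ under the chosen branch, both stated equalities follow at once.

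The main obstacle is bookkeeping rather than substance: one must verify that every substitution of $z = x^k$ remains a well-defined formal Laurent series in $x^{1/2}$ under the branch convention $(x^k)^{1/2k} = x^{1/2}$, so that the fractional powers appearing in $\Delta_k(z)^{-1}$ (coming from $z^{(k-1)L(0)/k}$) are evaluated consistently. All other ingredients — the $L(-1)$-derivative property for $Y_g$, the behavior of $L(-1)$ on $v^1$, and equation (\ref{second identity in voa}) — apply directly without modification.
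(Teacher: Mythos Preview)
Your proposal is correct and follows essentially the same approach as the paper: both arguments differentiate $Y_g((\Delta_k(x^k)^{-1}u)^1,x^k)$ by the chain rule, split the derivative into the contribution from $\Delta_k(x^k)^{-1}$ and the contribution from the second slot (via the $L(-1)$-derivative property for $Y_g$), and then combine using identity (\ref{second identity in voa}) after the substitution $z\mapsto x^k$. The paper also records the general $\eta^j$ factor arising from other branches before specializing to $(x^k)^{1/k}=x$, but this is exactly the bookkeeping you flag in your final paragraph.
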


\begin{proof} The proof is similar to that of Lemma \ref{l3.1}.
By Lemma \ref{c2.5} we have
\[\D(x)^{-1}L(-1)-kx^{-1/k + 1}L(-1)\D(x)^{-1} = kx^{-1/k 
+ 1}\frac{d}{dx}\D(x)^{-1}.\]
Making the change of variable $x\to x^k$ gives 
\[\D(x^k)^{-1} L(-1) - k(x^k)^{-1/k} x^k L(-1) \D(x^k)^{-1} =
(x^k)^{-1/k}x\frac{d}{dx}\D(x^k)^{-1}.\]
Thus if $(x^k)^{1/k} = \eta^j x$, we have
\begin{eqnarray*}
\lefteqn{\frac{d}{dx}Y_g((\D(x^k)^{-1}u)^1,x^k)}  \\
&=& Y_g((\frac{d}{dx}\D(x^k)^{-1}u)^1,z^k)+\frac{d}{dx}
Y_g((\D(z^k)^{-1}u)^1,x^k)|_{x=z}\\
&=& Y_g((\frac{d}{dx}\D(x^k)^{-1}u)^1,x^k)+
kx^{k-1}Y_g(L(-1)(\D(x^k)^{-1}u)^1,x^k)\\
&=& \eta^j Y_g((\D(x^k)^{-1}L(-1)u)^1,x^k).
\end{eqnarray*}
Since by definition $Y_\sigma(u,x) = Y_g((\D(x^k)^{-1}u)^1,x^k)$ with
$(x^k)^{1/k} = x$, the result follows. \end{proof}

\begin{lem}\label{l4.2} Let $u,v\in V$.  Then on $M_g$, we have the supercommutator
\begin{multline}
[Y_\sigma (u,x_1),Y_\sigma (v,x_2)] \\
=
\Res_{x_0} x_2^{-1}\delta\left(\frac{x_1-x_0}{x_2}\right) \left(\frac{x_1-x_0}{x_2}\right)^{\frac{1}{2}(1-k)|u|} 
Y_\sigma (Y(u, x_0)v,x_2),
\end{multline}
i.e.,
\begin{multline}
[Y_\sigma (u,x_1), Y_\sigma (v,x_2) ] \\
= \left\{ \begin{array}{ll}
\Res_{x_0} x_2^{-1}\delta\left(\frac{x_1-x_0}{x_2}\right) \left(\frac{x_1-x_0}{x_2}\right)^{\frac{|u|}{2}} Y_\sigma (Y(u, x_0)v,x_2) & \mbox{if $k$ is even}\\
\Res_{x_0} x_2^{-1}\delta\left(\frac{x_1-x_0}{x_2}\right) Y_\sigma (Y(u, x_0)v,x_2) & \mbox{if $k$ is odd}
\end{array} \right. .
\end{multline}
\end{lem}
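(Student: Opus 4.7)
The plan is to invert the strategy of Lemma \ref{l3.2}. In that lemma, one begins from the $\sigma$-twisted supercommutator on $M_\sigma$ and derives the supercommutator for $\bar{Y}_\sigma$. Here, one instead begins from the general supercommutator \eqref{k2} for the $g$-twisted vertex operators $Y_g$ on the arbitrary weak $g$-twisted $V^{\otimes k}$-module $M_g$, which is a consequence of the twisted Jacobi identity applied to $u^1$ and $v^1$, and derives the $\sigma$-twisted supercommutator for $Y_\sigma$ via the definition $Y_\sigma(u,x) = Y_g((\Delta_k(x^k)^{-1}u)^1, x^k)$.

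Concretely, I would apply \eqref{k2} extended linearly to $V$-valued formal series, with $u$ replaced by $\Delta_k(x_1^k)^{-1}u$, with $v$ replaced by $\Delta_k(x_2^k)^{-1}v$, and with the two $Y_g$-arguments set to $x_1^k$ and $x_2^k$ (so that $(x_i^k)^{1/k} = x_i$ in our convention). The left-hand side then equals $[Y_\sigma(u,x_1), Y_\sigma(v,x_2)]$ by definition. Next, I would perform the change of variable $z_0 = x_1^k - (x_1-x_0)^k$ in the residue, so that $(x_1^k - z_0)^{1/k} = x_1-x_0$ and the Jacobian is $k(x_1-x_0)^{k-1}$; combined with the prefactor $x_2^{-k}$ and the identity $x_2^{-k}(x_1-x_0)^{k-1}\delta(\frac{x_1-x_0}{x_2}) = x_2^{-1}\delta(\frac{x_1-x_0}{x_2})$ (valid since $k-1$ is an integer), the prefactor collapses to $x_2^{-1}\delta(\frac{x_1-x_0}{x_2})$. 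I would then substitute $x_1 \to x_2 + x_0$ inside the remaining expression. The subtle point is that $\Delta_k(x_1^k)^{-1} u$ carries a leading scalar factor $x_1^{(k-1)p}$ (with $p = \wt u$) whose exponent is non-integer when $u$ is odd and $k$ is even; for such powers the naive substitution fails, and one should instead use the identity
\begin{equation*}
x_2^{-1}\delta\left(\frac{x_1-x_0}{x_2}\right) x_1^\alpha \; = \; \left(\frac{x_1-x_0}{x_2}\right)^\alpha (x_2+x_0)^\alpha \, x_2^{-1}\delta\left(\frac{x_1-x_0}{x_2}\right),
\end{equation*}
which is derivable from \eqref{delta-function1}. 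This produces an extra factor $(\frac{x_1-x_0}{x_2})^{(k-1)p}$; since both $(k-1)(p-|u|/2)$ and $(k-1)|u|$ are integers, absorption by the delta of integer powers of $(\frac{x_1-x_0}{x_2})$ reduces this factor to the claimed $(\frac{x_1-x_0}{x_2})^{(1-k)|u|/2}$.

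Finally, after the substitution the inside of the $Y_g$ becomes $Y(\Delta_k((x_2+x_0)^k)^{-1}u, (x_2+x_0)^k - x_2^k) \Delta_k(x_2^k)^{-1}v$. I would apply Proposition \ref{psun1} with $z = x_2^k$ and $z_0 = (x_2+x_0)^k - x_2^k$, so that $z+z_0 = (x_2+x_0)^k$ and $(z+z_0)^{1/k} - z^{1/k} = x_0$; a direct rearrangement then gives $\Delta_k(x_2^k)^{-1} Y(u,x_0)v$, and the outer $Y_g((\,\cdot\,)^1, x_2^k)$ recovers $Y_\sigma(Y(u,x_0)v, x_2)$ by the definition of $Y_\sigma$, completing the derivation. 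The main obstacle will be the delta-function substitution step above, namely carefully tracking how the non-integer $x_1$-powers in $\Delta_k(x_1^k)^{-1}u$ for $u$ odd and $k$ even produce the factor $(\frac{x_1-x_0}{x_2})^{(1-k)|u|/2}$; all other steps are close analogues of calculations already carried out in Lemmas \ref{l3.2} and \ref{l3.7}.
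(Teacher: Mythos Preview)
Your proposal is correct and follows essentially the same route as the paper: start from the $g$-twisted supercommutator \eqref{k2}, substitute $(\Delta_k(x_i^k)^{-1}u)^1$ and arguments $x_i^k$, perform the residue change of variable $x=x_1^k-(x_1-x_0)^k$, collapse the prefactor to $x_2^{-1}\delta(\tfrac{x_1-x_0}{x_2})$, handle the delta substitution $x_1\to x_2+x_0$ (picking up the non-integer factor), and finish with Proposition~\ref{psun1}. The only cosmetic difference is that the paper first rewrites $x_2^{-1}\delta(\tfrac{x_1-x_0}{x_2})$ as $x_1^{-1}\delta(\tfrac{x_2+x_0}{x_1})$ before substituting, whereas you invoke the equivalent identity for $x_1^\alpha$ directly; both yield the same extra factor modulo integer powers absorbed by the delta.
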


\begin{proof} The proof is similar to the proof of Lemma \ref{l3.2} and is analogous to the proof of Lemma 4.2 in \cite{BDM}, but with the significant change that we go from $g$-twisted operators to $\sigma$-twisted operators, rather than from $g$-twisted operators to untwisted operators.

From the 
twisted Jacobi identity on $(M_g, Y_g)$, we have
\begin{equation}\label{commutator for Lemma 4.2}
 [Y_g(u^1,x_1),Y_g(v^1,x_2)] \; = \; \Res_{x_0}\frac{1}{k}x_2^{-1}
\delta\Biggl(\frac{(x_1-x_0)^{1/k}}{x_2^{1/k}}\Biggr)Y_g(
Y(u^1,x_0)v^1,x_2).
\end{equation}
Therefore, 
\begin{eqnarray*}
\lefteqn{[Y_\sigma(u,x_1),Y_\sigma(v,x_2)]} \\
&=& [Y_g((\D(x^k_1)^{-1}u)^1,x_1^k),Y_g((\D(x_2^k)^{-1}v)^1,x_2^k)] \\
&=& \Res_{x}\frac{1}{k} x_2^{-k} \delta \left( 
\frac{(x_1^k - x)^{1/k}}{x_2} \right)  Y_g( Y( 
(\D(x_1^k)^{-1}u)^1,x) (\D(x_2^k)^{-1}v)^1,x_2^k).
\end{eqnarray*}

We want to make the change of variable $x=x_1^k-(x_1-x_0)^{k}$ where we
choose $x_0$ such that $((x_1 - x_0)^k)^{1/k} = x_1 - x_0$. Then noting
that $(x_1^k - x)^{n/k} |_{x = x_1^k-(x_1-x_0)^{k}} = (x_1 - x_0)^n$ for 
all $n \in \mathbb{Z}$, and using (\ref{residue change of variables}), 
we have
\begin{eqnarray*}
\lefteqn{[Y_\sigma(u,x_1),Y_\sigma(v,x_2)]} \\
&=& \Res_{x_0} x_2^{-k} (x_1-x_0)^{k-1} \delta \left( \frac{
x_1-x_0}{x_2} \right) Y_g( Y((\D(x_1^k)^{-1}u)^1,x_1^k -
(x_1-x_0)^{k})\\
& & \quad (\D(x_2^k)^{-1}v)^1,x_2^k)\\
&=& \Res_{x_0}  x_2^{-1}\delta\left(\frac{x_1-x_0}{x_2}\right) 
Y_g( Y( (\D(x_1^k)^{-1}u)^1, x_1^k - (x_1-x_0)^{k})\\
& & \quad (\D(x_2^k)^{-1}v)^1,x_2^k)\\
&=& \Res_{x_0}  x_1^{-1}\delta\left(\frac{x_2+x_0}{x_1}\right) 
Y_g( Y( (\D(x_1^k)^{-1}u)^1, x_1^k - (x_1-x_0)^{k})\\
& & \quad (\D(x_2^k)^{-1}v)^1,x_2^k)\\
&=& \Res_{x_0} x_1^{-1}\delta\left(\frac{x_2+x_0}{x_1}\right) \left(\frac{x_2+x_0}{x_1}\right)^{\frac{1}{2}(1-k)|u|} 
Y_g((Y(\D((x_2+x_0)^k)^{-1}u, \\
& & \quad (x_2+x_0)^k-x_2^{k})  \D(x_2^k)^{-1}v)^1,x_2^k)\\
&=& \Res_{x_0}  x_2^{-1} \delta \left( \frac{x_1-x_0}{x_2}\right) \left(\frac{x_1-x_0}{x_2}\right)^{\frac{1}{2}(k-1)|u|} 
Y_g((Y(\D((x_2+x_0)^k)^{-1}u, \\
& & \quad (x_2+x_0)^k-x_2^{k}) \D(x_2^k)^{-1}v)^1,x_2^k).
\end{eqnarray*}

Thus the proof is reduced to proving
\begin{eqnarray*} 
Y(\D((x_2+x_0)^k)^{-1}u,(x_2+x_0)^k - x_2^{k})\D(x_2^k)^{-1} = \D(x_2^k)^{-1} Y
\left(u, x_0 \right),
\end{eqnarray*} 
i.e., proving
\begin{equation}\label{3.10}
\D(x_2^k)Y(\D((x_2+x_0)^k)^{-1}u,(x_2+x_0)^k-
x_2^{k})\D(x_2^k)^{-1} =  Y \left(u,  x_0 \right).
\end{equation}
In Proposition \ref{psun1}, substituting $u$, $z$ and $z_0$ with
$ \D((x_2+x_0)^k)^{-1}u,$ $x_2^k$ and $(x_2+x_0)^k
- x_2^{k}$, respectively, gives equation (\ref{3.10}).
\end{proof}

Let $(M_g,Y_g)$ be a weak $g$-twisted $V$-module, for $k$ a positive even integer, and $g = (1 \; 2 \; \cdots \; k)$.  Define $U_g^k(M_g,Y_g) = (U_g^k(M_g),
Y_\sigma) = (M_g, Y_\sigma)$.  That is $U_g^k(M_g, Y_g)$ is $M_g$ as the underlying vector space and the $\sigma$-twisted vertex operator $Y_\sigma$ is given by (\ref{define-sigma-twist}).

\begin{thm}\label{t4.l} Given a  weak $g$-twisted $V$-module $(M_g,Y_g)$, with the notations as above, $U_g^k(M_g,Y_g) = 
(U_g^k(M_g),Y_\sigma) = (M_g,Y_\sigma)$ is a weak $\sigma$-twisted $V$-module.
\end{thm}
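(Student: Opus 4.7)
The plan is to verify, in order, the vacuum axiom, the truncation axiom, the parity compatibility, the $L(-1)$-derivative property, and finally the twisted Jacobi identity for $Y_\sigma$ acting on $M_g$. The strategy parallels the proofs of the corresponding results in \cite{BDM} and \cite{B-superpermutation-odd}, but replacing the role of untwisted vertex operators on $M_g$ by $\sigma$-twisted ones, which is precisely why the hypothesis that $k$ is even is needed.

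First, since $L(j)\mathbf{1}=0$ for all $j\ge 0$, the defining formula (\ref{Delta-inverse}) gives $\Delta_k(x^k)^{-1}\mathbf{1}=\mathbf{1}$, hence $Y_\sigma(\mathbf{1},x)=Y_g(\mathbf{1}^{1},x^k)=\mathrm{Id}_{M_g}$. For truncation, take $u\in V_p$ homogeneous. The expansion $\Delta_k(x^k)^{-1}u=\sum_{j\ge 0}u[j]\,x^{(k-1)p-kj}$ with $u[j]\in V_{p-j}$ (computed in the proof of Theorem \ref{main1}(2)) expresses $Y_\sigma(u,x)w$ as a sum, indexed by $j$, of truncated-in-$x^k$ series $Y_g(u[j]^1,x^k)w$ times a monomial in $x$; each fixed power of $x^{-1}$ in $Y_\sigma(u,x)w$ receives contributions from only finitely many $j$, so the truncation property of $Y_g$ on $M_g$ yields truncation for $Y_\sigma$. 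Parity compatibility is similarly straightforward: $\Delta_k(x^k)^{-1}$ is built from the even operators $L(j)$ together with scalar powers of $L(0)$, all of which preserve the $\mathbb{Z}_2$-grading of $V$, so $\Delta_k(x^k)^{-1}u$ has the same parity as $u$; since $(M_g,Y_g)$ is a weak $g$-twisted $V^{\otimes k}$-module, the modes of $Y_g((\Delta_k(x^k)^{-1}u)^1,x^k)$ inherit the parity of $u$. The $L(-1)$-derivative property is exactly Lemma \ref{l4.1}.

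The main obstacle is verifying the twisted Jacobi identity for $Y_\sigma$. The plan is to derive it directly from the twisted Jacobi identity for $Y_g$ on $M_g$ applied to the generators $(\Delta_k(x_1^k)^{-1}u)^1$ and $(\Delta_k(x_2^k)^{-1}v)^1$, in close analogy with the computation performed in Lemma \ref{l4.2} for the supercommutator. Starting from the generalized twisted Jacobi identity for $Y_g$ and making the change of variable $x=x_1^k-(x_1-x_0)^k$ with the convention $((x_1-x_0)^k)^{1/k}=x_1-x_0$, one reduces the $k$-fold $\delta$-sum on the right-hand side to the single term $x_2^{-1}\delta((x_1-x_0)/x_2)$ by the identity (\ref{delta-function2}); the parity-twisted factor $((x_1-x_0)/x_2)^{-|u|/2}$ then emerges from the Jacobian $(x_1-x_0)^{k-1}$ combined with the $(1-k)|u|/2$ exponent produced by moving $\Delta_k$-factors through the vertex operator via Proposition \ref{psun1}. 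Exactly as in the proof of Lemma \ref{l4.2}, the critical step where the evenness of $k$ is used is the clean cancellation $\tfrac{1}{2}(1-k)|u|+\tfrac{1}{2}(k-1)|u|\cdot \bullet$ that produces the correct exponent $-|u|/2$ in $((x_1-x_0)/x_2)$; for $k$ odd an extra $\eta^{j|u|}$ obstruction survives, exactly mirroring Remark \ref{generalized-remark}.

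Concretely, after the change of variable I expect the two sides to collapse, using Proposition \ref{psun1} applied to both $x_1^k$ and $(x_2+x_0)^k-x_2^k$ simultaneously, to
\[
x_2^{-1}\Bigl(\tfrac{x_1-x_0}{x_2}\Bigr)^{-|u|/2}\delta\Bigl(\tfrac{x_1-x_0}{x_2}\Bigr)Y_\sigma(Y(u,x_0)v,x_2),
\]
which is the right-hand side of (\ref{twisted-Jacobi-eigenspace}) for the parity automorphism $\sigma$. The two $\delta$-function terms on the left-hand side of the Jacobi identity on $M_g$ translate into the corresponding two terms on the left-hand side for $Y_\sigma$ by using the three-term $\delta$-function identity (\ref{three-term-delta}) and the equivalent form (\ref{a3.3}) of locality, exactly as in the computation inside the proof of Lemma \ref{l3.7}. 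Assembling these pieces gives the $\sigma$-twisted Jacobi identity and completes the verification that $(M_g,Y_\sigma)$ is a weak $\sigma$-twisted $V$-module.
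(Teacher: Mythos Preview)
Your treatment of the vacuum, truncation, parity compatibility, and $L(-1)$-derivative axioms is fine and in fact more explicit than the paper, which takes these for granted and jumps straight to the Jacobi identity.

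However, there is a genuine gap in your argument for the twisted Jacobi identity. The change-of-variable technique $x=x_1^k-(x_1-x_0)^k$ that you invoke is exactly what drives the proof of Lemma~\ref{l4.2}, but that lemma is a \emph{residue} computation: one takes $\Res_x$ of the $g$-twisted Jacobi identity, and residues behave well under invertible formal substitutions via (\ref{residue change of variables}). The full Jacobi identity is an identity of formal distributions in three variables, not a residue, and the substitution you describe is not a well-defined operation on the left-hand side $\delta$-functions. Concretely, after replacing $x_1\to x_1^k$ and $x_2\to x_2^k$, the left-hand side carries factors of the form $x^{-1}\delta\bigl((x_1^k-x_2^k)/x\bigr)$; your substitution would formally produce expressions in $x_1^k-x_2^k$, not in $x_1-x_2$, so it does not collapse to the $\sigma$-twisted Jacobi left-hand side. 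The appeal to ``the three-term $\delta$-function identity and locality, exactly as in the computation inside the proof of Lemma~\ref{l3.7}'' does not bridge this: that computation lives in the local system $A$ and manipulates an already-reduced expression $X$, not the raw left-hand side of a Jacobi identity under a variable substitution.

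The paper's proof avoids this by using the standard equivalence of the twisted Jacobi identity with the pair (supercommutator formula) $+$ (weak associativity). The supercommutator is exactly Lemma~\ref{l4.2} in the case $k$ even. The substantive remaining step, which your proposal omits, is the associator: for $u,v\in V$ and $w\in M_g$ there exists $n$ such that
\[
(x_0+x_2)^{|u|/2+n}\,Y_\sigma(u,x_0+x_2)Y_\sigma(v,x_2)w
=(x_2+x_0)^{|u|/2+n}\,Y_\sigma(Y(u,x_0)v,x_2)w.
\]
The paper obtains this from the associator for $Y_g$ on $M_g$, applied separately to each $g$-eigencomponent $u^1_{(j)}$, then substituting $x_2\to x_2^k$ and $x_0\to(x_0+x_2)^k-x_2^k$. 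Because $\Delta_k(x^k)^{-1}u$ and $\Delta_k(x^k)^{-1}v$ are \emph{finite} sums of homogeneous vectors times monomials in $x$ (with an overall $x^{-|u|/2}$ when $k$ is even), one can choose a single $n$ that works simultaneously for all terms, and then Proposition~\ref{psun1} (in the form of equation~(\ref{3.10})) collapses the iterate to $Y_\sigma(Y(u,x_0)v,x_2)$. This associator step is where the actual analytic control over powers of $x_0+x_2$ versus $x_2+x_0$ is established; it cannot be replaced by the formal change of variable you propose.
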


\begin{proof} Since the $L(-1)$-derivation property has been proved for $Y_\sigma$ in 
Lemma \ref{l4.1}, we only need to prove the twisted Jacobi identity.   In this setting, the twisted Jacobi identity is equivalent to the supercommutator formula given by Lemma  \ref{l4.2} for the case when $k$ is even, and the associator  formula.  The associator formula states that for $u,v\in V$ and $w \in U_g^k(M_g)$, there exists  a positive integer $n$ such that
\[(x_0+x_2)^{|u|/2 + n}Y_\sigma(u,x_0+x_2)Y_\sigma(v,x_2)w=(x_2+x_0)^{|u|/2 + n}Y_\sigma(Y(u,x_0)v,x_2)w .\]
Here we are using the fact that the eigenspaces for the parity automorphism $\sigma$ are given by $V^0 = V^{(0)}$ and $V^1 = V^{(1)}$.  

%The rest of the proof is analogous to the proof of Theorem 4.3 in \cite{BDM} where we use 
%Proposition \ref{psun1} above instead of Proposition 2.2 in \cite{BDM}.  

Write $u^1=\sum_{j=0}^{k-1}u^1_{(j)}$ where $gu^1_{(j)} =
\eta^ju^1_{(j)}$.  Then {}from the twisted Jacobi identity, we have the
following associator: There exists a positive integer $m$ such that
for $n\geq m,$
\[(x_0+x_2)^{j/k+n}Y_g(u^1_{(j)},x_0+x_2)Y_g(v^1,x_2)w=
(x_2+x_0)^{j/ k+n}Y_g(Y(u^1_{(j)},x_0)v^1,x_2)w\]
for $j=0,...,k-1$.  Replacing $x_2$ by $x_2^k$ and $x_0$ by $(x_0+x_2)^k-x_2^k$ gives 
\begin{multline*}
(x_0+x_2)^{j+kn}Y_g(u^1_{(j)},(x_0+x_2)^k)Y_g(v^1,x_2^k)w\\
= (x_2+x_0)^{j+kn}Y_g(Y(u^1_{(j)},(x_2+x_0)^k-x_2^k)v^1,x_2^k)w.
\end{multline*}

Note that if $a\in \vtk$ such that $ga=\eta^ja$, then $Y_g(a,x)=\sum_{l\in
j/k+\Z}a_nx^{-l-1}$. Thus there exists a positive integer $m_j$
such that if $n_j\geq m_j$, then 
\begin{multline*}
(x_0+x_2)^{n_j}Y_g(u^1_{(j)},(x_0+x_2)^k)Y_g(v^1,x_2^k)w\\
=(x_2+x_0)^{n_j} Y_g(Y(u^1_{(j)},(x_2+x_0)^k-x_2^k)v^1,x_2^k)w
\end{multline*}
for $j=0,...,k-1$.  As a result we see that there exists a positive integer 
$m$ such that if $n\geq m$, then 
\begin{multline*}
(x_0+x_2)^{n}Y_g(u^1,(x_0+x_2)^k)Y_g(v^1,x_2^k)w\\
= (x_2+x_0)^{n}Y_g(Y(u^1,(x_2+x_0)^k-x_2^k)v^1,x_2^k)w.
\end{multline*}

Note that $\Delta_k(x)^{-1} u \in (x^{\frac{1}{2k}})^{(k-1)2 \mathrm{wt} \, u} V[x^{-1}]$.  
Thus for $k$ even, we have that $\Delta_k(x^k)^{-1} u \in x^{-|u|/2} V[x, x^{-1}]$.  Therefore we can write $\Delta_k((x_0+x_2)^k)^{-1}u = (x_0 + x_2)^{-|u|/2}\sum_{j \in \mathbb{N}} u_j(x_0+x_2)^{s_j}$ for some $u_j\in V$ and integers $s_j \in \mathbb{Z}$, and note that this is a finite sum.  Similarly we have a finite sum
$\Delta_k(x_2^k)^{-1}v=x_2^{-|v|/2} \sum_{j\in \mathbb{N}}v_j x_2^{t_j}$ for some $v_j \in
V$ and $t_j \in \mathbb{Z}$.  Thus there exists a positive integer
$m$ such that if $n\geq m$, then
\begin{multline*}
(x_0+x_2)^{n+s_i}Y_g(u_i^1,(x_0+x_2)^k)Y_g(v_j^1,x_2^k)w\\
=(x_2+x_0)^{n+s_i}Y_g(Y(u_i^1,(x_2+x_0)^k-x_2^k)v_j^1,x_2^k)w
\end{multline*}
for all $i,j\in \mathbb{N}$.  Finally, using equation (\ref{3.10}), 
we have for $n\geq m,$ 
\begin{eqnarray*}
\lefteqn{(x_0+x_2)^{|u|/2 + n} Y_\sigma(u,x_0+x_2)Y_\sigma(v,x_2)w} \\
&=& (x_0+x_2)^{|u|/2 + n} Y_g((\D((x_0+x_2)^k)^{-1}u)^1,
(x_0+x_2)^k)\\
& & \quad Y_g((\D(x_2^k)^{-1}v)^1,x_2^k) w\\
&=&  \sum_{i,j \geq 0}(x_0+x_2)^{n+s_i}x_2^{-|v|/2 + t_j}Y_g(u_i^1,(x_0+x_2)^k)
Y_g(v_j^1,x_2^k)w\\
&=&  \sum_{i,j \geq 0}(x_2+x_0)^{n+s_i}x_2^{-|v|/2 + t_j}Y_g(Y(u_i^1,
(x_2+x_0)^k-x_2^k)v_j^1,x_2^k)w\\
&=& (x_2+x_0)^{ |u|/2 + n} Y_g(Y(\D((x_2+x_0)^k)^{-1}u)^{1},(x_2+x_0)^k-x_2^k)\\
& & \quad 
(\D(x_2^k)^{-1}v)^1,x_2^k)w\\
&=& (x_2+x_0)^{|u|/2 + n}  Y_g((\D(x_2^k)^{-1}Y(u,x_0)v)^1,x_2^k)w\\
&=& (x_2+x_0)^{|u|/2 + n}  Y_\sigma(Y(u,x_0)v,x_2)w
\end{eqnarray*}
completing the proof.
\end{proof}

\begin{thm}\label{t4.ll} 
For $k$ an even positive integer, and $g = (1 \; 2\; \cdots \; k)$, the map
$U_g^k$ is a functor {}from the category $\mathcal{ C}_w^g(\vtk)$ of
weak $g$-twisted $\vtk$-modules to the category $\mathcal{ C}_w^\sigma(V)$ of
weak $\sigma$-twisted $V$-modules such that $T_g^k \circ U_g^k = 
id_{\mathcal{ C}_w^g(\vtk)}$ and $U_g^k \circ T_g^k = 
id_{\mathcal{ C}_w^\sigma(V)}$.  In particular, the categories 
$\mathcal{ C}_w^g(\vtk)$ and $\mathcal{ C}_w^\sigma(V)$ are isomorphic. Moreover,

(1) The restrictions of $T_g^k$ and $U_g^k$ to the category of
admissible $\sigma$-twisted $V$-modules $\mathcal{ C}_a^\sigma(V)$ and to the category of
admissible $g$-twisted $\vtk$-modules $\mathcal{ C}_a^g(\vtk)$,
respectively, give category isomorphisms. In particular, $\vtk$ is $g$-rational if and only if $V$ is $\sigma$-rational.

(2) The restrictions of $T_g^k$ and $U_g^k$ to the category of
ordinary $\sigma$-twisted $V$-modules $\mathcal{ C}^\sigma(V)$ and to the category of ordinary $g$-twisted $\vtk$-modules $\mathcal{ C}^g(\vtk)$, respectively, give
category isomorphisms.
\end{thm}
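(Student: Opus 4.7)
The plan is to leverage the constructions already in hand: Theorem \ref{main1} and Corollary \ref{c3.10} provide the functor $T_g^k$, and Theorem \ref{t4.l} provides a well-defined weak $\sigma$-twisted $V$-module structure $U_g^k(M_g,Y_g)=(M_g,Y_\sigma)$. The only thing needed to complete the functoriality of $U_g^k$ is its action on morphisms: given $f\colon M_g\to M_g'$ with $fY_g(v,x)=Y_g'(v,x)f$ for all $v\in V^{\otimes k}$, set $U_g^k(f)=f$. Since $Y_\sigma(u,x)$ is defined by evaluating $Y_g$ on the specific element $(\Delta_k(x^k)^{-1}u)^1\in V^{\otimes k}[[x^{1/2},x^{-1/2}]]$, the intertwining relation $fY_\sigma(u,x)=Y_\sigma'(u,x)f$ is immediate, and preservation of composition/identities is trivial.

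The core of the theorem is the pair of identities $U_g^k\circ T_g^k=\mathrm{id}$ and $T_g^k\circ U_g^k=\mathrm{id}$. For $U_g^k\circ T_g^k$, starting from $(M_\sigma,Y_\sigma)$ and applying $T_g^k$ yields the operators $Y_g(u^1,x)=Y_\sigma(\Delta_k(x)u,x^{1/k})$; equivalently, by (\ref{for-grading}), $Y_g((\Delta_k(x)^{-1}u)^1,x)=Y_\sigma(u,x^{1/k})$. Replacing $x$ by $x^k$ in this identity and using the branch $(x^k)^{1/k}=x$ from the definition of $Y_\sigma$ gives
\[
Y_g((\Delta_k(x^k)^{-1}u)^1,x^k)=Y_\sigma(u,x),
\]
which is exactly the statement $(U_g^k\circ T_g^k)(Y_\sigma)=Y_\sigma$. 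For $T_g^k\circ U_g^k$, starting from $(M_g,Y_g)$ and applying $U_g^k$ produces $Y_\sigma(u,x)=Y_g((\Delta_k(x^k)^{-1}u)^1,x^k)$; applying $T_g^k$ produces $\widetilde{Y}_g(u^1,x)=Y_\sigma(\Delta_k(x)u,x^{1/k})=Y_g((\Delta_k(x)^{-1}\Delta_k(x)u)^1,x)=Y_g(u^1,x)$. Since any weak $g$-twisted $V^{\otimes k}$-module structure is completely determined by $Y_g(u^1,x)$ for $u\in V$ (the operators $Y_g(u^{j+1},x)$ are obtained from $Y_g(u^1,x)$ via the limits $x^{1/k}\mapsto\eta^jx^{1/k}$, and arbitrary $Y_g(v,x)$ for $v\in V^{\otimes k}$ arise through Lemma \ref{l3.6}), it follows that $\widetilde{Y}_g=Y_g$ on all of $V^{\otimes k}$.

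For the admissible case, recall that the proof of Theorem \ref{main1}(2) constructed the $\frac{1}{k}\mathbb{N}$-grading on $T_g^k(M_\sigma)$ by setting $T_g^k(M_\sigma)(n/k)=M_\sigma(n)$, and conversely, for a weak admissible $g$-twisted $V^{\otimes k}$-module (which by Remark \ref{even-grading-remark} may be taken to be $\frac{1}{k}\mathbb{N}$-graded), the $\mathbb{N}$-grading $U_g^k(M_g)(n)=M_g(n/k)$ gives a weak admissible $\sigma$-twisted $V$-module; these grading assignments are mutually inverse, and morphism preservation of grading transfers automatically because $U_g^k(f)=f$ as linear maps. The $g$-rationality/$\sigma$-rationality equivalence is then formal from the category isomorphism. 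For ordinary modules, the single key input is equation (\ref{L(0)-conversion}): $L^g(0)=\frac{1}{k}L^\sigma(0)+\frac{(k^2-1)c}{24k}$. This gives a bijection between $L^\sigma(0)$-eigenspaces of $M_\sigma$ and $L^g(0)$-eigenspaces of $T_g^k(M_\sigma)$ via $\lambda\mapsto \lambda/k+(k^2-1)c/(24k)$, and preserves both finite-dimensionality of eigenspaces and the grading-bounded-below condition.

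The main subtlety I anticipate is the careful bookkeeping of the multi-valued expressions $x^{1/k}$ and $(x^k)^{1/k}$ appearing in the two constructions. Because $\Delta_k(x)^{-1}u$ contains half-integer powers $x^{(1-k)|u|/2k}$ (for $u$ odd) and $k$ is even, $Y_g((\Delta_k(x^k)^{-1}u)^1,x^k)$ is genuinely multivalued in $(x^k)^{1/2k}$, and the entire construction of $Y_\sigma$ hinges on selecting the branch $(x^k)^{1/2k}=x^{1/2}$. When composing functors, one must verify that the branch choices are compatible at each substitution step---this is precisely where an even cycle $k$ behaves differently from an odd one and where Remark \ref{wrong-space-remark} becomes critical. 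Once the branches are tracked, the compositional inverse property $\Delta_k(x)\Delta_k(x)^{-1}=1$ makes the identification transparent.
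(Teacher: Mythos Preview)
Your proposal is correct and follows essentially the same approach as the paper's own proof, which simply states that the composition identities $T_g^k\circ U_g^k=\mathrm{id}$ and $U_g^k\circ T_g^k=\mathrm{id}$ are ``trivial to verify from the definitions of the functors $T_g^k$ and $U_g^k$'' and that parts (1) and (2) ``follow from Theorem \ref{main1}.'' You have supplied the explicit verification the paper omits---in particular the use of (\ref{for-grading}) for one composite and the cancellation $\Delta_k(x)^{-1}\Delta_k(x)=1$ for the other, together with the grading correspondences and (\ref{L(0)-conversion}) already established in the proof of Theorem \ref{main1}---so your argument is a faithful unpacking of the paper's terse proof rather than a different route.
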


\begin{proof} It is trivial to verify $T_g^k \circ U_g^k = id_{\mathcal{
C}_w^g(\vtk)}$ and $U_g^k \circ T_g^k = id_{\mathcal{ C}_w^\sigma(V)}$ {}from the
definitions of the functors $T_g^k$ and $U_g^k$.  Parts (1) and (2) follow {}from Theorem \ref{main1}.  \end{proof}

Using the functor $T_g^k$ giving the isomorphism between the categories $\mathcal{C}^\sigma(V)$ and $\mathcal{C}^g(V^{\otimes k})$ as well as the actual construction of $g$-twisted $V^{\otimes k}$-modules from $\sigma$-twisted $V$-modules, we have a correspondence between graded traces of modules in $\mathcal{C}^\sigma (V)$ and modules in $\mathcal{C}^g(V^{\otimes k})$.  In particular, from (\ref{L(0)-conversion}), we have the following corollary. 

\begin{cor}\label{graded-dimension-corollary}  Let $g = (1 \; 2 \; \cdots \; k)$ for $k$ even.  Then $(M_\sigma, Y_\sigma)$ is an ordinary $\sigma$-twisted $V$-module with graded dimension 
\[ \mathrm{dim}_q M_\sigma =  tr_{M_\sigma}  q^{-c/24 + L^\sigma (0)} = q^{-c/24} \sum_{\lambda \in \mathbb{C}} \mathrm{dim} (M_\lambda) q^\lambda \]
if and only if $(T_g^k(M_\sigma), Y_g)$ is an ordinary $(1 \; 2 \; \cdots \; k)$-twisted $V^{\otimes k}$-module with graded dimension
\begin{eqnarray*}\mathrm{dim}_q T_g^k(M_\sigma) &=& tr_{T_g^k(M_\sigma)} q^{-kc/24 + L^g(0)} = tr_{T_g^k(M_\sigma)} q^{-kc/24 +\frac{1}{k} L^\sigma (0) + (k^2 - 1)c/24k} \\
&=&  \mathrm{dim}_{q^{1/k}} M_\sigma .
\end{eqnarray*}
\end{cor}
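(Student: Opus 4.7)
The plan is to reduce the statement to a short algebraic identity relating $L^\sigma(0)$ and $L^g(0)$, leveraging the machinery already established in Theorem \ref{main1} and Corollary \ref{c3.10}. First I would invoke Corollary \ref{c3.10} (which follows from Theorem \ref{main1}(3)): the functor $T_g^k$ restricts to a bijection between ordinary $\sigma$-twisted $V$-modules and ordinary $(1\;2\;\cdots\;k)$-twisted $V^{\otimes k}$-modules. This immediately yields the ``if and only if" at the level of module structure, so that both graded traces are well-defined formal series (convergence of each coefficient follows from the finite-dimensionality of each $L(0)$-eigenspace and the lower bound on the grading built into the definition of an ordinary twisted module).

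The heart of the argument is a short computation of exponents. The vertex operator superalgebra $V^{\otimes k}$ has central charge $kc$, so the natural normalization on the twisted side is $q^{-kc/24 + L^g(0)}$. Using the identity (\ref{L(0)-conversion}) from the proof of Theorem \ref{main1}, namely $L^g(0) = \frac{1}{k}L^\sigma(0) + \frac{(k^2-1)c}{24k}$, I would compute
\begin{equation*}
-\frac{kc}{24} + L^g(0) \; = \; -\frac{kc}{24} + \frac{(k^2-1)c}{24k} + \frac{1}{k} L^\sigma(0) \; = \; \frac{1}{k}\left( -\frac{c}{24} + L^\sigma(0) \right),
\end{equation*}
so that $q^{-kc/24 + L^g(0)} = (q^{1/k})^{-c/24 + L^\sigma(0)}$ as formal operators.

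Finally, since $T_g^k(M_\sigma) = M_\sigma$ as underlying vector spaces, the trace is literally the same sum on each side, yielding
\begin{equation*}
tr_{T_g^k(M_\sigma)} \, q^{-kc/24 + L^g(0)} \; = \; tr_{M_\sigma} \, (q^{1/k})^{-c/24 + L^\sigma(0)} \; = \; \mathrm{dim}_{q^{1/k}} M_\sigma .
\end{equation*}
The only point one might want to check is that the $L^\sigma(0)$-eigenspace decomposition of $M_\sigma$ translates into the $L^g(0)$-eigenspace decomposition of $T_g^k(M_\sigma)$ in a grade-by-grade compatible way, but this is immediate from the operator identity displayed above (the eigenvalue $\lambda$ of $L^\sigma(0)$ corresponds to the eigenvalue $\lambda/k + (k^2-1)c/(24k)$ of $L^g(0)$ on the same vector). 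There is no substantial obstacle here; the corollary is essentially a bookkeeping consequence of (\ref{L(0)-conversion}) combined with the fact that $T_g^k$ leaves the underlying vector space unchanged.
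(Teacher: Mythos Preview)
Your proof is correct and follows exactly the approach the paper intends: the corollary is stated in the paper as an immediate consequence of the relation (\ref{L(0)-conversion}), $L^g(0)=\frac{1}{k}L^\sigma(0)+\frac{(k^2-1)c}{24k}$, together with the category equivalence from Theorem \ref{main1}/Corollary \ref{c3.10}, and you have simply written out the short exponent computation that the paper leaves implicit. There is nothing to add.
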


\end{document}